\newif\ifPDF
\numberwithin{equation}{section}
\theoremstyle{plain}
\newtheorem{proposition}{Proposition}[section]
\newtheorem{theorem}[proposition]{Theorem}		
\newtheorem{corollary}[proposition]{Corollary}
\newtheorem{lemma}[proposition]{Lemma}
\theoremstyle{definition}
\newtheorem{definition}[proposition]{Definition}
\newtheorem{remark}[proposition]{Remark}
\newcommand{\C}{\mathbb C}
\newcommand{\ZBbb}{\mathbb Z}
\newcommand{\Ccal}{{\mathcal C}}
\newcommand{\YM}{\mathop{\rm YM}\nolimits}
\newcommand{\YMH}{\mathop{\rm YMH}\nolimits}
\newcommand{\pr}{{\rm pr}}
\newcommand{\End}{\mathop{\rm End}\nolimits}
\newcommand{\vol}{\mathop{\rm vol}\nolimits}
\newcommand{\tr}{\mathop{\rm Tr}\nolimits}
\newcommand{\lra}{\longrightarrow}
\newcommand{\G}{\mathcal G}
\newcommand{\B}{\mathcal B}
\newcommand{\A}{\mathcal A}
\newcommand{\K}{\mathcal K}
\newcommand{\M}{{\mathfrak M}}
\newcommand{\Jac}{J}
\newcommand{\Bcal}{\mathcal{B}}
\newcommand{\Scal}{\mathcal{S}}
\newcommand{\Hcal}{\mathcal{H}}
\newcommand{\grad}{\nabla}
\newcommand{\Ia}{{\bf I}_a}
\newcommand{\Ib}{{\bf I}_b}
\newcommand{\IIminus}{{\bf II}^-}
\newcommand{\IIplus}{{\bf II}^+}
\newcommand{\doubleslash}{\bigr/ \negthinspace\negthinspace \bigr/}
\DeclareMathOperator{\id}{id}
\DeclareMathOperator{\rank}{rk}
\begin{document}


\title[Morse theory and stable pairs]
	{Morse theory and stable pairs}

\author[Wentworth]{Richard A.\ Wentworth}
\address{
Department of Mathematics \\
   Johns Hopkins University \\
   Baltimore, MD 21218 \\
and }
\address{
Department of Mathematics,
   University of Maryland,
   College Park, MD 20742}
\email{raw@umd.edu}
\thanks{R.W. supported in part by NSF grant DMS-0805797}

\author[Wilkin]{Graeme Wilkin}

\address{Department of Mathematics \\
		 University of Colorado \\
		Boulder, CO 80309}
\email{graeme.wilkin@colorado.edu}



\subjclass[2010]{Primary: 58E15 ; Secondary: 14D20, 53D20}
\date{\today}

\begin{abstract} 
We study the Morse theory of the Yang-Mills-Higgs functional on the space of pairs
 $(A,\Phi)$, 
 where $A$ is a unitary connection on a rank 2 hermitian vector bundle over a compact Riemann
surface,  and $\Phi$ is a holomorphic section of $(E, d_A'')$.
We prove that  a certain explicitly defined substratification 
of the Morse stratification 
 is perfect in the sense of $\G$-equivariant cohomology, 
where $\G$ denotes the unitary gauge group. 
 As a consequence, 
Kirwan surjectivity holds for  pairs.  
It also follows that the twist embedding into higher degree 
 induces a surjection on equivariant
 cohomology. 
 This may be interpreted
 as a rank 2 version of the analogous statement for symmetric products of Riemann surfaces.
Finally,
we compute the $\G$-equivariant Poincar\'e polynomial of the space of $\tau$-semistable pairs. 
In particular, we recover an earlier result of Thaddeus. The analysis
provides an interpretation of the Thaddeus flips in terms of a 
variation of Morse functions.
\end{abstract}



\maketitle

\thispagestyle{empty}

\setcounter{tocdepth}{2}

\tableofcontents

\baselineskip=16pt
\setcounter{footnote}{0}
\newpage


\section{Introduction}  In this paper we revisit the notion of a stable pair on 
a Riemann surface.  We introduce new techniques for the  computation of the 
equivariant cohomology of moduli spaces.  The main ingredient is a 
 version of Morse theory
 in the spirit of Atiyah and Bott \cite{AtiyahBott83} adapted to the \emph{singular}
 infinite dimensional space of holomorphic pairs.

Recall first the basic idea.  Let $E$ be a hermitian vector bundle  over a closed Riemann surface $M$ of genus $g\geq 2$.  
The space $\A(E)$ of  unitary connections on $E$ is an infinite dimensional affine space with an action of 
the group $\G$ of unitary gauge transformations.  Via the Chern connection there is an isomorphism 
$A\mapsto d_A''$ between $\A(E)$ and the space of (integrable) Dolbeault operators (i.e.\ holomorphic structures) on $E$.  One 
of the key observations of Atiyah-Bott is that the Morse theory of a suitable $\G$-invariant functional on $\A(E)$, namely the Yang-Mills 
functional, gives rise to a smooth stratification (see also \cite{Daskal92}). Moreover, this stratification is $\G$-equivariantly perfect 
in the sense that the long exact sequences for the equivariant cohomology of successive pairs split. 
 Since $\A(E)$ is contractible, this gives an effective method, inductive on the rank of $E$, 
for computing the equivariant cohomology of the minimum, which consists of projectively flat connections.

Consider now a configuration space $\B(E)$ consisting of pairs $(A, \Phi)$, where $A\in\A(E)$ and $\Phi$ is a 
section of a vector bundle associated to $E$.
  We impose the condition that $\Phi$ be $d_A''$-holomorphic.  Note that $\B(E)$ is 
still contractible, since an equivariant  retraction of $\B(E)$ to $\A(E)$ is given by simply scaling $\Phi$.  It is 
therefore reasonable to attempt an inductive computation of equivariant cohomology as above.  A problem arises, however, 
from the singularities caused by  jumps in  the dimension of the kernel as $A$ varies. 
 Nevertheless, the methods  introduced in \cite{DWWW}  for the case of 
Higgs bundles demonstrate that in certain cases  
this difficulty can be managed.

Below we apply this approach to the moduli space of rank $2$, degree $d$, $\tau$-semistable pairs
 $\M_{\tau,d}=\B^\tau_{ss}(E)\doubleslash\G^\C$
 introduced by Bradlow \cite{Bradlow91} and Bradlow-Daskalopoulos \cite{BradlowDaskal91}.  In 
this case, $\Phi$ is holomorphic
 section of $E$, and the Yang-Mills functional $\YM(A)$  is replaced by the Yang-Mills-Higgs functional $\YMH(A,\Phi)$. We 
give a description of the algebraic and  Morse theoretic stratifications of $\B(E)$.  
 These stratifications, as well as the moduli space, depend on a real parameter $\tau$, and since $\M_{\tau,d}$ is nonempty only for $d/2<\tau<d$, 
we shall always assume this bound for $\tau$. 
 For generic $\tau$, $\G$ acts freely, and the quotient is geometric.

We will see that, as in \cite{Daskal92, DaskalWentworth04, DWWW}, the algebraic and Morse stratifications agree (see Theorem \ref{thm:morse_hn}).
Because of singularities, however, the Morse stratification actually  fails to be perfect
 in this case.  We identify
precisely how this comes about, and in fact we will show
that  this ``failure of perfection" exactly cancels between
different strata, so that there is a substratification that is indeed perfect (see Theorem
\ref{thm:perfect}).  We formulate this result as

\begin{theorem}[Equivariantly perfect stratification] \label{thm:perfection}
For every $\tau$, $d/2<\tau<d$, there is a $\G$-invariant stratification of $\mathcal{B}(E)$  
defined via  the Yang-Mills-Higgs flow that is perfect in $\G$-equivariant cohomology.
\end{theorem}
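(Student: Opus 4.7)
The plan is to follow the equivariant Morse-theoretic template of Atiyah--Bott, refined to handle the singularities of $\B(E)$ along the lines of \cite{DWWW}. The starting point is the Morse stratification of $\B(E)$ coming from the negative gradient flow of $\YMH$. By Theorem \ref{thm:morse_hn}, this agrees with the algebraic (HN-type) stratification, giving an explicit index set: strata are labelled by the degree of the $\tau$-destabilising line subbundle and by whether the section $\Phi$ lies in that subbundle. The perfection criterion for a $\G$-invariant stratification is that, for each stratum $S_\alpha$ with critical set $C_\alpha$, the Thom--Gysin long exact sequence of the pair $(U_\alpha, U_\alpha \setminus S_\alpha)$ splits into short exact sequences in $\G$-equivariant cohomology; this is what must be verified for a suitable refinement of the Morse stratification.

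I would first dispose of the ``regular'' strata, those on which the negative Hessian of $\YMH$ has a normal bundle of locally constant rank along $C_\alpha$. For these the equivariant Thom isomorphism applies, and perfection (as in \cite{AtiyahBott83}) reduces to showing that the equivariant Euler class of the negative normal bundle is not a zero divisor in $H^*_\G(C_\alpha)$. Since each such $C_\alpha$ fibres over a product of a bundle moduli space and a symmetric product of $M$, one can compute $H^*_\G(C_\alpha)$ explicitly and verify the non-zero-divisor property by a weight argument for the stabiliser torus acting on the negative normal bundle, exactly in the Atiyah--Bott spirit.

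The real difficulty lies in the ``singular'' strata, where the dimension of the negative eigenspace of the Hessian jumps along $C_\alpha$ because the linearisation of the holomorphicity condition $d_A''\Phi = 0$ has jumping kernel. On these strata the negative normal cone is not a vector bundle, the Thom isomorphism fails, and extra classes appear in cohomology that obstruct perfection of the coarse Morse stratification. The remedy, following \cite{DWWW}, is to substratify each such stratum by the kernel-jump data (equivalently, by $h^0$ of an appropriate line subsheaf). The central technical point, and the main obstacle in the proof, is then twofold: (i) on each refined substratum the negative normal cone becomes locally trivial, so a Thom isomorphism of the expected codimension holds and produces a well-defined $\G$-equivariant Thom--Gysin sequence; and (ii) when one sums the resulting contributions over all substrata, the excess terms arising from neighbouring substrata pair up with opposite signs and cancel exactly, restoring the predicted Morse--Kirwan equality. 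Step (i) is established by identifying the negative eigenspaces with cohomology of explicit sheaf complexes on $M$ whose ranks are controlled by upper semicontinuity, while step (ii) is reduced to an identity between equivariant Poincar\'e series of symmetric products and lower-rank moduli spaces, proved by induction on the numerical invariants. Once both are in place the refined Thom--Gysin sequences split term by term, which is exactly the $\G$-equivariant perfection of the substratification asserted in the theorem.
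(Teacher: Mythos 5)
Your framework is right up to the singular strata: the identification of the Morse and Harder--Narasimhan stratifications, and the Atiyah--Bott splitting criterion for strata whose negative normal spaces are genuine vector bundles of constant rank (this is Lemma \ref{lem:claim}, where injectivity of $H^*_\G(\nu_\delta,\nu'_\delta)\to H^*_\G(\eta_\delta)$ follows from the circle subgroup fixing $\eta_\delta$ and acting freely on $\nu_\delta\setminus\eta_\delta$ --- the same non-zero-divisor mechanism you describe). But your treatment of the singular strata goes in the wrong direction. You propose to \emph{refine} those strata by the kernel-jump data ($h^0$ of the relevant line bundle) so that the negative normal cone becomes a bundle on each piece. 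The resulting pieces are Brill--Noether-type loci inside $\A_{j(\delta)}$ (or $\A_{ss}$), whose equivariant cohomology is not known --- the paper says so explicitly in Section \ref{sec:low_degree} --- and the normal ``cone'' is not merely of jumping rank but genuinely nonlinear, so constancy of fiber dimension on a refined piece does not by itself yield a Thom isomorphism. Your step (ii), the exact cancellation of excess terms, is asserted rather than proved: there is no Poincar\'e-series identity to induct on until one knows the contributions of the refined pieces, which is precisely what is unavailable. The paper's device is the opposite of refinement: it \emph{merges} each problematic pair of strata $\A_{j(\delta)}$ and $\B_\delta$ (for $\delta\in\Delta^+_{\tau,d}\cap I_{\tau,d}$) into a single stratum $X_\delta\setminus X_{\delta_1}$, and proves splitting for the merged stratum via the triple $(X_\delta,X_\delta',X_\delta'')$ with $X_\delta''=X_\delta\setminus\pr^{-1}(\A_{j(\delta)})$: after excisions and the scaling retraction $\Phi\mapsto t\Phi$, the pair $(X_\delta,X_\delta'')$ is identified with an Atiyah--Bott pair $(X^\A_{j(\delta)},X^\A_{j(\delta)-1})$ and the pair $(X_{\delta_1},X_\delta'')$ with a lowest-stratum pair for a shifted parameter $\tau'=j(\delta)-\varepsilon$; the injectivity and surjectivity of the resulting comparison maps force the long exact sequence for $(X_\delta,X_{\delta_1})$ to split. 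That merging is what the word ``substratification'' means in the statement.

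A second, independent gap: even with the correct strategy, the direct Morse--Bott argument at the minimal Yang--Mills stratum $\A_{ss}$ (critical points with $\Phi\equiv 0$ and $E$ semistable) requires $H^1(E)=0$, hence $d>4g-4$. For $1\le d\le 4g-4$ no retraction onto the negative normal directions is available, and the paper closes this case by an induction on degree using the twist embedding $\B(E)\hookrightarrow\B(E\otimes{\mathcal O}(p))$ combined with a variation of $\tau$ (Lemmas \ref{lem:maxtau}--\ref{lem:anytau}). Your proposal contains no mechanism for this; any argument relying on the negative directions over $\A_{ss}$ forming a bundle, or a locally trivial cone over computable substrata, fails there for the same Brill--Noether reason.
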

 
The fact that perfection fails for the Morse stratification but holds for a substratification seems to be a new phenomenon.
In any case, as with vector bundles, 
Theorem \ref{thm:perfection} allows us  to compute the $\G$-equivariant cohomology of 
 the open stratum
$\Bcal^\tau_{ss}(E)$. Explicit formulas in terms of symmetric products of $M$ are given in  
Theorems \ref{thm:poincare-poly-generic} and \ref{thm:poincare-poly-non-generic}.

There is a natural map (called the \emph{Kirwan map})  from the cohomology of the classifying space $B\G$ of $\G$ to the equivariant cohomology of the stratum of $\tau$-semistable pairs $\Bcal^\tau_{ss}(E)\subset\Bcal(E)$, coming from inclusion (see \cite{Kirwan84}).  One of the consequences of the work of Atiyah-Bott is that the analogous map is surjective for the case of semistable bundles.  
The same is true for pairs: 
 \begin{theorem}[Kirwan surjectivity] \label{thm:kirwan}
The Kirwan map $H^\ast(B\G)\to H^\ast_{\G}(\Bcal^\tau_{ss}(E))$ is surjective.  In particular, for generic $\tau$, 
$H^\ast(B\G)\to H^\ast(\M_{\tau,d})$ is surjective.
\end{theorem}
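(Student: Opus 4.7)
The plan is to derive Theorem \ref{thm:kirwan} as a formal consequence of Theorem \ref{thm:perfection}, exactly mirroring the Atiyah--Bott argument for holomorphic bundles. The key input I would use is that $\Bcal^\tau_{ss}(E)$ appears as the open stratum in the equivariantly perfect stratification of $\Bcal(E)$ produced there; this identification is the point that requires some care, but it should follow from the description of the Morse/algebraic strata in Theorem \ref{thm:morse_hn} together with the explicit construction of the substratification preceding Theorem \ref{thm:perfect}.

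First, I would invoke the standard fact that an equivariantly perfect stratification causes each Thom--Gysin long exact sequence in $\G$-equivariant cohomology to split into short exact sequences. Summing over the higher strata, classes supported on the complement of the open stratum inject into $H^\ast_\G(\Bcal(E))$, and consequently the restriction homomorphism
$$H^\ast_\G(\Bcal(E)) \longrightarrow H^\ast_\G(\Bcal^\tau_{ss}(E))$$
is surjective.

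Next, I would identify the source with $H^\ast(B\G)$. As noted in the introduction, the scaling map $(A,\Phi) \mapsto (A, t\Phi)$ is $\G$-equivariant and sends $\Bcal(E)$ to the subspace $\{\Phi = 0\}$, which is naturally $\A(E)$; since scaling preserves the holomorphicity condition $d_A''\Phi = 0$, this gives a genuine $\G$-equivariant deformation retraction of $\Bcal(E)$ onto $\A(E)$. The affine space $\A(E)$ is contractible, so $\Bcal(E)$ is $\G$-equivariantly contractible and hence $H^\ast_\G(\Bcal(E)) \cong H^\ast(B\G)$. Composing with the surjection from the previous step yields the Kirwan surjection.

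For the second assertion, when $\tau$ is generic the introduction has already observed that $\G$ acts freely on $\Bcal^\tau_{ss}(E)$, so $\Bcal^\tau_{ss}(E) \to \M_{\tau,d}$ is a principal $\G$-bundle and the Borel construction computes the ordinary cohomology of $\M_{\tau,d}$; surjectivity onto $H^\ast(\M_{\tau,d})$ follows at once. I do not anticipate any essentially new obstacle in this argument: the substantive content is absorbed into Theorem \ref{thm:perfection}. The only mild point to be checked is the claim, made above, that $\Bcal^\tau_{ss}(E)$ is precisely the open stratum of the perfect substratification, which I expect to be verified by direct inspection of the stratification's definition.
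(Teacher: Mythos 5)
Your proposal is correct and matches the paper's argument: Theorem \ref{thm:kirwan} is deduced exactly as a formal consequence of Theorem \ref{thm:perfect} (the splitting of \eqref{eqn:prime} gives surjectivity of each restriction $H^\ast_\G(X_\delta)\to H^\ast_\G(X_{\delta_1})$, hence of $H^\ast_\G(\B)\to H^\ast_\G(X_0)=H^\ast_\G(\B^\tau_{ss})$), combined with the equivariant contractibility of $\B$ via scaling $\Phi$ and, for generic $\tau$, the freeness of the $\G$-action. The paper states this as "Kirwan surjectivity follows immediately" after proving Theorem \ref{thm:perfect}, and your write-up simply makes that deduction explicit.
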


As noted above, for noninteger values of $\tau$, $d/2<\tau<d$, $\M_{\tau,d}$ is 
a smooth projective algebraic manifold of dimension $d+2g-2$, and the equivariant cohomology of 
$\B^\tau_{ss}(E)$ is identical to the ordinary cohomology of $\M_{\tau,d}$.  The computation of equivariant 
cohomology presented here then recovers the result of Thaddeus in \cite{Thaddeus94}, who computed the cohomology  using different methods.  Namely, 
he gives an explicit description of  the 
modifications, or ``flips", in $\M_{\tau,d}$  as the parameter $\tau$ varies.
  At integer values there is a change in stability conditions.  Below, we show how the change in cohomology arising 
from a flip may be reinterpreted as a variation of the Morse function.  This is perhaps not surprising in view of the 
construction in \cite{BradlowDaskalWentworth96}.  However, here we work directly on the infinite dimensional space.  The 
basic idea is that there is a one parameter choice of Morse functions $f_\tau$ on $\Bcal$.  The minimum $f_\tau^{-1}(0)/\G\simeq 
\M_{\tau,d}$, and the cohomology of $\M_{\tau,d}$ may, in principle, be computed from the cohomology of the higher critical sets.  
As $\tau$ varies past certain critical values, new critical sets are created while others merge.
Moreover, indices of critical sets can jump.  All this taken together accounts
for the change in topology of the minimum.
 
 There are several important points in this interpretation.  One is that the subvarieties
 responsible for the 
change in cohomology observed by Thaddeus as the parameter 
varies are somehow directly built into the Morse 
theory, even for  a fixed $\tau$, in the guise of higher critical sets. 
This example also exhibits computations at critical strata that can be carried out in
 the presence of singular normal \emph{cusps}, as 
opposed to the singular normal vector bundles in \cite{DWWW}.
 These ideas may be useful for computations in higher rank or for other moduli spaces.   
 
 The critical set corresponding to minimal Yang-Mills connections, regarded as a subset of $\B(E)$ by 
setting $\Phi\equiv 0$, is special from the point of view of the Morse theory.  In particular, essentially because of issues regarding 
Brill-Noether loci in the moduli space of vector bundles,  we can only directly prove the perfection of the 
stratification   at this step, and the crucial Morse-Bott lemma (Theorem \ref{thm:bott}), 
for $d>4g-4$.  This we do 
in Section \ref{sec:bott}.  By contrast,
for the other critical strata there is no such requirement on the degree.
 Using this fact, we then give an inductive argument  by twisting $E$ by a positive line bundle and embedding $\B(E)$ into the space of 
pairs for higher degree, thus indirectly concluding the splitting of the associated long exact sequence even at minimal Yang-Mills connections 
in low degree (see Section \ref{sec:low_degree}).

This line of reasoning leads to  another interesting consequence.  For $\tau$ close to $d/2$, there is a surjective  holomorphic map from $\M_{\tau,d}$ to
the moduli space of semistable rank 2 bundles of degree $d$.   This is  the rank 2 version of the Abel-Jacobi map 
\cite{BradlowDaskal91}.  In this sense, $\M_{\tau,d}$ is a generalization of the  $d$-th symmetric product $S^d M$ 
of $M$.  Choosing an effective divisor on $M$ of degree $k$, 
there is a natural inclusion $S^d M\hookrightarrow S^{d+k}M$, and it was shown by MacDonald in $(14.3)$ of \cite{MacDonald62} that this inclusion induces a surjection on rational 
cohomology.  A similar construction works for rank 2 pairs, except now $d\mapsto d+2k$, while there is also a shift 
in the parameter $\tau\mapsto {\tau+k}$.  We will prove  the following

\begin{theorem}[Embedding in higher degree] \label{thm:macdonald}
Let $\deg E=d$ and $\deg \widetilde E=d+2k$.  Then for all
 $d/2<\tau<d$,
the inclusion  $\B^\tau_{ss}(E)\hookrightarrow \B^{\tau+k}_{ss}(\widetilde E)$ described above
 induces a surjection on rational $\G$-equivariant cohomology.  In particular, for generic $\tau$, the inclusion $\M_{\tau,d}\hookrightarrow \M_{\tau+k, d+2k}$ induces a surjection on rational cohomology. 
\end{theorem}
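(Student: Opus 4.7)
The plan is to deduce the theorem from Kirwan surjectivity (Theorem~\ref{thm:kirwan}) by fitting the Kirwan maps for $E$ and for $\widetilde E$ into a commutative square induced by tensoring with a line bundle.

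First I would set up the inclusion explicitly. Fix an effective divisor $D$ of degree $k$, set $L = \mathcal O_M(D)$ with a Chern connection $A_L$, let $s \in H^0(M,L)$ be the defining section, and fix an isometric isomorphism $\widetilde E \cong E \otimes L$. Then
\[
i \colon \B(E) \to \B(\widetilde E), \qquad (A,\Phi) \mapsto (A \otimes A_L,\; \Phi \otimes s)
\]
is $\iota$-equivariant for the homomorphism $\iota \colon \G(E) \to \G(\widetilde E)$, $g \mapsto g \otimes \id_L$, which is in fact a group isomorphism (inverse given by tensoring with $\id_{L^{-1}}$). Using that $F \mapsto F \otimes L$ is a bijection on line subbundles shifting degrees by $k$, and that $\Phi \in H^0(M,F)$ if and only if $\Phi \otimes s \in H^0(M, F \otimes L)$, Bradlow's $\tau$-semistability for $(A,\Phi)$ matches exactly $(\tau+k)$-semistability for $i(A,\Phi)$. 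Passing to Borel constructions yields the commutative square
\[
\begin{CD}
H^*(B\G(\widetilde E);\QBbb) @>{\kappa_{\widetilde E}}>> H^*_{\G(\widetilde E)}\bigl(\B^{\tau+k}_{ss}(\widetilde E)\bigr) \\
@V{(B\iota)^*}VV @VV{i^*}V \\
H^*(B\G(E);\QBbb) @>{\kappa_E}>> H^*_{\G(E)}\bigl(\B^\tau_{ss}(E)\bigr),
\end{CD}
\]
in which $\kappa$ denotes the Kirwan map.

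The key calculation is that $(B\iota)^*$ is surjective. The universal bundle $\widetilde{\mathbb E}$ on $M \times B\G(\widetilde E)$ pulls back along $B\iota$ to $\mathbb E \otimes \pi_M^*L$, whence
\[
c_1(\mathbb E \otimes L) = c_1(\mathbb E) + 2\pi_M^*c_1(L), \qquad c_2(\mathbb E \otimes L) = c_2(\mathbb E) + c_1(\mathbb E)\,\pi_M^*c_1(L),
\]
the quadratic term vanishing since $\pi_M^*c_1(L)^2 \in H^4(M) = 0$. By Atiyah--Bott, $H^*(B\G(E);\QBbb)$ is freely generated by the K\"unneth components $\xi_i,\psi_{i,j},\zeta_i$ of $c_i(\mathbb E)$, and the displayed formulas show that $(B\iota)^*$ sends $\tilde\xi_i \mapsto \xi_i$, $\tilde\psi_{i,j} \mapsto \psi_{i,j}$, and $\tilde\zeta_2 \mapsto \zeta_2 + k\xi_1$; this is a unitriangular change of generators, hence $(B\iota)^*$ is an isomorphism.

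Combining with Kirwan surjectivity, $i^* \circ \kappa_{\widetilde E} = \kappa_E \circ (B\iota)^*$ is surjective, so $i^*$ is surjective. The moduli-space statement for generic $\tau$ follows since $\G^{\C}$ acts freely there, making equivariant cohomology agree with ordinary cohomology of $\M_{\tau,d}$. The only genuinely technical point is the stability comparison in the first step: verifying that the bijection $F \leftrightarrow F \otimes L$ accounts for \emph{all} line subbundles of $\widetilde E$ and that Bradlow's two alternatives translate correctly. Once this is done, the argument is a diagram chase combining Atiyah--Bott with the already established Kirwan surjectivity.
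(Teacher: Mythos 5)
Your argument is correct, and it is essentially the route that the paper's own proof opens with in one sentence (``this follows from Kirwan surjectivity'') but then declines to carry out; the paper instead proves a stronger statement by a different mechanism. Concretely, the paper reduces to $k=1$ and runs a downward induction over $\Delta_{\tau,d}$: since Theorem \ref{thm:perfect} makes \emph{both} rows of diagram \eqref{eqn:embedding} short exact (for $E$ and for $\widetilde E$), surjectivity of $H^*_\G(\widetilde X_\delta)\to H^*_\G(X_\delta)$ propagates to the predecessor $\delta_1$, and hence all the way down to $\delta=0$. What that buys is surjectivity on \emph{every} stratum $X_\delta$, not only on $\B^\tau_{ss}$. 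What your route buys is brevity and independence from the stratification: once you observe that $\iota$ is an isomorphism of gauge groups, $(B\iota)^*$ is an isomorphism for free because $B\iota$ is a homotopy equivalence --- so your Chern-class computation, while a reasonable sanity check on the identification of generators, is logically superfluous --- and the factorization $i^*\circ\kappa_{\widetilde E}=\kappa_E\circ(B\iota)^*$ together with Theorem \ref{thm:kirwan} finishes the proof with no reduction to $k=1$. Two points worth making explicit: (i) there is no circularity, since Theorem \ref{thm:kirwan} is proved in Section \ref{sec:low_degree} using only the special twist-embedding Lemmas \ref{lem:maxtau}--\ref{lem:anytau}, which are established directly and do not rely on Theorem \ref{thm:macdonald}; and (ii) the ``technical point'' you flag, that $i$ carries $\B^\tau_{ss}(E)$ into $\B^{\tau+k}_{ss}(\widetilde E)$, is exactly the compatibility with the Harder--Narasimhan stratification recorded at the start of Section \ref{sec:low_degree} (all of $\mu_+$, $\deg\Phi$ and $\mu_-$ shift by $k$ under twisting), so your square does commute and the diagram chase is valid.
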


\begin{remark}
It is also possible to construct a moduli space of pairs for which the isomorphism class of $\det E$ is fixed, indeed this is the space studied by Thaddeus in \cite{Thaddeus94}. The explicit calculations in this paper are all done for the non-fixed determinant case, however it is worth pointing out here that the idea is essentially the same for the fixed determinant case, and that the only major difference between the two cases is in the topology of the critical sets. In particular, the indexing set $\Delta_{\tau,d}$ for the stratification is the same in both cases.
\end{remark}

\medskip

\noindent
\emph{Acknowledgements.}  Thanks to George Daskalopoulos for many discussions.  R.W. is also grateful for the hospitality at  the MPIM-Bonn, where some of the work on this paper was completed.

\section{Stable pairs} \label{sec:pairs}

\subsection{The Harder-Narasimhan stratification}\label{subsec:Harder-Narasimhan}
Throughout this paper, $E$ will denote a rank $2$ hermitian vector bundle on $M$ of positive degree $d=\deg E$.  We will  regard $E$ as a smooth complex vector bundle, and when endowed with a holomorphic structure that is understood, we will use the same notation for the holomorphic bundle.

Recall that a holomorphic bundle $E$ of degree $d$ is \emph{stable} (resp.\ \emph{semistable}) if $\deg L< d/2$ (resp.\ $\deg L\leq d/2$) for all holomorphic line subbundles $L\subset E$.

\begin{definition} \label{def:slopes}
For a stable holomorphic bundle $E$, set $\mu_+(E)=d/2$.
For  $E$ unstable, let 
$$\mu_+(E)=\sup\{\deg L : L\subset E\text{ a holomorphic line subbundle}\}$$
For a holomorphic section $\Phi\not \equiv 0$   of $E$, define $\deg\Phi$ to be the number of zeros of $\Phi$, counted with multiplicity. Finally, for a holomorphic pair $(E,\Phi)$ let
$$
\mu_-(E,\Phi)=\begin{cases}  d-\deg \Phi & \Phi\not\equiv 0 \\  d-\mu_+(E) & \Phi\equiv 0\end{cases}
$$
\end{definition}

\begin{definition}[\cite{Bradlow91}] \label{def:taustable}
Given $\tau$, a holomorphic pair $(E,\Phi)$ is called \emph{$\tau$-stable} (resp.\ \emph{$\tau$-semistable}) if
$$
\mu_+(E)<\tau<\mu_-(E,\Phi)\qquad (\text{resp.\ } \mu_+(E)\leq\tau\leq\mu_-(E,\Phi))
$$
\end{definition}

As with holomorphic bundles, 
there is a notion of $s$-equivalence of strictly semistable objects.  
The set $\M_{\tau,d}$ of isomorphism classes of semistable pairs, 
modulo $s$-equivalence, has the structure of a projective variety. 
Note that $\M_{\tau,d}$ is empty if $\tau\not\in [d/2, d]$. 
 For non-integer values of $\tau\in (d/2, d)$, semistable is equivalent to stable, and
$\M_{\tau,d}$ is smooth. 

Let $\A=\A(E)$ denote the infinite dimensional affine space of holomorphic structures on $E$, $\G$ the group of unitary gauge transformations, and $\G^\C$ its complexification.
The space $\A$ may be identified with Dolbeault operators $A\mapsto d_A'': \Omega^0(E)\to \Omega^{0,1}(E)$, with the inverse of $d_A''$ given by the Chern connection with respect to the fixed hermitian structure.  When we want to emphasize the holomorphic bundle, we write $(E,d_A'')$.
\begin{equation}
\B =\B(E)= \left\{ (A, \Phi) \in \mathcal{A} \times \Omega^0(E) \, : \, d_A'' \Phi = 0 \right\}
\end{equation}
Let 
$$\B^{\tau}_{ss}=\left\{ (A,\Phi)\in \B : ((E,d_A''), \Phi) \text{ is $\tau$-semistable}\right\}
$$
 Then $\M_{\tau,d}=\B^{\tau}_{ss}\doubleslash\G^\C$, where the double slash identifies $s$-equivalent orbits.  For generic values of $\tau$, semistability implies stability and $\G$ acts freely, and so this is a geometric quotient.

We now describe the stratification of $\B$ associated to the Harder-Narasimhan filtration, which  has an important  relationship to the Morse theory picture that will be discussed below in Section \ref{sec:gradient}. 
 In the case of rank 2 bundles, this stratification is particularly easy to describe. For convenience, 
throughout this section we fix a generic $\tau$, $d/2<\tau<d$ (it suffices to assume $4\tau \not\in\ZBbb$). 
 Genericity  is used only to give a simple description of the strata in terms of $\delta$.  The extension to special values of $\tau$  is straightforward (see Remark \ref{rem:nongeneric}).

Note that stability of the pair fails if either of the inequalities in Definition 
\ref{def:taustable} fails.  The two inequalities are not quite independent, but 
there are some cases where only one fails and others where   both fail. If the latter,
 it seems natural to filter by the \emph{most destabilizing of the two}.
With this in mind, we make the following

\begin{definition} \label{def:defect}
For a holomorphic pair $(E,\Phi)$, let
$$
\delta(E,\Phi)=\max\left\{ \tau-\mu_-(E,\Phi), \mu_+(E)-\tau, 0\right\}
$$
\end{definition}

Note that $\delta$ takes on a discrete and infinite set of nonnegative real values, and is 
upper semicontinuous, since both $\mu_+$  and $-\mu_-$ are (observe that $\deg\Phi\leq \mu_+(E)$).  
We denote the ordered set of such $\delta$ by $\Delta_{\tau,d}$.   Clearly, $\delta$  
is an integer modulo $\pm \tau$, or $\delta =\tau-d/2$.  Because of 
the genericity of $\tau$, the former two possibilities are mutually exclusive:
\begin{lemma}
There is a disjoint union $\Delta_{\tau,d}\setminus\{0\}=\Delta_{\tau,d}^+\cup \Delta_{\tau,d}^-$, with 
\begin{align*}
\delta\in \Delta_{\tau,d}^+ &\iff \delta=\tau-\mu_-(E,\Phi)\, , \text{ for some pair } (E,\Phi) \\
\delta\in \Delta_{\tau,d}^- &\iff \delta=\mu_+(E)-\tau\, , \text{ for some pair } (E,\Phi)
\end{align*}
\end{lemma}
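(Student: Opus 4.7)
The plan is to track the arithmetic ``type'' of each value $\delta>0$ modulo $\mathbb Z$, determined by which of the two terms in the $\max$ defining $\delta(E,\Phi)$ attains it, and then use the genericity condition $4\tau\notin\mathbb Z$ to show that the two possible types cannot coincide.

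First I would observe that for $\delta(E,\Phi)>0$, the maximum in Definition \ref{def:defect} is realized by one of the two nontrivial summands, so $\Delta_{\tau,d}\setminus\{0\}=\Delta_{\tau,d}^+\cup\Delta_{\tau,d}^-$ as an (a priori not disjoint) union; this is immediate. The content is the disjointness.

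Next I would compute the possible residues mod $\mathbb Z$. For $\delta\in\Delta_{\tau,d}^+$ write $\delta=\tau-\mu_-(E,\Phi)$. When $\Phi\not\equiv 0$, $\mu_-(E,\Phi)=d-\deg\Phi\in\mathbb Z$, so $\delta\equiv\tau\pmod{\mathbb Z}$. When $\Phi\equiv 0$ and $E$ is unstable, $\mu_+(E)\in\mathbb Z$, so again $\delta\equiv\tau\pmod{\mathbb Z}$. The only remaining subcase is $\Phi\equiv 0$ with $E$ (semi)stable, giving $\mu_+(E)=d/2$ and hence $\delta=\tau-d/2$. For $\delta\in\Delta_{\tau,d}^-$ write $\delta=\mu_+(E)-\tau$; since $\tau>d/2$, the (semi)stable case yields $\delta\leq 0$ and hence contributes nothing, while in the unstable case $\mu_+(E)\in\mathbb Z$, so $\delta\equiv -\tau\pmod{\mathbb Z}$.

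Finally I would check disjointness. If some $\delta>0$ lay in both sets, then either $\tau\equiv-\tau\pmod{\mathbb Z}$, forcing $2\tau\in\mathbb Z$, or $\tau-d/2\equiv-\tau\pmod{\mathbb Z}$, forcing $2\tau-d/2\in\mathbb Z$ and hence (since $d\in\mathbb Z$) $4\tau\in\mathbb Z$. Both possibilities are excluded by the standing genericity assumption $4\tau\notin\mathbb Z$, completing the proof.

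The main (very modest) obstacle is just the careful bookkeeping in the $\Phi\equiv 0$ case, since there $\mu_-(E,\Phi)=d-\mu_+(E)$ introduces a half-integer residue $\tau-d/2$ rather than an integer shift of $\tau$; this is precisely the subcase that forces the stronger genericity condition $4\tau\notin\mathbb Z$ rather than the weaker $2\tau\notin\mathbb Z$.
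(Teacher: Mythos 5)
Your proof is correct and is essentially the argument the paper intends: the paper disposes of this lemma in one sentence by noting that $\delta$ is an integer modulo $\pm\tau$ or equals $\tau-d/2$, and that genericity ($4\tau\notin\mathbb Z$) makes these possibilities mutually exclusive, which is exactly the residue-mod-$\mathbb Z$ bookkeeping you carry out. Your case analysis (including the half-integer residue from the $\Phi\equiv 0$, $E$ semistable case, which is what forces $4\tau\notin\mathbb Z$ rather than just $2\tau\notin\mathbb Z$) fills in the details faithfully.
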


\begin{lemma} \label{lem:basic}
Suppose $(E,\Phi)\not \in \B^\tau_{ss}$, $\Phi\not\equiv 0$. Then
\begin{enumerate}
\item  if $\deg\Phi\geq d/2$, $\delta(E,\Phi)=\mu_+(E)-d+\tau$.
\item if $d-\tau\leq\deg\Phi<d/2$, $\delta(E,\Phi)=\deg\Phi-d+\tau$.
\item if $0\leq \deg\Phi<d-\tau$, $\delta(E,\Phi)=\mu_+(E)-\tau$.
\end{enumerate}
If $\Phi\equiv 0$, then $\delta(E,\Phi)=\mu_+(E)-d+\tau$.
\end{lemma}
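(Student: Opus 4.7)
The plan is to analyze $\delta(E,\Phi)=\max\{\tau-\mu_-(E,\Phi),\,\mu_+(E)-\tau,\,0\}$ by comparing its three terms case by case, based on a single structural dichotomy that controls the relationship between $\deg\Phi$ and $\mu_+(E)$. To establish the dichotomy, for $\Phi\not\equiv 0$ let $L_\Phi\subset E$ be the saturation of $\image\Phi$, a line subbundle of degree $\deg\Phi$, and let $L_{\max}\subset E$ be a maximal line subbundle, so $\deg L_{\max}=\mu_+(E)$. Consider the induced section $\bar\Phi$ of the line bundle $E/L_{\max}$, which has degree $d-\mu_+(E)$. Either $\bar\Phi\equiv 0$, in which case $\image\Phi\subset L_{\max}$ and the saturation $L_\Phi$ must coincide with $L_{\max}$ (a rank-one saturated subsheaf of a line bundle is the whole line bundle), giving $\deg\Phi=\mu_+(E)$; or $\bar\Phi\not\equiv 0$, in which case comparing zero divisors yields $\deg\Phi\leq\deg\bar\Phi=d-\mu_+(E)$. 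Thus either $\mu_+(E)=\deg\Phi$ or $\mu_+(E)\leq d-\deg\Phi$.

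With this dichotomy the three cases reduce to interval arithmetic. In case (1), $\deg\Phi\geq d/2$, the second branch would force $\mu_+(E)\leq d-\deg\Phi\leq d/2\leq\deg\Phi\leq\mu_+(E)$, collapsing to $\mu_+(E)=\deg\Phi$ either way. Then $\tau-\mu_-(E,\Phi)=\mu_+(E)-d+\tau$ exceeds $\mu_+(E)-\tau$ by $2\tau-d>0$ and is itself positive since $\mu_+(E)\geq d/2>d-\tau$, so $\delta=\mu_+(E)-d+\tau$. In case (2), $d-\tau\leq\deg\Phi<d/2$, the first branch of the dichotomy gives $\mu_+(E)=\deg\Phi<d/2<\tau$, while the second gives
\[
(\mu_+(E)-\tau)-(\tau-\mu_-(E,\Phi))\leq 2(d-\tau-\deg\Phi)\leq 0.
\]
Either way $\tau-\mu_-(E,\Phi)$ is the maximum, and the genericity $4\tau\notin\ZBbb$ rules out $\deg\Phi=d-\tau$, so this value is strictly positive: $\delta=\deg\Phi-d+\tau$. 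In case (3), $\deg\Phi<d-\tau$ makes $\tau-\mu_-(E,\Phi)<0$, and the instability hypothesis forces $\mu_+(E)>\tau$, giving $\delta=\mu_+(E)-\tau$.

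For $\Phi\equiv 0$ one substitutes $\mu_-(E,0)=d-\mu_+(E)$: the first term $\tau-\mu_-$ equals $\mu_+(E)-d+\tau$ and exceeds $\mu_+(E)-\tau$ by $2\tau-d>0$, and instability of $(E,0)$ is equivalent (since $\tau>d/2>d-\tau$) to $\mu_+(E)>d-\tau$, which is exactly the condition making this term positive, so $\delta=\mu_+(E)-d+\tau$. The main obstacle is the dichotomy itself; given it, everything reduces to careful bookkeeping. Case (2) is the only place where we must actively rule out the possibility that $\mu_+(E)-\tau$ dominates $\tau-\mu_-(E,\Phi)$, and the dichotomy is what handles both of its branches cleanly.
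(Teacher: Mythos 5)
Your proof is correct and follows essentially the same route as the paper: your dichotomy ($\mu_+(E)=\deg\Phi$ or $\mu_+(E)\leq d-\deg\Phi$) is exactly what the paper establishes in cases (1) and (2), the only cosmetic difference being that you compare line subbundles against the quotient $E/L_{\max}$ while the paper uses the extension $0\to L_\Phi\to E\to E/L_\Phi\to 0$. The remaining interval arithmetic matches the paper's (the paper invokes instability of $(E,\Phi)$ where you invoke genericity of $\tau$ in case (2), but both are valid under the standing hypotheses).
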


\begin{proof}
If $\deg\Phi\geq d/2$, then the line subbundle generated by $\Phi$ is the 
maximal destabilizing subbundle of $E$.  Hence, $\mu_+(E)=\deg\Phi$,
 $\mu_-(E,\Phi)=d-\mu_+(E)$, and so (1) follows from the fact that $\tau>d/2$.  
For (2), consider the extension $0\to L_1\to E\to L_2\to 0$, where $\Phi\in H^0(L_1)$.  
Then $\deg L_2=d-\deg\Phi$, so $\mu_+(E,\Phi)\leq d-\deg\Phi$. It follows 
that $\mu_+(E)-\tau\leq0$.  For part (3), $0\leq \deg\Phi<d-\tau$ implies 
$\tau<\mu_-(E,\Phi)$. 
The last statement is clear, since $\tau>d/2$ implies $\tau - \mu_-(E, \Phi) 
= \mu_+(E)-d+\tau>\mu_+(E)-\tau$.
\end{proof}

\begin{corollary}
$\Delta_{\tau,d}^-\subset(0,d-\tau]$.  
\end{corollary}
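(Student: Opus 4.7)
The plan is to pick any $\delta\in\Delta_{\tau,d}^-$, take a realizing pair $(E,\Phi)$ for which $\delta=\mu_+(E)-\tau$, and verify $\mu_+(E)\le d$; combined with the positivity $\delta>0$ built into the definition of $\Delta_{\tau,d}^-$, this gives $\delta\in(0,d-\tau]$.

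The first step is to identify which case of Lemma \ref{lem:basic} the realizing pair lies in. For $\Phi\equiv 0$ or for cases (1) and (2) of Lemma \ref{lem:basic}, the value $\delta(E,\Phi)$ belongs to the coset $\tau+\ZBbb$ (since $\mu_+(E)$, $\deg\Phi$ and $d$ are all integers), whereas $\mu_+(E)-\tau$ lies in $-\tau+\ZBbb$. The genericity assumption $4\tau\notin\ZBbb$ forces $2\tau\notin\ZBbb$, so these two cosets are disjoint. Hence the realizing pair must fall in case (3), and therefore $0\le\deg\Phi<d-\tau<d/2$.

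The remaining step is to bound $\mu_+(E)\le d$, which is precisely the extension argument already appearing in the proof of Lemma \ref{lem:basic}(2). Let $L_1\subset E$ be the line subbundle generated by $\Phi$, yielding an exact sequence $0\to L_1\to E\to L_2\to 0$ with $\deg L_1=\deg\Phi$ and $\deg L_2=d-\deg\Phi$. Any line subbundle $L\subset E$ either coincides with $L_1$ or projects nontrivially, hence injectively as a sheaf map, into $L_2$; in either case $\deg L\le\max\{\deg\Phi,\, d-\deg\Phi\}=d-\deg\Phi$. Taking the supremum gives $\mu_+(E)\le d-\deg\Phi\le d$, and so $\delta=\mu_+(E)-\tau\le d-\tau$.

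There is no substantive obstacle: the corollary is essentially bookkeeping on top of Lemma \ref{lem:basic}, with the content of the bound already extracted in the proof of part (2) and with the genericity of $\tau$ used only to select the relevant case.
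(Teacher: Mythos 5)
Your proof is correct. Its first half --- using the genericity of $\tau$ to rule out the case $\Phi\equiv 0$ and cases (1), (2) of Lemma \ref{lem:basic}, so that a realizing pair must fall in case (3) with $\Phi\not\equiv 0$ and $0\le\deg\Phi<d-\tau<d/2$ --- is exactly the (largely implicit) first step of the paper's argument, which simply says ``by (3) it must be that $E$ is unstable and $\Phi\not\equiv 0$.'' The second half differs in mechanism: the paper starts from the Harder--Narasimhan filtration $0\to L_2\to E\to L_1\to 0$ with $\deg L_2=\mu_+(E)$, observes that $\deg\Phi<\deg L_2$ forces the projection of $\Phi$ to $L_1$ to be nonzero, and concludes $\deg L_1=d-\mu_+(E)\ge 0$; you instead start from the line subbundle generated by $\Phi$ and bound every line subbundle of $E$ by the degree of the corresponding quotient, obtaining the marginally sharper estimate $\mu_+(E)\le d-\deg\Phi$. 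The two arguments exploit the same input --- a nonzero section of degree less than $d/2$ --- from opposite ends of the relevant extension: the paper descends from the maximal destabilizing subbundle and uses that a line bundle with a nonzero section has nonnegative degree, while you ascend from the $\Phi$-generated subbundle and use that a line subbundle distinct from it injects as a sheaf into the quotient. Both are elementary and complete; either way $\delta=\mu_+(E)-\tau\le d-\tau$ follows.
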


\begin{proof}
Indeed, if $(E,\Phi)$ is unstable and $\delta(E,\Phi)=\mu_+(E)-\tau$, then 
by (3) it must be that $E$ is unstable and $\Phi\not\equiv 0$.  
From the Harder-Narasimhan filtration (cf.\ \cite{Kobayashi87})
 $0\to L_2\to E\to L_1\to 0$, the projection of $\Phi$ to $L_1$ must also 
be nonzero, since $\deg \Phi<\deg L_2$.  Hence, $\deg L_1=d-\mu_+(E)\geq 0$, 
and so $d - \tau \geq \delta(E, \Phi)$.
\end{proof}

\begin{remark} \label{rem:gap}
If $\delta\in \Delta^+_{\tau,d}$ and $\delta< \tau-d/2$, then $\delta\leq \tau-d/2-1/2$.
  Indeed, if $\delta+d-\tau=k\in \ZBbb$, the condition forces
$k<d/2$; hence, $k\leq d/2-1/2$.
\end{remark}

Let $I_{\tau,d}=[\tau-d/2, 2\tau-d)$. 
 We are ready to describe the $\tau$-Harder-Narasimhan stratification.
  First, for $j>d/2$, let $\A_j\subset\A$ be the set of holomorphic bundles $E$ of Harder-Narasimhan type $\mu_+(E)=j$. We also set $\A_{d/2}=\A_{ss}$.
 There is an obvious inclusion $\A_j\subset \Bcal: A\mapsto (A,0)$.

\begin{enumerate}
\item[({\bf 0})] $\delta=0$: The open stratum  $\Bcal_0^{\tau}=\B^{\tau}_{ss}$ consists of $\tau$-semistable pairs.
\item[($\Ia$)] $\delta\in\Delta_{\tau,d}^+\cap I_{\tau,d}$: Then we include the strata $\A_{\delta+d-\tau}$.
Note that this includes the semistable stratum $\A_{ss}$.  The bundles in this strata that are not semistable have a unique description as extensions
\begin{equation} \label{eqn:extension}
0\lra L_1\lra E\lra L_2\lra 0
\end{equation}
where $\deg L_1=\mu_+(E)=\delta+d-\tau$.
\item[($\Ib$)] $\delta\in \Delta_{\tau,d}^+\cap [2\tau-d,+\infty)$: Then
$
\Bcal_{\delta}^{\tau}=\left\{ (E,\Phi) : \mu_+(E)=\delta+d-\tau\right\}
$.
These are extensions \eqref{eqn:extension}, $\deg  L_1=\mu_+(E)=\delta+d-\tau$, $\Phi\subset H^0(L_1)$.
\item[($\IIplus$)] $\delta\in \Delta_{\tau,d}^+\cap (0,2\tau-d)$: Then
$
\Bcal_\delta^{\tau}=\left\{  (E,\Phi) : \deg\Phi=\delta+d-\tau\right\}
$.
These are extensions \eqref{eqn:extension}, $\deg  L_1=\delta+d-\tau$, $\Phi\subset H^0(L_1)$.
\item[($\IIminus$)] $\delta \in \Delta_{\tau,d}^-$: Then
$
\Bcal_\delta^{\tau}=\left\{ (E,\Phi) : \mu_+(E)=\delta+\tau\ ,\ \deg\Phi<d/2\right\}
$.
These  are extensions 
$$
0\lra L_2\lra E\lra L_1\lra 0
$$
where $\deg  L_2=\mu_+(E)$, and the projection of $\Phi$ to
$H^0(L_1)$ is nonzero.
\end{enumerate}
For simplicity of notation, when $\tau$ is fixed we will mostly omit the superscript: $\B_\delta=\B_\delta^\tau$.

\begin{remark}
It is simple to verify that the stratification obtained above coincides with the possible Harder-Narasimhan filtrations of pairs $(E,\Phi)$ considered as \emph{coherent systems} (see \cite{LePotier93, RV94, KingNewstead95}).
\end{remark}

It will be convenient to organize $\Delta_{\tau,d}$ by the slope  of the subbundle in the maximal destabilizing subpair.  Define $j:\Delta_{\tau,d}\setminus\{0\}\to \{d/2\}\cup\{k\in \ZBbb: k\geq d-\tau\}$ by
\begin{equation} \label{eqn:j}
j(\delta)=\begin{cases} 
\delta+d-\tau\, , & \delta\in \Delta^+_{\tau,d} \\
\delta+\tau\, , & \delta\in \Delta^-_{\tau,d}
\end{cases}
\end{equation}
Notice that $j(\delta)=\deg L_1$ for $ \delta\in \Delta^+_{\tau,d}$, and $j(\delta)=\deg L_2$ for $ \delta\in \Delta^-_{\tau,d}$,
where $L_1$, $L_2$ refer to the line subbundles of $E$ in the filtrations above.  Note that $j$ is surjective. It is precisely 2-1 on the image of $\Delta^-_{\tau,d}$ and 1-1 elsewhere (if $d$ odd; otherwise $d/2$ labels both the stratum $\A_{ss}$ and the strictly semistable bundles of type $\IIplus$).   It is not order preserving but is, of course, order preserving on each of $\Delta_{\tau,d}^\pm$ separately.

\begin{definition} \label{def:hn_stratification}
For $\delta\in \Delta_{\tau,d}$, let 
$$
X_{\delta}= \bigcup_{ \delta'\leq\delta \, ,\,
 \delta'\in  \Delta_{\tau,d}} \Bcal_{\delta'}\ \cup \bigcup_{ \delta'\leq\delta \, ,\,
 \delta'\in  \Delta_{\tau,d}^+\cap I_{\tau,d}} \A_{j(\delta')} 
 $$
For $\delta\in \Delta^+_{\tau,d}\cap I_{\tau,d}$, let
$$
 X_{\delta}'= \bigcup_{ \delta'\leq\delta \, ,\,
 \delta'\in  \Delta_{\tau,d}} \Bcal_{\delta'}\ \cup \bigcup_{ \delta'<\delta \, ,\,
 \delta'\in  \Delta_{\tau,d}^+\cap I_{\tau,d}} \A_{j(\delta')}
 $$
 For $\delta\not\in \Delta^+_{\tau,d}\cap I_{\tau,d}$, let 
 $$
 X_{\delta}'= \bigcup_{ \delta'<\delta \, ,\,
 \delta'\in  \Delta_{\tau,d}} \Bcal_{\delta'}\ \cup \bigcup_{ \delta'<\delta \, ,\,
 \delta'\in  \Delta_{\tau,d}^+\cap I_{\tau,d}} \A_{j(\delta')}
 $$
 We call the collection $\{X_\delta, X_\delta'\}_{\delta\in \Delta_{\tau, d}}$ the \emph{$\tau$-Harder-Narasimhan stratification} of $\B$.
\end{definition}
\noindent
Note that $X_{\delta_1}\subset X_\delta'\subsetneq X_\delta\subset X_{\delta_2}'$, where $\delta_1$ is the predecessor and $\delta_2$ is the successor of $\delta $ in $\Delta_{\tau,d}$.  If $\delta\not\in\Delta^+_{\tau,d}\cap I_{\tau,d}$, then $X_\delta'=X_{\delta_1}$ and $X_\delta=X_{\delta_2}'$ .    In the special case $\delta=\tau-d/2$, we  have
\begin{align}
X_{\tau-d/2}&= X'_{\tau-d/2}\cup \A_{ss}   \label{eqn:x_xprime} \\
X'_{\tau-d/2}&=
\begin{cases}  X_{\delta_1} &\text{ if $d$ is odd} \\
X_{\delta_1}\cup \B_{\tau-d/2} &\text{ if $d$ is even}
\end{cases} \label{eqn:xprime}
\end{align}
The following is clear.
\begin{proposition}
The sets $\{X_\delta, X_\delta'\}_{\delta\in \Delta_{\tau, d}}$, are locally closed in $\B$, $\G$-invariant, and satisfy
\begin{align*}
\B=\bigcup_{\delta\in \Delta_{\tau, d}} X_\delta&=\bigcup_{\delta\in \Delta_{\tau, d}} X_\delta' \\
\overline \B_\delta\subset\bigcup_{\delta\leq \delta'\, ,\, \delta'\in \Delta_{\tau, d}} 
\B_{\delta'} 
&=\B_\delta\cup \bigcup_{\delta<\delta'\, ,\, \delta'\in \Delta_{\tau, d}} \B_{\delta'}'  
\\ 
\overline \B_\delta'\subset \bigcup_{\delta\leq \delta'\, ,\, \delta'\in \Delta_{\tau, d}}
 \B_{\delta'}'
&=\B_\delta'\cup \bigcup_{\delta<\delta'\, ,\, \delta'\in \Delta_{\tau, d}} \B_{\delta'}
\end{align*}
\end{proposition}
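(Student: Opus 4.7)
The plan is to verify in order the four assertions---$\G$-invariance, the union equalities, local closedness, and the closure relations---relying throughout on the upper semicontinuity of the invariants $\mu_+(E)$ and $-\mu_-(E,\Phi)$, and hence of $\delta$, combined with the case analysis of Lemma \ref{lem:basic}. $\G$-invariance is immediate, since $\mu_+(E)$, $\deg\Phi$, and $\mu_-(E,\Phi)$ depend only on the isomorphism class of the holomorphic pair $((E,d_A''),\Phi)$ and are therefore $\G^\C$-invariant. For the union equality, every $(A,\Phi) \in \B$ has a well-defined $\delta \in \Delta_{\tau,d}$, and Lemma \ref{lem:basic} together with the explicit $\Phi \equiv 0$ case places $(A,\Phi)$ in exactly one of (0), ($\Ia$), ($\Ib$), ($\IIplus$), ($\IIminus$), hence in the corresponding $\B_\delta$, or in $\A_{j(\delta)}$ when $\delta \in \Delta^+_{\tau,d}\cap I_{\tau,d}$ and $\Phi \equiv 0$. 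The refinement $X_\delta' \subsetneq X_\delta$ simply tracks whether the $\A_{j(\delta)}$ part has been appended.

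For local closedness I would use upper semicontinuity of $\mu_+$ on $\A$ (classical; cf.\ Shatz) together with upper semicontinuity of $-\mu_-$ on $\B$, which reduces to lower semicontinuity of $\deg\Phi$ on $\{\Phi\not\equiv 0\}$ combined with the inequality $\deg\Phi_n\leq \mu_+(E_n)$ to handle the case $\Phi_n\to 0$. Since $\delta$ takes discrete values in $\Delta_{\tau,d}$, each level set $\{\delta=c\}$ is an intersection of the open set $\{\delta\leq c\}$ with the closed set $\{\delta\geq c\}$, hence locally closed; intersecting further with the open condition $\{\Phi\not\equiv 0\}$ (in cases $\Ia$, $\IIplus$, $\IIminus$) or a closed condition on $\mu_+(E)$ (in case $\IIminus$) preserves local closedness. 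The $\A_j$ are locally closed in $\A$ by Atiyah--Bott/Shatz, and $\A=\{\Phi\equiv 0\}$ is closed in $\B$, so the $\A_j$ are locally closed in $\B$ as well.

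The closure relations follow from the same upper semicontinuity: if $(A_n,\Phi_n)\in \B_\delta$ converges to $(A,\Phi)$, then $\delta(A,\Phi)\geq \delta$, whence $\overline{\B_\delta}\subset \bigcup_{\delta\leq \delta'}(\B_{\delta'}\cup \A_{j(\delta')})$. The main obstacle, and the reason for maintaining both $X_\delta'$ and $X_\delta$, is to describe exactly what limits at the same level $\delta$ are possible. Here the $\C^*$-scaling $(A,\Phi)\mapsto (A,t\Phi)$ enters: for $(A,\Phi)\in \B_\delta$ in case ($\Ia$) and $t\neq 0$ the pair remains in $\B_\delta$, but as $t\to 0$ it converges to $(A,0)\in \A_{j(\delta)}$, placing $\A_{j(\delta)}$ in $\overline{\B_\delta}$ at the same $\delta$-value. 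Since conversely no element of $\A_{j(\delta)}$ lies in $\overline{\B_{\delta'}}$ for $\delta'<\delta$ (again by upper semicontinuity), this accounts precisely for the second equality in each closure display and explains why the stratification cannot be indexed by $\delta$ alone.
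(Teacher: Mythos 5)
The paper offers no argument for this proposition at all (it is introduced with ``The following is clear''), so there is no written proof to compare against; your strategy --- reduce everything to upper semicontinuity of $\mu_+$ and of $-\mu_-$, hence of $\delta$, exactly as the paper itself indicates just after Definition \ref{def:defect} --- is clearly the intended one, and the overall structure of your argument (invariance, coverage, local closedness, closure relations, plus the scaling $\Phi\mapsto t\Phi$ to see why $\A_{j(\delta)}$ must be attached after $\B_\delta$) is right. However, two specific assertions are wrong as written. First, upper semicontinuity of $-\mu_-=\deg\Phi-d$ on $\{\Phi\not\equiv 0\}$ requires \emph{upper}, not lower, semicontinuity of $\deg\Phi$: you need $\deg\Phi\geq\limsup\deg\Phi_n$, i.e.\ zeros can only be acquired in the limit. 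Lower semicontinuity of $\deg\Phi$ is both the wrong ingredient and false (e.g.\ $\Phi_n=(\sigma_p,\tfrac1n\sigma_q)\to(\sigma_p,0)$ in ${\mathcal O}(p)\oplus{\mathcal O}(q)$ has $\deg\Phi_n=0$ but $\deg\Phi=1$). The fact you actually need --- that $\{\deg\Phi\geq k\}\cup\{\Phi\equiv0\}$ is closed, which follows by projecting the closed incidence set in $\B\times S^kM$ --- is true, so this is a repairable misstatement rather than a fatal gap, but the direction matters since the entire local-closedness and closure analysis hangs on it.

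Second, the claim that ``no element of $\A_{j(\delta)}$ lies in $\overline{\B_{\delta'}}$ for $\delta'<\delta$ (again by upper semicontinuity)'' is false, and upper semicontinuity gives no such thing: semicontinuity bounds the label of a limit from \emph{below} by the labels along the sequence, so closures of low strata are perfectly free to contain high strata --- indeed $\overline{\B^\tau_{ss}}$ (label $0$) meets every $\A_{j}$. What is true, and what the refined indexing by $\{X_\delta,X'_\delta\}$ actually requires, is the opposite containment: $\overline{\B_\delta}$ and $\overline{\A_{j(\delta)}}$ meet no stratum of label strictly less than $\delta$, and $\overline{\A_{j(\delta)}}\cap\B_\delta=\emptyset$ because $\{\Phi\equiv0\}$ is closed, while $\overline{\B_\delta}\supset\A_{j(\delta)}$ by your scaling argument (which, incidentally, concerns the type $\IIplus$ stratum $\B_\delta$ with $\delta\in I_{\tau,d}$, not ``case $\Ia$'' --- the $\Ia$ piece is $\A_{j(\delta)}$ itself and carries no $\B_\delta$). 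Together these give that $X_{\delta_1}$, $X'_\delta$, $X_\delta$ are successively open in one another, which is the content the Morse theory later uses. With those two corrections your proof is sound.
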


\begin{remark} \label{rem:nongeneric}
To extend this stratification in the case of nongeneric $\tau$, we define 
the sets $\Delta_{\tau,d}^\pm$ and the corresponding strata as 
above.  For   $\delta\in \Delta_{\tau,d}^+\cap\Delta_{\tau,d}^-$ there are two or possibly
three  components with the same label.
\end{remark}

Let us note the following behavior as $\tau$ varies. 
 For $\tau_1\leq \tau_2$, 
there is a well-defined map $\Delta_{\tau_1,d}\to \Delta_{\tau_2,d}$
given by $\delta\mapsto\max\{\delta\pm (\tau_2-\tau_1), 0\}$, where $\pm$ depends on 
$\delta\in \Delta^\pm_{\tau,d}$. 
Hence,  elements of $\Delta^+_{\tau,d}$ (white circles in Figure 1 below) 
``move" to the right, and elements of $\Delta^-_{\tau,d}$ (dark circles) 
 ``move" to the left as $\tau $ increases.
 The map is an order preserving bijection \emph{provided}
 $\tau_1$, $\tau_2$ are in a connected component of
$(d/2, d)\setminus C_d$, where
\begin{equation} \label{eqn:cd}
C_d=\{ \tau_c\in (d/2,d) : 2\tau_c\in \ZBbb \text{ if $d$ even}, 4\tau_c\in 
\ZBbb \text{ if $d$ odd}\}
\end{equation}
However, as $\tau_2$ crosses an element of $C_d$, there is a ``flip" in the stratification.  
When this flip occurs at $\delta=0$, 
this is the phenomenon discovered by Thaddeus \cite{Thaddeus94};
 the discussion here is an extension  of this effect to the entire
stratification.

\setlength{\unitlength}{1cm}
\begin{picture}(14,6)
\put(2.5,4.9){$\Delta_{\tau_1,d}$}
\put(2.5,2.4){$\Delta_{\tau_2,d}$}
\ifPDF
\put(3.6,1){
{\scalebox{.5}{\includegraphics{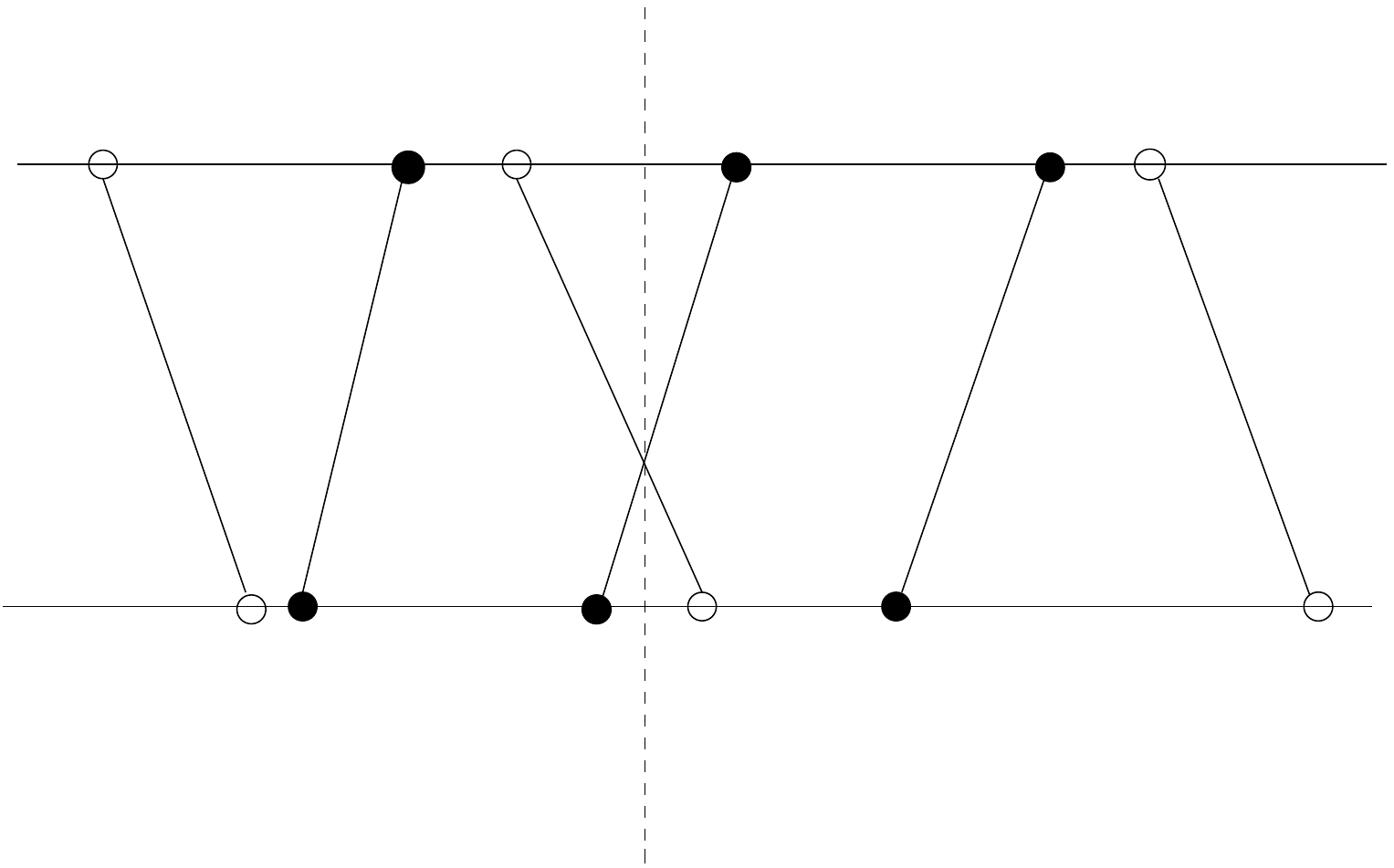}}}}
\else
\put(3.6,1){
{\scalebox{.5}{\includegraphics{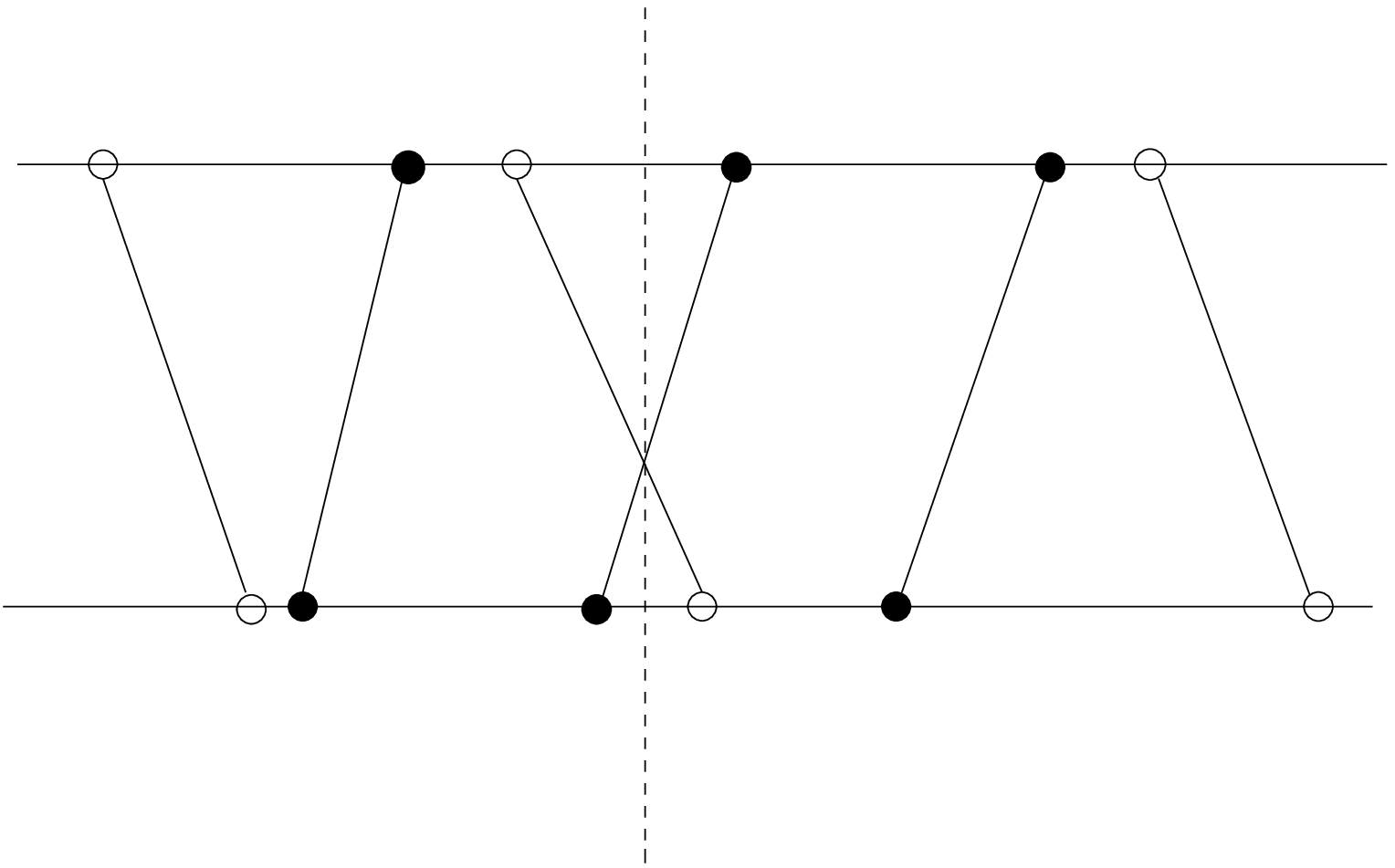}}}}
\fi
\put(6,0){Figure 1. A ``flip"}
\end{picture}
\medskip

Finally, we will also have need to refer to the Harder-Narasimhan stratification of the space $\A$ of unitary connections on $E$.  We denote this by
\begin{equation} \label{eqn:Ahn}
X^\A_j=\bigcup_{d/2\leq j'\leq j} \A_{j'}
\end{equation}
The following statement   will be used later on.  It is  an immediate consequence of the descriptions of the strata above.

\begin{lemma} \label{lem:projection}
Consider the projection $\pr : \B\to \A$.  Then
\begin{align*}
\pr(\Bcal_\delta)&= \A_{j(\delta)}\ , \  \delta\in \Delta^-_{\tau,d}\cup\left( \Delta^+_{\tau,d}\cap[\tau-d/2,+\infty)\right)  \\
\pr(\Bcal_\delta)&= X^\A_{d-j(\delta)}\, ,\ \delta\in \Delta^+_{\tau,d}\cap (0,\tau-d/2) 
\end{align*}
\end{lemma}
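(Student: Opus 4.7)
My plan is a case-by-case verification matching the enumeration $(\Ib)$, $(\IIplus)$, $(\IIminus)$ of the strata $\B_\delta$, with the $(\IIplus)$ case further split at $\delta = \tau - d/2$ as in the statement. For each case I would read off from the description of $\B_\delta$ in Section~\ref{subsec:Harder-Narasimhan} the constraint it imposes on $E$ alone, which immediately yields the inclusion $\pr(\B_\delta) \subseteq \A_{j(\delta)}$ or $\pr(\B_\delta) \subseteq X^\A_{d-j(\delta)}$ as appropriate; the reverse inclusion is then obtained by exhibiting, for each such $E$, an explicit $\Phi$ that places $(A, \Phi)$ in $\B_\delta$.

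The only bundle-theoretic input I need is the standard inequality for rank $2$ holomorphic bundles on a curve: if $L \neq L'$ are distinct line sub-bundles of $E$, then $\deg L + \deg L' \leq d$, since the map $L \oplus L' \to E$ is generically an isomorphism. In particular, any line sub-bundle of degree at least $d/2$ is the unique maximal destabilizing sub-bundle and realizes $\mu_+(E)$.

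For $\delta$ of type $(\Ib)$ or $(\IIminus)$, the stratum description literally requires $\mu_+(E) = j(\delta)$, so $\pr(\B_\delta) \subseteq \A_{j(\delta)}$ is built in. Surjectivity in $(\Ib)$ is immediate upon taking $\Phi \equiv 0$: Lemma~\ref{lem:basic} gives $\delta(E, 0) = \mu_+(E) - d + \tau$, which equals $\delta$ precisely when $E \in \A_{j(\delta)}$. Surjectivity in $(\IIminus)$ follows from the extension $0 \to L_2 \to E \to L_1 \to 0$ with $\deg L_2 = j(\delta)$: a nonzero element of $H^0(L_1)$ lifts to a section of $E$ whose zero divisor has degree at most $\deg L_1 = d-j(\delta) < d/2$. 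For $\delta$ of type $(\IIplus)$ with $\delta \geq \tau - d/2$, the stratum condition $\deg \Phi = j(\delta)$ produces a saturated sub-line bundle $L_\Phi$ of degree $j(\delta) \geq d/2$; the rank $2$ inequality then forces $\mu_+(E) = j(\delta)$, and conversely a suitable nonzero section of the maximal destabilizing line sub-bundle $L_1$ of any $E \in \A_{j(\delta)}$ serves as the required $\Phi$.

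The case that requires the most care is $\delta$ of type $(\IIplus)$ with $\delta \in (0, \tau - d/2)$, where $j(\delta) < d/2$ and so $L_\Phi$ no longer destabilizes $E$. The rank $2$ inequality applied to $L_\Phi$ and any competing line sub-bundle still constrains the latter to have degree at most $d - j(\delta)$, giving $\mu_+(E) \leq d - j(\delta)$ and therefore $\pr(\B_\delta) \subseteq X^\A_{d-j(\delta)}$. The main obstacle is the reverse inclusion: for every $E$ with $d/2 \leq \mu_+(E) \leq d - j(\delta)$, I would produce a section $\Phi \in H^0(E)$ whose zero divisor has degree exactly $j(\delta)$. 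My plan is to select an effective divisor $D$ of degree $j(\delta)$ and show that the twist $E(-D)$ has a nowhere-vanishing section (using that the degree of $E$ and the stability bound $\mu_+(E) \leq d - j(\delta)$ leave enough sections generically), which then lifts to a section of $E$ with zero divisor precisely $D$.
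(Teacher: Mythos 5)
Your proofs of the \emph{inclusions} $\pr(\B_\delta)\subseteq \A_{j(\delta)}$, resp.\ $\pr(\B_\delta)\subseteq X^\A_{d-j(\delta)}$, are correct and complete, and they are exactly the ``immediate consequence of the descriptions of the strata'' that the paper has in mind: the stratum description singles out a line sub-bundle ($L_1$, $L_2$, or the saturation $L_\Phi$), and the rank--two inequality $\deg L+\deg L'\leq d$ pins down $\mu_+(E)$. Together with the trivial observation that $(E,0)\in\B_\delta$ for type $\Ib$, these inclusions are also the only part of the lemma that is actually invoked later (e.g.\ in the excision arguments of Proposition \ref{prop:key}, where one only needs to know that $(A,\Phi)\in\B_{\delta'}$ forces $A\in\A_{j(\delta')}$).

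The surjectivity half of your proposal, however, has genuine gaps, and in fact the claimed equalities fail in general. In case $\IIminus$ you assert that a nonzero element of $H^0(L_1)$ lifts to a section of $E$; this requires both $H^0(L_1)\neq 0$ and the vanishing of its image under the connecting map $H^0(L_1)\to H^1(L_2)$, and neither is automatic. Concretely, for $\delta=d-\tau\in\Delta^-_{\tau,d}$ one has $\deg L_1=0$, and any $(E,\Phi)\in\B_\delta$ must contain $\mathcal{O}$ as a split sub-bundle, so $\pr(\B_{d-\tau})$ misses every $E=L_2\oplus L_1$ with $L_1$ a nontrivial degree--zero line bundle, although such $E$ lie in $\A_d$. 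The same issue (possible vanishing of $H^0(L_1)$ when $j(\delta)<g$) undermines ``a suitable nonzero section of $L_1$'' in your $\IIplus$, $\delta\geq\tau-d/2$ case; and for $d$ even and $\delta=\tau-d/2$ the failure is unavoidable, since a section with $\deg\Phi=d/2$ forces $E$ to be strictly semistable, so $\pr(\B_{\tau-d/2})$ contains no stable bundle while $\A_{j(\delta)}=\A_{ss}$ does. Your final case is only a plan, and it too breaks down for a stable $E$ with $H^0(E)=0$ (which exists once $d\leq 2g-2$): such an $E$ lies in $X^\A_{d-j(\delta)}$ but in the image of no type--${\bf II}$ stratum. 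So the equalities would require extra hypotheses (roughly, $d$ large relative to $g$); the statement should be read, and proved, as the inclusions you established, which is all the rest of the paper uses.
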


\subsection{Deformation theory}
 Fix a conformal metric on $M$, normalized\footnote{More generally, the scale invariant parameter is $\tau\vol(M)/2\pi$.} for convenience so that $\vol(M)=2\pi$.
Infinitesimal deformations of $(A, \Phi) \in \mathcal{B}$ modulo equivalence are described by the following elliptic complex, which we denote by $\mathcal{C}_{(A, \Phi)}$ (cf.\ \cite{BradlowDaskal91}).

\begin{align} 
\begin{split}
\xymatrix{
\mathcal{C}_{(A, \Phi)}^0 \ar[r]^(0.45){D_1} &\mathcal{C}_{(A, \Phi)}^1\ar[r]^(0.45){D_2} & \mathcal{C}_{(A, \Phi)}^2
} \hskip.75in
\label{eqn:deformation-complex}\\
\xymatrix{
\Omega^0(\End E) \ar[r]^(0.35){D_1} & \Omega^{0,1}(\End E) \oplus \Omega^{0}(E) \ar[r]^(0.65){D_2} & \Omega^{0,1}(E) 
}
\end{split}
\end{align}
$$
D_1(u)  = (-d_A'' u, u\Phi)
\ ,\
D_2(a, \varphi)  = d_A'' \varphi + a \Phi
$$
Here, $D_1$ is the linearization of the action of the complex gauge group $\G^\C$ on $\mathcal{B}$, and $D_2$ is the linearization of the condition $d_A''\Phi=0$.
Note that $D_2 D_1  = 0$ if $(A, \Phi) \in \mathcal{B}$. 
The hermitian metric gives adjoint operators 
\begin{equation} \label{eqn:adjoint}
D_1^\ast(a,\varphi)=-(d_A'')^\ast a+\varphi\Phi^\ast\, ,\,
D_2^\ast(\beta)=(\beta\Phi^\ast, (d_A'')^\ast\beta)
\end{equation}
The spaces of \emph{harmonic forms} are  by definition
\begin{align*}
{\mathcal H}^0(\mathcal{C}_{(A, \Phi)})&=\ker D_1 \\
{\mathcal H}^1(\mathcal{C}_{(A, \Phi)})&=\ker D_1^\ast\cap \ker D_2 \\
{\mathcal H}^2(\mathcal{C}_{(A, \Phi)})&=\ker D_2^\ast 
\end{align*}

Vectors in $\Omega^{0,1}(\End E) \oplus \Omega^{0}(E)$ that are orthogonal to the $\mathcal{G}^\C$-orbit through $(A, \Phi)$ are in $\ker D_1^*$, and a \emph{slice} for the action of $\mathcal{G}^\C$ on $\mathcal{B}$ is therefore given by
\begin{equation}\label{eqn:def-slice}
\mathcal{S}_{(A, \Phi)} = \ker D_1^* \cap \left\{ (a, \varphi) \in\Omega^{0,1}(\End E) \oplus \Omega^{0}(E) \, : \, D_2(a,\varphi) + a\varphi =0\right\}
\end{equation}
Define the \emph{slice map} 
\begin{align}\label{eqn:local-homeo}
\begin{split}
\Sigma : &\left( \ker D_1 \right)^\perp \times \mathcal{S}_{(A, \Phi)}  \rightarrow \mathcal{B} \\
 (u, a, \varphi) & \mapsto e^u \cdot ( A + a, \Phi + \varphi) 
\end{split}
\end{align}

The proof of the following may be modeled on  \cite[Proposition 4.12]{Wilkin08}.  We omit the details.
\begin{proposition}\label{prop:slice}
The slice map $\Sigma$ is a local homeomorphism from a neighborhood of $0$ in $(\ker D_1)^\perp \times \mathcal{S}_{(A, \Phi)}$ to a neighborhood  of $(A, \Phi)$ in $\mathcal{B}$.
\end{proposition}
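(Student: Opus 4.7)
The plan is to follow the standard slice theorem template for group actions in gauge theory, as modeled in \cite[Proposition 4.12]{Wilkin08}. Throughout, I would work in suitable Sobolev completions so that $\mathcal{G}^\C$ is a Banach Lie group acting smoothly on the ambient affine space $\mathcal{A}\times\Omega^0(E)$, and $\mathcal{B}$ is a closed subset.

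First, I would compute the differential of $\Sigma$ at the origin. Since the infinitesimal action of $\mathcal{G}^\C$ is $D_1$, one finds
$$
d\Sigma\big|_{(0,0,0)}(u,a,\varphi) = D_1(u) + (a,\varphi).
$$
Both $D_1(u)$ and $(a,\varphi)\in \ker D_1^*\cap \ker D_2$ lie in $\ker D_2$, consistent with $\Sigma$ landing in $\mathcal{B}$. The Hodge decomposition associated to \eqref{eqn:deformation-complex} gives an orthogonal splitting
$$
\ker D_2 = \image D_1\ \oplus\ \mathcal{H}^1(\mathcal{C}_{(A,\Phi)}),
$$
so that $d\Sigma|_0$ is a linear bijection from $(\ker D_1)^\perp \oplus \mathcal{H}^1(\mathcal{C}_{(A,\Phi)})$ onto $\ker D_2$.

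The crux of the argument is the construction of a local inverse. Given $(A',\Phi')\in\mathcal{B}$ close to $(A,\Phi)$, I would solve
$$
F(u;A',\Phi') := D_1^*\bigl(e^{-u}\!\cdot\!(A',\Phi') - (A,\Phi)\bigr) = 0
$$
for a small $u\in(\ker D_1)^\perp$. The partial derivative of $F$ with respect to $u$ at $(0;A,\Phi)$ is $-D_1^*D_1$, which is an isomorphism of $(\ker D_1)^\perp$ by elliptic regularity. The Banach space implicit function theorem then furnishes a unique small $u=u(A',\Phi')$ depending continuously on $(A',\Phi')$. Setting $(a,\varphi) := e^{-u}\!\cdot\!(A',\Phi') - (A,\Phi)$, the equation $F=0$ gives $(a,\varphi)\in\ker D_1^*$; and since $\mathcal{B}$ is $\mathcal{G}^\C$-invariant, $(A+a,\Phi+\varphi)$ still lies in $\mathcal{B}$, so automatically $D_2(a,\varphi) + a\varphi = 0$ and hence $(a,\varphi)\in \mathcal{S}_{(A,\Phi)}$. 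This defines the continuous inverse of $\Sigma$ on a neighborhood of $(A,\Phi)$.

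The main subtlety---the reason one cannot simply invoke a generic slice theorem---is that $\mathcal{B}$ itself is not a Banach manifold at points where $\mathcal{H}^2(\mathcal{C}_{(A,\Phi)})\neq 0$, so the deformation theory may be obstructed and $\mathcal{S}_{(A,\Phi)}$ need not be smooth. The above argument sidesteps this by solving only the unobstructed linear elliptic gauge-fixing equation $D_1^*=0$, which is decoupled from the nonlinear equation $d_A''\Phi=0$ defining $\mathcal{B}$; the slice property is then inherited \emph{for free} from the $\mathcal{G}^\C$-invariance of $\mathcal{B}$. Continuity of $\Sigma$ follows from smoothness of the action, completing the local homeomorphism claim without requiring any manifold structure on $\mathcal{S}_{(A,\Phi)}$.
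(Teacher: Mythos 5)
Your argument is correct and is precisely the standard Coulomb-gauge/implicit-function-theorem slice argument that the paper itself omits, deferring to \cite[Proposition 4.12]{Wilkin08}; in particular, your key observation that one only needs to solve the unobstructed gauge-fixing equation $D_1^*=0$ and then inherit the condition $D_2(a,\varphi)+a\varphi=0$ from $\mathcal{G}^\C$-invariance of $\mathcal{B}$ is exactly the right way to avoid the non-smoothness of $\mathcal{S}_{(A,\Phi)}$. The only point left implicit is local injectivity of $\Sigma$, which follows at once from the uniqueness clause of the implicit function theorem: if $\Sigma(u,a,\varphi)=(A',\Phi')$ with $(u,a,\varphi)$ small, then $(a,\varphi)\in\ker D_1^*$ means $u$ solves $F(u;A',\Phi')=0$ and is therefore the unique small such $u$, whence $(a,\varphi)$ is determined as well.
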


The \emph{Kuranishi map} is defined by
\begin{align*}
 \Omega^{0,1}(\End E ) \oplus \Omega^{0}(E)&{ \buildrel k\over \longrightarrow }\ \Omega^{0,1}(\End E) \oplus \Omega^{0}(E)\notag \\
k(a, \varphi) & = (a, \varphi) + D_2^* \circ G_2 (a\varphi) 
\end{align*}
where $G_2$ denotes the Green's operator associated to the laplacian $D_2(D_2)^\ast$.
We have the following standard result (cf.\ \cite[Chapter VII]{Kobayashi87} for the case of holomorphic bundles over a K\"ahler manifold and \cite{BradlowDaskal91} for this case).
\begin{proposition} \label{prop:kuranishi}
The Kuranishi map $k$ maps $\Scal_{(A,\Phi)}$ to harmonics $\Hcal^1(\Ccal_{(A, \Phi)})$, and
in a neighborhood of zero  it is
 a local homeomorphism   onto its image. Moreover, if $\Hcal^2(\Ccal_{(A, \Phi)})=\{0\}$, then $k$ is a local homeomorphism $\Scal_{(A,\Phi)}\to\Hcal^1(\Ccal_{(A, \Phi)})$.
\end{proposition}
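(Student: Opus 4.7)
The plan is to verify the three assertions in turn, working throughout in appropriate Sobolev completions chosen so that pointwise multiplication is a bounded bilinear operation and the Green's operator $G_2$ for the Laplacian $\Delta_2 = D_2 D_2^*$ is bounded. The key ingredient is the Hodge decomposition on $\Omega^{0,1}(E)$,
\[ I = D_2 D_2^* G_2 + H_2, \]
where $H_2$ denotes orthogonal projection onto $\Hcal^2(\Ccal_{(A,\Phi)}) = \ker D_2^*$.

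For the first assertion I would verify both defining conditions of $\Hcal^1(\Ccal_{(A,\Phi)}) = \ker D_1^* \cap \ker D_2$ separately. The $D_1^*$-equation is immediate: on $\ker D_1^*$ one has $D_1^* k(a,\varphi) = D_1^* D_2^* G_2(a\varphi) = 0$ from $D_1^* D_2^* = (D_2 D_1)^* = 0$. For the $D_2$-equation, apply $H_2$ to the slice relation $D_2(a,\varphi) = -a\varphi$; since $\image D_2 \perp \ker D_2^*$ gives $H_2 D_2 = 0$, one obtains the crucial identity $H_2(a\varphi) = 0$ automatically on $\Scal_{(A,\Phi)}$. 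Hence
\[ D_2 k(a,\varphi) = D_2(a,\varphi) + D_2 D_2^* G_2(a\varphi) = -a\varphi + (I - H_2)(a\varphi) = 0. \]

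For the second assertion, note that $D_2^* G_2(a\varphi)$ is quadratic in $(a,\varphi)$, so $dk(0) = \id$. To build the inverse, given small $\eta \in \Hcal^1(\Ccal_{(A,\Phi)})$, I would solve the fixed point equation $(a,\varphi) = \eta - D_2^* G_2(a\varphi)$ by a standard contraction mapping argument in the Sobolev norm, producing a unique small solution depending continuously on $\eta$. Because $\eta \in \ker D_1^*$ and $\image D_2^* \subset \ker D_1^*$, this solution lies in $\ker D_1^*$. Moreover,
\[ D_2(a,\varphi) + a\varphi = D_2 \eta - D_2 D_2^* G_2(a\varphi) + a\varphi = H_2(a\varphi), \]
so the solution lies in $\Scal_{(A,\Phi)}$ precisely when the obstruction $H_2(a\varphi) \in \Hcal^2(\Ccal_{(A,\Phi)})$ vanishes. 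This exhibits $k|_{\Scal_{(A,\Phi)}}$ as a local homeomorphism onto its image, which is the zero locus in $\Hcal^1(\Ccal_{(A,\Phi)})$ of the finite-dimensional analytic obstruction map $\eta \mapsto H_2(a(\eta)\varphi(\eta))$. When $\Hcal^2(\Ccal_{(A,\Phi)}) = \{0\}$ this obstruction is vacuous, so $\eta \mapsto (a(\eta),\varphi(\eta))$ provides a genuine inverse $\Hcal^1 \to \Scal_{(A,\Phi)}$ near zero, giving the third claim. The main technical burden is the Banach-space bookkeeping: choosing Sobolev indices on the Riemann surface for which $G_2$ is bounded, multiplication is continuous, and the orthogonality and inclusion relations used above transfer to the completions.
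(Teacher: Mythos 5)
Your argument is correct and is precisely the standard Kuranishi-method proof that the paper invokes by citation (Kobayashi, Ch.~VII, and Bradlow--Daskalopoulos) rather than writing out: the identity $D_1^*D_2^*=(D_2D_1)^*=0$ together with $H_2(a\varphi)=-H_2D_2(a,\varphi)=0$ on the slice gives the containment $k(\Scal_{(A,\Phi)})\subset\Hcal^1(\Ccal_{(A,\Phi)})$, and the contraction-mapping inverse $\eta\mapsto\eta-D_2^*G_2(a\varphi)$ with obstruction $H_2(a(\eta)\varphi(\eta))$ yields the local homeomorphism statements, including surjectivity onto a neighborhood of $0$ in $\Hcal^1$ when $\Hcal^2(\Ccal_{(A,\Phi)})=\{0\}$. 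The Sobolev bookkeeping you defer is routine on a Riemann surface (e.g.\ $L^2_1\times L^2_1\to L^2$ multiplication and elliptic regularity for $G_2$), so nothing essential is missing.
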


The following is immediate from \eqref{eqn:deformation-complex} and \eqref{eqn:adjoint}.
\begin{lemma} \label{lem:h2}
Given $(A,\Phi)\in \B$, if $\Phi\neq 0$ then $\Hcal^0(\mathcal{C}_{(A, \Phi)})=\Hcal^2(\mathcal{C}_{(A, \Phi)})=\{0\}$.
If $ H^1(E)=\{0\}$ then $\Hcal^2(\mathcal{C}_{(A, \Phi)})=\{0\}$.
\end{lemma}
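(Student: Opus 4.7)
The plan is to unpack each harmonic space directly from the formulas \eqref{eqn:deformation-complex} and \eqref{eqn:adjoint}, treating $\Hcal^0$ and $\Hcal^2$ in turn. Both parts reduce to a combination of pointwise algebra with a Riemann-surface-specific Hodge/Dolbeault observation.

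For $\Hcal^0 = \ker D_1$, the defining equations $d_A'' u = 0$ and $u\Phi = 0$ say that $u$ is a holomorphic endomorphism of $E$ which annihilates $\Phi$. Since $\Phi \not\equiv 0$ is holomorphic, the set $\{\Phi \neq 0\}$ is open and dense, and at every such point $u$ has nonzero kernel, so it is singular there. Thus $\det u$, viewed as a holomorphic section of the trivial line bundle $\det E \otimes (\det E)^{*}$, vanishes on a dense set and so is identically zero. In the rank-two setting this forces $u$ to have rank $\leq 1$ everywhere; the generic kernel extends by saturation to a line subsheaf $L\subset E$ containing $\Phi$, and the image of $u$ to a second line subsheaf $L'\subset E$, with $u$ factoring as $E \to E/L \to L' \hookrightarrow E$. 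A careful accounting of the degrees of $L$ and $L'$ against the assumption that $u$ defines a nonzero holomorphic endomorphism annihilating the nonzero section $\Phi$ (using simplicity of the pair in the relevant context) then forces $u\equiv 0$.

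For $\Hcal^2 = \ker D_2^*$, an element $\beta \in \Omega^{0,1}(E)$ satisfies $\beta \Phi^* = 0$ and $(d_A'')^* \beta = 0$. Because $M$ is a Riemann surface, $\Omega^{0,2}(E) = 0$, so $d_A''\beta = 0$ automatically and $\beta$ is $d_A''$-harmonic; by Dolbeault $\Hcal^{0,1}(E) \cong H^1(E)$. This immediately gives the second clause: $H^1(E)=0$ forces $\beta \equiv 0$. For the first clause I would expand $\beta \Phi^* \in \Omega^{0,1}(\End E)$ pointwise in a local frame; writing $\beta = \sum_i \beta_i\, e_i\otimes d\bar z$ and $\Phi = \sum_j \Phi^j e_j$, the condition $\beta\Phi^* = 0$ reads $\beta_i \overline{\Phi^j} = 0$ for all $i,j$. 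Wherever $\Phi(p)\neq 0$ some $\overline{\Phi^j(p)}\neq 0$, forcing $\beta(p)=0$. Hence $\beta$ vanishes on the open dense set $\{\Phi\neq 0\}$, and unique continuation for the elliptic operator $(d_A'')^* d_A'' + d_A''(d_A'')^*$ applied to the harmonic form $\beta$ then gives $\beta \equiv 0$ throughout $M$.

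The only real subtlety is the rank-bookkeeping step in the $\Hcal^0$ argument; once one knows $u$ is everywhere of rank $\leq 1$, one has to extract from the factorization through $L$ and $L'$ that no nonzero such $u$ can coexist with a nonzero holomorphic $\Phi$ in the setting at hand. The $\Hcal^2$ vanishing is clean: pointwise algebra handles the $\Phi\neq 0$ case (via unique continuation), and Dolbeault handles the $H^1(E)=0$ case outright.
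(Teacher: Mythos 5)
Your treatment of $\Hcal^2$ is correct. The pointwise identity $(\beta\Phi^\ast)_{ij}=\beta_i\overline{\Phi^j}$ forces $\beta$ to vanish on the dense open set $\{\Phi\neq 0\}$, and since $(d_A'')^\ast\beta=0$ on a Riemann surface makes $\beta$ locally the conjugate of a holomorphic object (equivalently, $\beta$ is harmonic, so corresponds under Serre duality to an element of $H^0(K_M\otimes E^\ast)$ vanishing on an open set), the identity theorem kills $\beta$ everywhere; the clause assuming $H^1(E)=\{0\}$ is immediate from $\ker (d_A'')^\ast\cap\Omega^{0,1}(E)\cong H^1(E)$. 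This is the content the paper actually relies on later: the Kuranishi-map arguments in Lemma \ref{lem:retraction1}, in case ($\IIplus$) of Section 3.3, and in Lemma \ref{lem:C1-deformation} only ever invoke the vanishing of $\Hcal^2$.

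The $\Hcal^0$ half of your argument has a genuine gap, and it is not one that more careful degree bookkeeping can close. After correctly reducing to the statement that $u$ is holomorphic of rank $\le 1$, factoring through line subsheaves $L$ and $L'$, you defer to ``simplicity of the pair in the relevant context.'' But simplicity is not a hypothesis of the lemma, and without it the conclusion is false: for a holomorphically split pair $(A_1\oplus A_2,\,\Phi_1\oplus 0)$ with $\Phi_1\in H^0(L_1)\setminus\{0\}$ --- precisely the type ${\bf II}$ critical points studied in Section 3 --- the endomorphism $u=0\oplus\id_{L_2}$ satisfies $d_A''u=0$ and $u\Phi=0$, so $\Hcal^0(\Ccal_{(A,\Phi)})\neq\{0\}$ (consistently, these critical points carry a circle of automorphisms). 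The vanishing of $\Hcal^0$ is really a statement about simple (e.g.\ $\tau$-stable) pairs, or about the lower-triangular subcomplex \eqref{eqn:def-complex-phi-nonzero}, where $u\in\Omega^0(L_1^\ast L_2)$ and $u\Phi_1$ is a holomorphic section of $L_2$ vanishing on the dense set $\{\Phi_1\neq 0\}$, whence $u\equiv 0$ directly. So you should either add an explicit simplicity hypothesis for the $\Hcal^0$ clause and verify it where the lemma is applied, or restrict the claim to the subcomplex; as written, the final step of your $\Hcal^0$ argument cannot be made to work at the stated level of generality.
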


We will be interested in the deformation complex along higher critical sets 
of the Yang-Mills-Higgs functional.  As we will see in the next section,
 in addition to the Yang-Mills connections (where $\Phi\equiv 0$), the 
other critical sets correspond to split bundles  $E = L_1 \oplus L_2$,  
$(A,\Phi)=(A_1\oplus A_2, \Phi_1\oplus \{0\})$,  with $\deg L_1=j\geq\deg L_2=d-j$.  
Here, $j=j(\delta)$ for some $\delta\in \Delta_{\tau,d}^+$, or $j=d-j(\delta)$ 
for some $\delta\in \Delta_{\tau,d}^-$.
 The set of all such critical points will therefore be denoted by $\eta_\delta\subset
\B$.   We will denote the  components of $\End E \simeq L_i\otimes L_j^\ast$ 
in the complex by $u_{ij}$, $a_{ij}$, $\varphi_{ij}$.

In this case, ${\mathcal H}^1({\mathcal C}_{(A,\Phi)})$ consists of all $(a,\varphi)$ satisfying
\begin{eqnarray}
(d_A'')^\ast a_{12}=0 & (d'')^\ast a_{22}=0  \notag\\
(d'')^\ast a_{11}-\varphi_1\Phi_1^\ast=0 & (d_A'')^\ast a_{21}-\varphi_2\Phi_1^\ast=0 \label{eqn:h1} \\
d_{A_1}''\varphi_{1}+a_{11}\Phi_1=0 &
d_{A_2}'' \varphi_{2}+a_{21}\Phi_1=0\notag
\end{eqnarray}

We use this formalism to define deformation retractions in a neighborhood of $(A,\Phi)\in\B$ in two cases.  First, we have
\begin{lemma} \label{lem:retraction1}
Suppose $(A,\Phi)=(A_1\oplus A_2,\Phi_1\oplus 0)$ is a split pair as above, $\Phi_1\neq 0$.
 Let 
\begin{align*}
\Scal_{(A,\Phi)}^{neg.}&=\{ (a,\varphi)\in \Scal_{(A,\Phi)} : 
a_{ij}=0\, ,\, (ij)\neq (21)\, , \text{and } \varphi_1=0\} \\ 
 \Scal_{(A,\Phi)}'&=\{ (a,\varphi)\in \Scal_{(A,\Phi)} : (a_{21},\varphi_2)\neq0\}
\end{align*}
Then there is an equivariant deformation retraction $\Scal_{(A,\Phi)}^{neg.}
\hookrightarrow \Scal_{(A,\Phi)}$ which restricts to a deformation retraction 
$\Scal_{(A,\Phi)}^{neg.}\setminus\{0\}\hookrightarrow \Scal_{(A,\Phi)}'$.
\end{lemma}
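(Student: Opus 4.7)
The strategy is to transport a linear deformation retraction defined on the harmonic space $\Hcal^1(\Ccal_{(A,\Phi)})$ back to the slice $\Scal_{(A,\Phi)}$ via the Kuranishi map. Since $\Phi=(\Phi_1,0)\neq 0$, Lemma~\ref{lem:h2} gives $\Hcal^2(\Ccal_{(A,\Phi)})=\{0\}$, so by Proposition~\ref{prop:kuranishi} the Kuranishi map $k$ is a local homeomorphism from a neighborhood of $0$ in $\Scal_{(A,\Phi)}$ onto a neighborhood of $0$ in $\Hcal^1(\Ccal_{(A,\Phi)})$; I would work throughout in such a neighborhood.

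Next, I would analyze the harmonic equations \eqref{eqn:h1} under the splitting $E=L_1\oplus L_2$ and $\Phi=(\Phi_1,0)$. Because $\Phi_1^*$ appears only in the $(11)$ and $(21)$ slots of $\End E$, the six equations decouple into four mutually independent subsystems --- one each for $a_{12}$ alone, $a_{22}$ alone, the coupled pair $(a_{11},\varphi_1)$, and the coupled pair $(a_{21},\varphi_2)$ --- yielding an orthogonal direct sum decomposition
$$
\Hcal^1(\Ccal_{(A,\Phi)}) \;=\; \Hcal^1_{12}\oplus\Hcal^1_{22}\oplus\Hcal^1_{11}\oplus\Hcal^1_{21}.
$$
The final summand $\Hcal^1_{21}$ coincides with the image $k(\Scal^{neg.}_{(A,\Phi)})$: for $(a,\varphi)\in\Scal^{neg.}_{(A,\Phi)}$ one has $a\varphi = a_{21}\varphi_1 + a_{21}\varphi_2 = 0$ (the first summand vanishes because $\varphi_1=0$, and the second because $a_{21}\colon L_1\to L_2$ annihilates $\varphi_2\in L_2$), so the Kuranishi correction $D_2^{*}G_2(a\varphi)$ is identically zero and $k$ restricts to the identity on $\Scal^{neg.}_{(A,\Phi)}$.

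Now define the linear deformation retraction $R_t$ on $\Hcal^1(\Ccal_{(A,\Phi)})$ by scaling the first three summands by $(1-t)$ and fixing $\Hcal^1_{21}$ pointwise, and set $H_t = k^{-1}\circ R_t\circ k$ on the slice. Then $H_0=\id$, $H_1$ takes values in $\Scal^{neg.}_{(A,\Phi)}$, and $H_t$ fixes $\Scal^{neg.}_{(A,\Phi)}$ pointwise for every $t\in[0,1]$. Equivariance is immediate: the unitary stabilizer of $(A,\Phi)$ is the $\U(1)$-subgroup $\diag(1,\mu)$ (using $\Phi_1\neq 0$), under which each $\Hcal^1_{ij}$ is a weight space, so $R_t$ and hence $H_t$ commute with this action. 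For the restricted retraction, after possibly shrinking neighborhoods the condition $(a_{21},\varphi_2)\neq 0$ on $\Scal_{(A,\Phi)}$ corresponds under the homeomorphism $k$ to nonvanishing of the $\Hcal^1_{21}$-component, a condition preserved by $R_t$ throughout $t\in[0,1]$ and realized at $t=1$. The main technical point is verifying that the Kuranishi correction genuinely vanishes on $\Scal^{neg.}_{(A,\Phi)}$ (i.e., the product computation $a\varphi=0$ above); once this yields the clean identification $k(\Scal^{neg.}_{(A,\Phi)})=\Hcal^1_{21}$, the decoupling of \eqref{eqn:h1} into four blocks is immediate, and pulling the retraction back through $k$ is routine.
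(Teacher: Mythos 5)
Your proof is correct and follows essentially the same route as the paper: identify the slice with $\Hcal^1(\Ccal_{(A,\Phi)})$ via the Kuranishi map (using $\Hcal^2=0$ from Lemma~\ref{lem:h2}), then retract by linearly scaling every component except $(a_{21},\varphi_2)$ to zero, checking that this preserves the harmonic equations \eqref{eqn:h1}. Your additional observations --- that $a\varphi\equiv 0$ on $\Scal^{neg.}_{(A,\Phi)}$ so $k$ is the identity there, and that equivariance follows from the weight-space decomposition under the $\U(1)$ stabilizer --- are correct fillings-in of details the paper leaves implicit (modulo the cosmetic slip of writing an extra ``term'' $a_{21}\varphi_2$, which isn't formed; $a\varphi=a_{21}\varphi_1$ already vanishes since $\varphi_1=0$).
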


\begin{proof}
By Lemma \ref{lem:h2} and Proposition \ref{prop:kuranishi}, the Kuranishi map gives a homeomorphism of the slice with $\Hcal^1(\Ccal_{(A,\Phi)})$. 
Hence, it suffices to  define the retraction there.  For this we take
 $$
 r_t(a_{11}, a_{12}, a_{21}, a_{22};\varphi_1, \varphi_2)=(ta_{11}, ta_{12}, a_{21}, ta_{22};t\varphi_1, \varphi_2) \, ,\, t\in [0,1]
 $$
 Notice that this preserves the equations in \eqref{eqn:h1}.
\end{proof}

Second, near minimal Yang-Mills connections, we find a similar retraction under the assumption that $\Hcal^2(\mathcal{C}_{(A, \Phi)})$ vanishes.
\begin{lemma} \label{lem:retraction2}
Suppose $d>4g-4$ and $A$ is semistable.  Let 
\begin{align*}
\Scal_{(A,0)}^{neg.}&=\{ (a,\varphi)\in \Scal_{(A,0)} : a=0\} \\
\Scal_{(A,0)}'&=\{ (a,\varphi)\in \Scal_{(A,0)} : \varphi\neq0\}
\end{align*}
Then there is an equivariant deformation retraction $\Scal_{(A,0)}^{neg.}
\hookrightarrow \Scal_{(A,0)}$ 
which restricts to a deformation retraction $\Scal_{(A,0)}^{neg.}
\setminus\{0\}\hookrightarrow \Scal_{(A,0)}'$.
\end{lemma}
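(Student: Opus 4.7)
My plan is to follow the template of the proof of Lemma \ref{lem:retraction1}: use the Kuranishi map to identify the slice with a linear model, write down an explicit scaling retraction there, and pull it back. The only substantial new input beyond Lemma \ref{lem:retraction1} is the vanishing of $\Hcal^2(\Ccal_{(A,0)})$, which is what makes the Kuranishi map a homeomorphism onto $\Hcal^1(\Ccal_{(A,0)})$.

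First I would establish $\Hcal^2(\Ccal_{(A,0)}) = 0$. By Lemma \ref{lem:h2} it suffices to show $H^1(E) = 0$. Since $A$ is semistable, so is $K \otimes E^*$, which has slope $2g-2-d/2 < 0$ exactly because $d > 4g-4$; hence $H^0(K \otimes E^*) = 0$, and Serre duality then gives $H^1(E) = 0$. With this vanishing in hand, Proposition \ref{prop:kuranishi} supplies an equivariant local homeomorphism $k : \Scal_{(A,0)} \to \Hcal^1(\Ccal_{(A,0)})$. At $\Phi = 0$ the harmonic equations \eqref{eqn:h1} collapse to $(d_A'')^* a = 0$ and $d_A'' \varphi = 0$, yielding a stabilizer-invariant direct sum decomposition
\[
\Hcal^1(\Ccal_{(A,0)}) \cong H^1(M, \End E) \oplus H^0(M, E),
\]
on which the stabilizer of $(A,0)$ acts linearly in each factor.

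On the harmonic side I would then define the retraction by $r_t(a,\varphi) = (ta, \varphi)$ for $t \in [0,1]$. It respects the splitting, agrees with the identity at $t = 1$, lands in $\{a = 0\}$ at $t = 0$, leaves $\varphi$ untouched, and commutes with the linear stabilizer action. Pulling back via $k$ yields the desired equivariant deformation retraction $\Scal_{(A,0)} \to \Scal_{(A,0)}^{neg.}$; the restriction to $\Scal_{(A,0)}'$ preserves the condition $\varphi \neq 0$ for all $t$, producing the second retraction onto $\Scal_{(A,0)}^{neg.} \setminus \{0\}$.

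The main obstacle is really the vanishing of $\Hcal^2$: this is precisely where $d > 4g-4$ is essential, since otherwise the Kuranishi model for the slice is no longer linear but cut out by a nontrivial obstruction equation, and no naive scaling retraction survives. This degree restriction is exactly what motivates the twisting argument of Section \ref{sec:low_degree}.
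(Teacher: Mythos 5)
Your argument is correct, and the decisive input --- $H^1(E)=0$, obtained from semistability of $E^\ast\otimes K_M$ together with $\deg(E^\ast\otimes K_M)=4g-4-d<0$ --- is exactly the one the paper uses. The mechanism of the retraction, however, is genuinely different. The paper does \emph{not} pass to the Kuranishi model here: since the slice condition at $(A,0)$ reads $d_{A+a}''\varphi=0$, naive scaling of $a$ alone would leave the slice, so the paper observes instead that $\dim\ker d_{A+a}''$ is locally constant (again because $H^1=0$), whence the harmonic projections ${\mathcal H}_{a}$ onto $\ker d_{A+a}''$ form a continuous family, and it sets $r_t(a,\varphi)=(ta,{\mathcal H}_{ta}\varphi)$ directly on the slice. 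You instead transplant the template of Lemma \ref{lem:retraction1}: linearize via the Kuranishi homeomorphism $k:\Scal_{(A,0)}\to\Hcal^1(\Ccal_{(A,0)})$ (legitimate by Proposition \ref{prop:kuranishi} once $\Hcal^2=0$) and scale the $a$-factor there. This works, but two small points deserve to be made explicit. First, the harmonic equations you want are $(d_A'')^\ast a=0$ and $d_A''\varphi=0$, coming from the definition of $\Hcal^1$ at $\Phi\equiv 0$; equations \eqref{eqn:h1} are written for the split critical sets with $\Phi_1\neq 0$. Second, for the restricted retraction $\Scal_{(A,0)}'\to\Scal_{(A,0)}^{neg.}\setminus\{0\}$ you need that $k$ carries $\Scal_{(A,0)}^{neg.}$ and $\Scal_{(A,0)}'$ onto the loci $\{a=0\}$ and $\{\varphi\neq 0\}$ of $\Hcal^1$ near the origin; this holds because at $\Phi=0$ one has $k(a,\varphi)=(a,\varphi+(d_A'')^\ast G_2(a\varphi))$, which preserves the $a$-component and, for $a$ small, the (non)vanishing of $\varphi$. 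In the end the two routes are close to dual: yours makes linearity and equivariance manifest at the cost of tracking the subsets through $k$, while the paper's avoids $k$ at the cost of the continuity argument for ${\mathcal H}_{ta}$.
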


\begin{proof}  
Let $E$ be the holomorphic bundle given by $A$.  Since $E$ is semistable, so is $E^\ast\otimes K_M$, where $K_M$ is the canonical bundle of $M$.  On the other hand, by 
the assumption, $\deg(E^\ast\otimes K_M)=4g-4-\deg E<0$.  Hence, by Serre duality,  $H^1(E)
\simeq H^0(E^\ast\otimes K_M)^\ast=\{0\}$.
Given $a$, let ${\mathcal H}_a$ denote harmonic projection to $\ker d_{A+a}''$.  
It follows that
 for $a$ in a small neighborhood of the origin in the slice, ${\mathcal H}_a$ is a continuous family.  We can therefore define the deformation retraction explicitly by
 $$
 r_t(a,\varphi)= (ta, {\mathcal H}_{ta}(\varphi))\, ,\, t\in [0,1]
 $$
 For a sufficiently small neighborhood of the origin in the slice, this preserves the set 
$\Scal_{(A,0)}'$.  It is also clearly equivariant.
\end{proof}

\section{Morse theory}

\subsection{The $\tau$-vortex equations}\label{subsec:vortex-eqns}

Let $\mu(A, \Phi) = *F_A - i  \Phi \Phi^*$. 
Then $*\mu$ is a moment map for the action of $\mathcal{G}$ on 
$\mathcal{B} \subset \mathcal{A} \times \Omega^0(E)$. 
Let $\tau > 0$ be a positive parameter and define the Yang-Mills-Higgs functional 
\begin{equation}
f_\tau(A, \Phi) = \| \mu + i \tau \cdot \id \|^2 
\end{equation}
Solutions to the  \emph{$\tau$-vortex equations} are the absolute minima of $f_\tau$:
\begin{equation}\label{eqn:tau-vortex}
\mu(A, \Phi) + i \tau \cdot \id = 0 
\end{equation}

\begin{theorem}[Bradlow \cite{Bradlow91}]\label{thm:Hitchin-Kobayashi}
$
\M_{\tau, d} = \left\{ (A, \Phi) \in \mathcal{B} \, : \, \mu(A, \Phi) + i \tau \cdot \id = 0 \right\} / \mathcal{G}
$.
\end{theorem}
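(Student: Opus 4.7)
My plan is to prove this as an instance of an infinite-dimensional Kempf-Ness theorem, using the moment map interpretation already noted in the excerpt: $*\mu$ is a moment map for the $\G$-action on $\B$, so solutions of the $\tau$-vortex equation are precisely zeros of the shifted moment map $\mu + i\tau\cdot\id$, and the claim reduces to showing that each $\G^\C$-orbit of a $\tau$-polystable pair meets this zero set in a unique $\G$-orbit, while $\G^\C$-orbits of strictly $\tau$-semistable pairs degenerate to the polystable representative in their $s$-equivalence class. Uniqueness modulo $\G$ will follow from the standard argument that $f_\tau$ is convex along geodesics in the space of hermitian metrics, so two minima in the same $\G^\C$-orbit differ by a unitary transformation.

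The easy direction (vortex solution $\Rightarrow$ $\tau$-semistability) is a Chern-Weil calculation. Given a solution $(A,\Phi)$ and any holomorphic line subbundle $L\subset E$, the induced metric gives a second fundamental form $\beta\in\Omega^{0,1}(L^{-1}\otimes L^\perp)$ with $F_L=(F_A)|_L-\beta\wedge\beta^*$; integrating and using the vortex equation produces $\deg L\leq \tau$, i.e.\ $\mu_+(E)\leq\tau$. If $\Phi\not\equiv 0$, pairing the $\Phi\Phi^*$ term against the line $L_\Phi$ generated by $\Phi$ and using $d_A''\Phi=0$ together with a divisor computation gives $\tau\leq d-\deg\Phi=\mu_-(E,\Phi)$, which is the other inequality of Definition \ref{def:taustable}.

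The hard direction is the existence of a vortex solution on each polystable $\G^\C$-orbit. I would parametrize complex gauge transformations on a fixed holomorphic pair $(d_A'',\Phi)$ by hermitian endomorphisms $s$ (so the varied metric is $h_0 e^s$), and consider the Donaldson-type functional $\Psi(s)$ whose Euler-Lagrange equation is the $\tau$-vortex equation. Then I would (i) show $\Psi$ is convex along linear paths $s_t=ts$, with strict convexity transverse to the stabilizer of $(A,\Phi)$; (ii) show that $\tau$-stability forces the linear term in the asymptotic expansion of $\Psi(ts)$ as $t\to\infty$ to be strictly positive for every nonzero $s$ commuting with $\Phi$ (this is the coercivity step linking analysis to the slope conditions of Definition \ref{def:taustable}); (iii) apply the direct method to produce a weak minimizer with $\sup|s|$ bounded; (iv) bootstrap using elliptic regularity applied to the vortex equation, which is elliptic modulo gauge, to obtain a smooth solution. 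The polystable (rather than stable) case requires splitting $E$ according to the stabilizer of the minimizer and solving on each factor.

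The main obstacle is step (ii), the $C^0$ estimate on a minimizing sequence. Following Uhlenbeck-Yau and Simpson, if $\sup|s_n|\to\infty$ along a minimizer, one normalizes $u_n=s_n/\sup|s_n|$ and extracts a weak $L_1^2$ limit $u_\infty$; the spectral decomposition of $u_\infty$ produces a weakly holomorphic subbundle that destabilizes $E$. For pairs one must additionally analyze whether $\Phi$ lies in the destabilizing subsheaf or projects nontrivially to the quotient, and verify that in each case the resulting inequality contradicts the relevant half of Definition \ref{def:taustable}; that is where the precise value of $\tau$ and both inequalities defining $\tau$-stability enter. Once this contradiction is obtained, the uniform $C^0$ bound and the completion of the argument follow the standard Hitchin-Kobayashi template essentially verbatim.
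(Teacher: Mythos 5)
The paper does not prove this statement at all: Theorem \ref{thm:Hitchin-Kobayashi} is quoted verbatim from Bradlow \cite{Bradlow91}, so there is no internal proof to compare against. Your outline is the standard Hitchin--Kobayashi/Kempf--Ness route for pairs, and it is essentially the strategy of the cited reference (a Donaldson-type functional on metrics, convexity for uniqueness, coercivity from stability via the Uhlenbeck--Yau weakly holomorphic subsheaf argument, adapted so that the destabilizing object is tested against both inequalities of Definition \ref{def:taustable} according to whether $\Phi$ lies in the subsheaf or projects nontrivially to the quotient). Your ``easy direction'' is correct and can be made completely precise: restricting the vortex equation to a line subbundle $L$ gives $\deg L \le \tau - \tfrac{1}{2\pi}\|\pi_L\Phi\|^2 \le \tau$, and applying the dual computation to the quotient $E/L_\Phi$, where $\pi_{L_\Phi^\perp}\Phi\equiv 0$ pointwise because $\Phi$ is a section of $L_\Phi$, gives $d-\deg\Phi=\deg(E/L_\Phi)\ge\tau$. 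Be aware, though, that your hard direction is a roadmap rather than a proof: steps (ii)--(iv) are exactly where all the work lives, and ``follow the standard template essentially verbatim'' conceals the one genuinely pair-specific point, namely that the limit endomorphism $u_\infty$ need not commute with $\Phi$, so the case analysis on where $\Phi$ sits relative to the destabilizing subsheaf must be carried out to extract the correct $\tau$-dependent contradiction. Since the result is classical and correctly attributed, deferring to \cite{Bradlow91} for those details is reasonable, but as written your argument is a correct sketch, not a complete proof.
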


If the space of solutions to the $\tau$-vortex equations is nonempty, then $\tau$ must satisfy the following restriction.
\begin{align}\label{eqn:tau-restriction}
\begin{split}
\mu + i \tau \cdot \id = * F_A - i \Phi \Phi^* &+ i \tau \cdot \id  = 0 \\
\Longrightarrow \quad \frac{i}{2 \pi} \int_M \tr( * F_A - i \Phi \Phi^* )  = 2 \tau \
&\Longleftrightarrow \quad \deg E + \| \Phi \|^2  = 2\tau  
\end{split}
\end{align}
Therefore $2\tau \geq d$ (with strict inequality if we want to ensure that $\Phi \neq 0$). 
Theorem 2.1.6 of \cite{Bradlow91} shows that a solution
 to the $\tau$-vortex equations which is not $\tau$-stable must split. 
Moreover, since $\rank E = 2$  the solutions can only split if 
$\tau$ is an integer. In particular, 
for a generic choice of $\tau$ solutions to \eqref{eqn:tau-vortex} must be $\tau$-stable.
In general, 
critical sets of
  $f_\tau$ can be characterized in terms of a decomposition of the holomorphic structure of $E$.
The critical point equations for the functional $f_\tau$ are
\begin{align}
d_A'' \left( \mu + i \tau \cdot \id \right) & = 0 \label{eqn:a-critical} \\
\left( \mu + i \tau \cdot \id \right) \Phi  & = 0 \label{eqn:phi-critical}
\end{align}
There are three different types of critical points.

\begin{enumerate}
\item[({\bf 0})]  Absolute minimum $f_\tau^{-1}(0)$.
\item[({\bf I})] \label{item:yang_mills}  Yang-Mills connections with
 $\Phi = 0$.  Then either
$A$ is an irreducible Yang-Mills minimum or $E$ splits holomorphically 
as $E = L_1 \oplus L_2$. The latter exist for all values of $\deg L_1 \geq d/2$ 
and the existence of the critical points is independent of the choice of 
$\tau$. However, as shown below the Morse index does depend on $\tau$.   
If $E$ is semistable (resp.\ $\deg L_1<\tau$) we call this a critical point 
of type $\Ia$, and we label it $\delta=\tau-d/2$ (resp.\ $\delta=\deg L_1-d+\tau$). 
 If $\deg L_1>\tau$ it is of type $\Ib$, and set $\delta=\deg L_1-d+\tau$.
\item[({\bf II})] \label{item:split-vortex} $E$ splits holomorphically as $E =L_1 \oplus L_2$, and $\Phi \in H^0(L_1) \setminus \{ 0\}$. On $L_1$ we have 
$$
*F_{A_1} - i \Phi \Phi^* = -i\tau \ ,\
\| \Phi \|^2 = 2\pi(\tau- \deg L_1)
$$
Therefore $\deg L_1< \tau$.  
Further subdivide these depending upon $\deg L_1$.  
\begin{enumerate}
\item[($\IIminus$)] $\deg L_1\leq d-\tau$, $\delta=d-\deg  L_1-\tau $;
\item[($\IIplus$)] $d-\tau<\deg  L_1<\tau$,  $\delta=\deg L_1-d+\tau$;
\end{enumerate}

\end{enumerate}

Let $S^dM$ denote the $d$-th symmetric product of the Riemann surface $M$, and $J_d(M)$ the Jacobian variety of degree $d$ line bundles on $M$.
For future reference we record the following
\begin{proposition} \label{prop:critical_set_cohomology}
For $\delta\in \Delta_{\tau,d}\setminus\{0\}$,
$$
H^\ast_\G(\eta_\delta)=\begin{cases}
H^\ast_\G(\A_{ss})&\text{ Type {\bf I}, $\delta=\tau-d/2$} \\
H^\ast(J_{j(\delta)}(M)\times J_{d-j(\delta)}(M))\otimes H^\ast(BU(1)\times BU(1)) &\text{ Type {\bf I}, $\delta\neq\tau-d/2$} \\
H^\ast(S^{j(\delta)}M\times J_{d-j(\delta)}(M))\otimes H^\ast (BU(1))&\text{ Type} \, \, \IIplus \\
H^\ast(S^{d-j(\delta)}M\times J_{j(\delta)}(M))\otimes H^\ast (BU(1))&\text{ Type} \, \, \IIminus
\end{cases}
$$
\end{proposition}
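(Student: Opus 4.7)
The plan is to proceed case-by-case through the four classes of critical points described in Section~3.1. For Type~{\bf I} with $\delta = \tau - d/2$, the critical set is tautologically $\A_{ss}\times\{0\}$, so the first line of the proposition is immediate. For Type~{\bf I} with $\delta \neq \tau - d/2$, the critical points are split Yang-Mills connections $(A_1\oplus A_2, 0)$ on a smooth splitting $E = L_1\oplus L_2$ with $\deg L_1 = j(\delta)$ and $\deg L_2 = d-j(\delta)$. Since $j(\delta)\neq d/2$, $L_1\not\simeq L_2$ smoothly, so the $\G$-stabilizer of any split point is the torus $U(1)\times U(1)$ of diagonal constant gauge transformations. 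Reducing to a fixed smooth splitting, $H^{\ast}_{\G}(\eta_\delta)$ identifies with the $\G(L_1)\times\G(L_2)$-equivariant cohomology of $\A^{\mathrm{YM}}(L_1)\times\A^{\mathrm{YM}}(L_2)$, which by K\"unneth is a tensor product of two factors, each of the form $H^{\ast}(J_{\deg L_i}(M))\otimes H^{\ast}(BU(1))$ by the Atiyah-Bott equivariant computation for line bundle connections: the Jacobian comes from the moduli of central unitary connections on $L_i$, and the $BU(1)$ from the constant stabilizer.

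For Type~{\bf II} critical points, $\eta_\delta$ consists of pairs $(A_1\oplus A_2, \Phi_1\oplus 0)$ with $(A_1, \Phi_1)$ a $\tau$-vortex solution on $L_1$ having $\Phi_1\not\equiv 0$, and $A_2$ a Yang-Mills connection on $L_2$. The $L_2$ factor contributes $H^{\ast}(J_{\deg L_2}(M))\otimes H^{\ast}(BU(1))$ exactly as above. For the $L_1$ factor, I would invoke Bradlow's identification (from \cite{Bradlow91}) of the $\tau$-vortex moduli space on a line bundle of degree $\ell<\tau$ with the symmetric product $S^{\ell}M$ via the zero-divisor map $(A_1, \Phi_1)\mapsto (\Phi_1)_0$; since $\Phi_1\not\equiv 0$ the $\G(L_1)$-stabilizer at such a solution is trivial, so no $BU(1)$ factor appears from this side. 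Combining via K\"unneth with $\deg L_1 = j(\delta)$ in Type $\IIplus$ and $\deg L_1 = d - j(\delta)$ in Type $\IIminus$ produces the last two lines of the proposition.

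The main obstacle is bookkeeping. The convention \eqref{eqn:j} assigns $j(\delta) = \deg L_1$ for $\delta\in\Delta^+_{\tau,d}$ but $j(\delta) = \deg L_2$ for $\delta\in\Delta^-_{\tau,d}$; this convention, together with the fact that in Type~{\bf II} the section $\Phi_1$ always lives on $L_1$ (which plays opposite roles in the Harder-Narasimhan filtration in the two subcases), accounts for the exchange of arguments between the symmetric product and Jacobian factors in Types $\IIplus$ versus $\IIminus$. The geometric ingredients --- Atiyah-Bott for line bundle connections and Bradlow's description of abelian vortices --- are both standard, so once the critical sets are correctly identified the computation reduces to repeated applications of K\"unneth.
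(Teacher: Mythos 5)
Your argument is correct and is precisely the standard computation the paper invokes without proof: the identification $\eta_\delta\cong\G\times_{\G(L_1)\times\G(L_2)}\bigl(\cdot\bigr)$ reducing to a product of the Atiyah--Bott computation for line-bundle Yang--Mills connections and Bradlow's identification of abelian vortices with $S^{\ell}M$ (with trivial stabilizer on the $L_1$ factor since $\Phi_1\neq 0$), followed by K\"unneth. Your bookkeeping of $j(\delta)$ versus $\deg L_1$ in the $\IIplus$ and $\IIminus$ cases also matches the paper's conventions.
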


\subsection{The gradient flow} \label{sec:gradient}
Consider the negative gradient flow of the Yang-Mills-Higgs functional
$f_\tau$ defined on the space $\mathcal{B} \subset \mathcal{A} \times 
\Omega^0(E)$. Since the functional is very similar to that studied in
 \cite{Hong01},  we only sketch the details of the existence and 
convergence of the flow and focus on showing that the Morse stratification induced 
by the flow is equivalent to the Harder-Narasimhan stratification described 
in Section \ref{subsec:Harder-Narasimhan}.

The  gradient flow equations  are
\begin{equation}\label{eqn:grad-flow-eqns}
\frac{\partial A}{\partial t}  = 2*d_A (\mu + i \tau) \ ,\
\frac{\partial \Phi}{\partial t}  = -4i (\mu + i \tau) \Phi
\end{equation}

\begin{theorem}
The gradient flow of $f_\tau$ with initial conditions in 
$\mathcal{B}$ exists for all time and converges to a critical point of $f_\tau$ in the smooth topology.
\end{theorem}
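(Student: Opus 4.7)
The plan is to follow the template established by Donaldson for Yang--Mills and adapted by \cite{Hong01} in the Yang--Mills--Higgs setting, with the one-dimensional base simplifying many of the estimates.

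First, I would establish short-time existence using Donaldson's trick. The system \eqref{eqn:grad-flow-eqns} is degenerate parabolic because of the infinite-dimensional gauge symmetry, so I would append a de~Turck-type gauge-fixing term, schematically $-d_A\bigl(d_A^\ast(A - A_0)\bigr)$, to the right-hand side of the $A$-equation. The modified system is strictly parabolic on an appropriate Sobolev completion of $\B$; standard linear parabolic theory then yields short-time existence and smoothness. The solution to the original flow is recovered by conjugating by the time-dependent unitary gauge transformation generated by the gauge-fixing term.

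Next, for long-time existence I would derive uniform a priori bounds. Since $f_\tau$ is nonincreasing along the flow, $\|F_A\|_{L^2}$ and $\|\Phi\|_{L^4}$ are controlled. A Bochner-type identity gives a heat-type inequality for $|\Phi|^2$ of the form $(\partial_t + \Delta_A)|\Phi|^2 \le C(|\Phi|^2 + |\Phi|^4) - |d_A''\Phi|^2$, from which the maximum principle yields a uniform $C^0$ bound on $\Phi$. Combined with the $L^2$ curvature bound and Uhlenbeck compactness in dimension two, a bootstrap argument produces uniform smooth bounds modulo gauge on any bounded time interval, so the flow extends past every finite time.

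Finally, for convergence I would first note that $\int_0^\infty \|\partial_t(A,\Phi)\|_{L^2}^2\,dt < \infty$ because $f_\tau$ is bounded below; combined with the compactness from the previous step, this gives weak subsequential convergence, modulo gauge, to a critical point of $f_\tau$. To upgrade this to smooth convergence of the whole flow path (without passing to subsequences and without gauge-fixing ambiguity), I would apply a Lojasiewicz--Simon inequality for the real-analytic functional $f_\tau$ near its critical set, of the type used by R\aa{}de and adapted in \cite{Wilkin08} for Higgs bundles. This yields finite length of the trajectory in a suitable Sobolev norm and hence convergence in the smooth topology.

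The main obstacle is the Lojasiewicz--Simon step. It requires that the Hessian of $f_\tau$ at a critical point, restricted to a slice, be Fredholm with closed range. This follows from ellipticity of the deformation complex \eqref{eqn:deformation-complex} on the Riemann surface, together with real-analyticity of $f_\tau$, but the argument must be carried out carefully because the critical sets carry the singularity structure reflected in the Harder--Narasimhan stratification of Section~\ref{subsec:Harder-Narasimhan}; in particular the kernel of $D_1$ jumps along critical strata, so the slice decomposition and the associated Fredholm theory must be handled uniformly in the component of the critical set.
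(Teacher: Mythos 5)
Your proposal follows essentially the same route as the paper, which itself only sketches the argument: long-time existence via the Donaldson/Hong template (the paper emphasizes that the flow stays in a single $\G^\C$-orbit, which your gauge-fixed formulation also yields), subsequential smooth convergence from the Hong--Tian compactness results, and uniqueness of the limit via a Lojasiewicz--Simon inequality as in Simon, R\aa{}de, and \cite{Wilkin08}. Your closing caveat about the Fredholm/slice analysis near singular critical strata is exactly the point the paper delegates to the argument of \cite{Wilkin08}, so there is no substantive divergence.
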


A standard calculation (cf. \cite[Section 4]{Bradlow91}) shows that $f_\tau$ can be re-written as
\begin{equation}
f_\tau = \int_X \left( \left| F_A \right|^2 + \left| d_A' \Phi \right|^2 + \left| \Phi \Phi^* \right|^2 - 2 \tau \left| \Phi \right|^2 + \left| \tau \right|^2 \right) dvol + 4 \tau \deg E 
\end{equation}

This is very similar to the functional $\YMH$ studied in \cite{Hong01}, and 
the proof for existence of the flow for all positive time follows the same 
structure (which is in turn modeled on Donaldson's proof for the Yang-Mills functional 
in \cite{Donaldson85}), therefore we omit the details. An important part 
of the proof worth mentioning here is that the flow is generated by 
the action of $\mathcal{G}^\C$, i.e. for all $t \in [0, \infty)$ there 
exists $g(t) \in \mathcal{G}^\C$ such that the solution $(A(t), \Phi(t))$ 
to the flow equations \eqref{eqn:grad-flow-eqns} with initial condition $(A, \Phi)$ is 
given by $(A(t), \Phi(t)) = g(t) \cdot (A, \Phi)$.

To show that the gradient flow converges, one can use the results 
of Theorem B of \cite{HongTian04} (where again, the functional is not exactly 
the same as $f_\tau$, but it has the same structure and so the proof of 
convergence is similar). The statement of \cite[Theorem B]{HongTian04} only describes 
smooth convergence along a subsequence (since they also study the higher dimensional 
case where bubbling occurs), and to extend this to show that the limit is 
unique we use the Lojasiewicz inequality technique of \cite{Simon83} and \cite{Rade92}.
 The key estimate is contained in the following proposition.

\begin{proposition}
Let $(A_\infty, \Phi_\infty)$ be a critical point of $f_\tau$. Then there exist $\varepsilon_1 > 0$, a positive constant $C$, and $\theta \in \left(0, \frac{1}{2} \right)$, such that $\left\| (A, \Phi) - (A_\infty, \Phi_\infty) \right\|$ implies that
\begin{equation}
\left\| \grad f_\tau (A, \Phi) \right\|_{L^2} \geq C \left| f_\tau(A, \Phi) - f_\tau(A_\infty, \Phi_\infty) \right|^{1-\theta} 
\end{equation} 
\end{proposition}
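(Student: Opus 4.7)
The plan is to follow the classical Lojasiewicz--Simon approach of \cite{Simon83, Rade92}. The functional $f_\tau$ is a polynomial of degree four in $(A, \Phi)$ and is therefore real-analytic on the appropriate Sobolev completion of $\B$. The two obstacles to a direct application of Lojasiewicz's inequality for real-analytic functions are (i) the infinite-dimensionality of the configuration space, and (ii) the $\G$-invariance of $f_\tau$, which forces the Hessian of $f_\tau$ at $(A_\infty,\Phi_\infty)$ to have infinite-dimensional kernel along the gauge orbit.

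To handle (ii), I would first reduce to a slice. Both $f_\tau$ and $\|\grad f_\tau\|_{L^2}$ are $\G$-invariant, so by Proposition \ref{prop:slice} we may assume $(A, \Phi) \in \Scal_{(A_\infty, \Phi_\infty)}$, at the cost of comparing the slice norm with $\|(A, \Phi) - (A_\infty, \Phi_\infty)\|$ via a nearby gauge transformation. The Hessian $L$ of $f_\tau$ at $(A_\infty, \Phi_\infty)$, restricted to the slice, is a self-adjoint elliptic Fredholm operator of index zero with finite-dimensional kernel $K \subset \Scal_{(A_\infty, \Phi_\infty)}$; let $P$ denote $L^2$-projection onto $K$. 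To handle (i), I would then perform a Lyapunov--Schmidt reduction. The implicit function theorem applied to $(I-P)\grad f_\tau = 0$ yields a real-analytic map $\phi : U \subset K \to \Scal_{(A_\infty, \Phi_\infty)}$ with $\phi(0) = 0$, and the composition $F(z) := f_\tau(\phi(z))$ is a real-analytic function on the finite-dimensional vector space $K$. The classical Lojasiewicz inequality then yields $|\grad F(z)| \geq C |F(z) - F(0)|^{1-\theta}$ for some $\theta \in (0, 1/2)$ and $z$ in a neighborhood of $0$.

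Finally, I would combine the pieces. For a slice coordinate $x$ near $0$, decompose $x = \phi(Px) + w$ with $w$ in the $L^2$-orthogonal complement of the range of $\phi$ at $Px$. Coercivity of $L$ on that complement gives $\|\grad f_\tau(x)\|_{L^2} \geq c\|w\| - O(\|x\|^2)$, while Taylor expansion around $\phi(Px)$ gives $|f_\tau(x) - f_\tau(\phi(Px))| \leq C\|w\|^2$. Using $1-\theta < 1$, the quadratic term is dominated by $\|w\|$ to a lower power, and combining these bounds with the finite-dimensional estimate on $F$ yields the claimed inequality, possibly after shrinking the neighborhood and adjusting $\theta$ and $C$. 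The main technical obstacle is verifying the Fredholm property of $L$ on the slice: gauge invariance gives an infinite-dimensional kernel to the unrestricted Hessian, but imposing the Coulomb-type condition $\ker D_1^*$ removes this degeneracy, and the resulting operator on the slice is elliptic by the standard computation modeled on the treatment of the Yang--Mills functional in \cite{Rade92}. This argument is insensitive to whether $\Phi_\infty$ vanishes, and so covers all the critical points classified in Section \ref{subsec:vortex-eqns}.
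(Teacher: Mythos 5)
Your proposal is correct and follows essentially the same route as the paper, which omits the argument and refers to \cite{Wilkin08}: that proof is precisely the Lojasiewicz--Simon scheme of \cite{Simon83, Rade92} — real-analyticity of the quartic functional, reduction to a Coulomb-type slice to remove the gauge degeneracy, Lyapunov--Schmidt reduction to the finite-dimensional kernel of the elliptic Hessian, the classical Lojasiewicz inequality there, and recombination of the estimates. The only caveat is that the norm bookkeeping in the final step (the $L^2$ gradient versus the Sobolev norms needed for the implicit function theorem) requires the care taken in \cite{Rade92}, but your sketch is at the same level of detail as the reference the paper itself relies on.
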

\noindent
The proof is similar to that in \cite{Wilkin08}, and so is omitted. 

The rest of the proof of convergence then follows the analysis in \cite{Wilkin08} for Higgs bundles. The key result is the following proposition, which is the analog of \cite[Proposition 3.7]{Wilkin08} (see also \cite{Simon83} or \cite[Proposition 7.4]{Rade92}).
\begin{proposition}\label{prop:flow-below}
Each critical point $(A, \Phi)$ of $f_\tau$ has a neighborhood $U$ such that if $\left( A(t), \Phi(t) \right)$ is a solution of the gradient flow equations for $f_\tau$ and $\left( A(T), \Phi(T) \right) \in U$ for some $T$, then either $f_\tau \left( A(t), \Phi(t) \right) < f_\tau \left( A, \Phi \right)$ for some $t$, or $\left( A(t), \Phi(t) \right)$ converges to a critical point $(A', \Phi')$ such that $f_\tau (A', \Phi') = f_\tau(A, \Phi)$. Moreover, there exists $\varepsilon$ (depending on $U$) such that $\| (A', \Phi') - (A, \Phi) \| < \varepsilon$.
\end{proposition}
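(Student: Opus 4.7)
The plan is to prove Proposition \ref{prop:flow-below} by a standard Lojasiewicz--Simon argument, in the spirit of \cite{Simon83, Rade92, Wilkin08}, using as black box the Lojasiewicz inequality just stated. Fix a critical point $(A,\Phi)$ and let $U_{\varepsilon_1}$ be a ball of radius $\varepsilon_1$ (in a suitable Sobolev norm, e.g.\ $L^2_1$) on which the Lojasiewicz inequality $\|\grad f_\tau\|_{L^2}\geq C|f_\tau(\cdot)-f_\tau(A,\Phi)|^{1-\theta}$ holds. For a flow line $(A(t),\Phi(t))$ assume that at no time does $f_\tau(A(t),\Phi(t))$ drop below $f_\tau(A,\Phi)$; otherwise the first alternative in the statement occurs and there is nothing to prove. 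Set $h(t)=f_\tau(A(t),\Phi(t))-f_\tau(A,\Phi)\geq 0$; this is monotonically nonincreasing along the flow since $\tfrac{d}{dt}f_\tau=-\|\grad f_\tau\|_{L^2}^2$.

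The main computation is to use the Lojasiewicz inequality to bound the $L^2$ length of the flow path. As long as $(A(t),\Phi(t))\in U_{\varepsilon_1}$ and $h(t)>0$, we have
\[
-\frac{d}{dt}h(t)^\theta=\theta h(t)^{\theta-1}\|\grad f_\tau\|_{L^2}^2\geq \theta C\,\|\grad f_\tau\|_{L^2}=\theta C\,\bigl\|\tfrac{d}{dt}(A(t),\Phi(t))\bigr\|_{L^2},
\]
so integrating from $T$ to any later time gives a uniform bound
\[
\int_T^{t}\bigl\|\tfrac{d}{dt'}(A(t'),\Phi(t'))\bigr\|_{L^2}\,dt'\;\leq\;\frac{1}{\theta C}\,h(T)^\theta.
\]

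The next step is a trapping argument: by shrinking $U$ to a smaller ball $U$ of radius $\varepsilon<\varepsilon_1/2$, and using continuity of $f_\tau$ to further require $h(T)$ small enough that $(\theta C)^{-1}h(T)^\theta<\varepsilon_1/2$, the inequality above forces the flow line to remain inside $U_{\varepsilon_1}$ for all $t\geq T$. Hence the integral bound is valid on $[T,\infty)$, which shows $(A(t),\Phi(t))$ is Cauchy in $L^2$ and therefore converges in $L^2$; smooth convergence along a subsequence from the sub-convergence result (the analog of \cite[Theorem B]{HongTian04}) then upgrades this to smooth convergence of the full flow to a limit $(A',\Phi')$. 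Since $h$ is continuous and monotone, the limit satisfies $f_\tau(A',\Phi')=f_\tau(A,\Phi)+h_\infty$; but Proposition \ref{prop:slice} together with the fact that $\grad f_\tau$ vanishes at the limit forces $h_\infty=0$, i.e.\ $f_\tau(A',\Phi')=f_\tau(A,\Phi)$. The distance bound $\|(A',\Phi')-(A,\Phi)\|<\varepsilon$ then follows from the length estimate above by taking $h(T)$ correspondingly small, which is possible by continuity of $f_\tau$ after shrinking $U$.

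The main obstacle in this outline is the trapping step: one must ensure that the flow does not escape the Lojasiewicz neighborhood before converging, since outside $U_{\varepsilon_1}$ the key inequality may fail. This is handled by the standard iterative argument just sketched (choose $U$ and its diameter small compared with $\varepsilon_1$, then bootstrap on the length estimate). Modulo this subtlety and the Sobolev regularity needed to make $\grad f_\tau$ and the flow path norms match, the argument is a direct transcription of the Higgs-bundle version in \cite[Proposition 3.7]{Wilkin08}, with $f_\tau$ and its deformation complex \eqref{eqn:deformation-complex} replacing the ones used there; no essentially new analytic input is required.
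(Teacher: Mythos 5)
Your proposal is correct and is essentially the argument the paper itself intends: the paper omits the proof of this proposition, deferring to the Lojasiewicz--Simon scheme of \cite{Simon83}, \cite{Rade92} and \cite[Proposition 3.7]{Wilkin08}, which is exactly the length-estimate-plus-trapping argument you give. The only small slip is attributing the conclusion $h_\infty=0$ to Proposition \ref{prop:slice}; the right tool there is the Lojasiewicz inequality itself evaluated at the limit point (where the gradient vanishes), but this does not affect the argument.
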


The next step is the main result of this section: The Morse stratification
 induced by the gradient flow of $f_\tau$ is the same as the
 $\tau$-Harder-Narasimhan stratification described in
 Section \ref{subsec:Harder-Narasimhan}. First recall the Hitchin-Kobayashi correspondence from Theorem \ref{thm:Hitchin-Kobayashi}, and the distance-decreasing result from \cite{Hong01}, which can be re-stated as follows.

\begin{lemma}[Hong \cite{Hong01}]\label{lem:distance-decreasing}
Let $(A_1, \Phi_1)$ and $(A_2, \Phi_2)$ be two pairs related by an element $g \in \mathcal{G}^\C$. Then the distance between the $\mathcal{G}$-orbits of $(A_1(t), \Phi_1(t))$ and $(A_2(t), \Phi_2(t))$ is non-increasing along the flow.
\end{lemma}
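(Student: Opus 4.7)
The plan is to follow the distance-decreasing argument of Donaldson for the Yang-Mills flow, as adapted by Hong to the Yang-Mills-Higgs setting. The first ingredient is the observation, already invoked to establish long-time existence, that the flow is generated by the complex gauge group: there exist continuous families $g_1(t), g_2(t) \in \mathcal{G}^\C$ with $(A_i(t),\Phi_i(t)) = g_i(t)\cdot(A_i,\Phi_i)$ for $i=1,2$. Since $(A_2,\Phi_2) = g\cdot(A_1,\Phi_1)$ by hypothesis, the two flow trajectories remain in a common $\mathcal{G}^\C$-orbit for all $t$, related by the continuously varying element $h(t) = g_2(t)\, g\, g_1(t)^{-1} \in \mathcal{G}^\C$.

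Next I would use that the distance between the $\mathcal{G}$-orbits of two points of a common $\mathcal{G}^\C$-orbit is controlled by the self-adjoint part of the relative gauge transformation: writing the polar decomposition $h(t) = u(t)\, s(t)$ with $u(t)\in \mathcal{G}$ and $s(t)$ positive self-adjoint with respect to the fixed hermitian metric, the $L^2$-distance between $\mathcal{G}\cdot(A_1(t),\Phi_1(t))$ and $\mathcal{G}\cdot(A_2(t),\Phi_2(t))$ is the length of $\log s(t)$ in $\mathcal{G}^\C/\mathcal{G}$, viewed as a (formally) non-positively curved symmetric space. Monotonicity of the $\mathcal{G}$-orbit distance therefore reduces to a pointwise bound on $\tfrac{d}{dt}\|\log s(t)\|$.

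The core computation is then to differentiate $h(t)$ along the flow, using \eqref{eqn:grad-flow-eqns}. A direct calculation shows that $\dot h\, h^{-1}$ is governed by the difference of the moment map values $\mu(A_i(t),\Phi_i(t)) + i\tau\cdot\id$ at the two endpoints, transported to a common fiber by $h(t)$. This should be combined with the Donaldson-type convexity of the moment-map-squared $f_\tau$ along geodesics in the space of hermitian metrics, a Kempf-Ness type property which in this setting is essentially what underlies Bradlow's proof of Theorem \ref{thm:Hitchin-Kobayashi} in \cite{Bradlow91}. Convexity plus the fact that both endpoints are moving in the direction $-\nabla f_\tau$ yields that the second derivative of the squared distance is non-negative while its first derivative at $t=0$ is non-positive, forcing the distance itself to be non-increasing.

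The main obstacle in this program is the convexity statement for $f_\tau$ along orbits of $\mathcal{G}^\C$, since one must account for the $\Phi\Phi^*$ term in $\mu$ in addition to the curvature term; however, this is precisely the modification carried out by Hong in \cite{Hong01}, so in practice I would simply invoke that reference and note that the argument is an immediate adaptation of Donaldson's original proof to include the Higgs field.
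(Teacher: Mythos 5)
The paper does not actually prove this lemma: it is stated as a restatement of Hong's distance-decreasing result and the ``proof'' is the citation to \cite{Hong01}, so your final move of simply invoking that reference is consistent with what the authors themselves do. Your sketch of what lies behind the citation also has broadly the right shape (the two trajectories stay in a common $\G^\C$-orbit, one studies the relative gauge transformation $h(t)$ via its polar/metric part, and one runs a Donaldson-type monotonicity computation).

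However, the one step where you commit to a mechanism is wrong as stated. You argue that convexity yields ``the second derivative of the squared distance is non-negative while its first derivative at $t=0$ is non-positive, forcing the distance itself to be non-increasing.'' A function of $t$ that is convex in $t$ with non-positive derivative at $t=0$ can perfectly well increase later (e.g.\ $(t-1)^2$), so this implication fails. The convexity actually used in Donaldson's argument, and in Hong's adaptation, is convexity along \emph{geodesics of} $\G^\C/\G$ (i.e.\ in the space of hermitian metrics), not convexity in the time variable; combined with the first-variation formula for distance and the fact that both endpoints move by $-\grad f_\tau$, monotonicity of the gradient of a geodesically convex functional gives $\frac{d}{dt}\dist\leq 0$ at \emph{every} $t$, which is what forces the distance to be non-increasing. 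Concretely, Donaldson and Hong implement this via the parabolic maximum principle: for the relative metric part $h$ one sets $\sigma(h)=\tr h+\tr h^{-1}-2\rk E\geq 0$ and shows $(\partial_t+\Delta)\sigma\leq 0$, so that $\sup_M\sigma$ is non-increasing along the flow; the Higgs field contributes additional terms of the correct sign. (A secondary imprecision: the $L^2$-distance between the $\G$-orbits in $\B$ is controlled by, but not literally equal to, the length of $\log s(t)$ in $\G^\C/\G$.) If you intend to do more than cite \cite{Hong01}, the maximum-principle computation is the step you would need to reproduce.
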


Recall that the critical sets associated to each stratum are given in Section \ref{subsec:vortex-eqns}, and that the critical set associated to the stratum $\mathcal{B}_\delta$ is denoted $\eta_\delta$. Define $S_\delta \subset \mathcal{B}$ to be the subset of pairs that converge to a point in $C_\delta$ under the gradient flow of $f_\tau$.
The next lemma gives some standard results about the critical sets of $f_\tau$.

\begin{lemma}\label{lem:background-results}
\begin{enumerate}

\item The critical set $\eta_\delta$ is the minimum of the functional $f_\tau$ on the stratum $\mathcal{B}_\delta$.

\item \label{item:closure} The closure of each $\mathcal{G}^\C$ orbit in $\mathcal{B}_\delta$ intersects the critical set $\eta_\delta$.

\item There exists $\varepsilon > 0$ (depending on $\tau$) such that $(A, \Phi) \in \eta_\delta$ and $(A', \Phi') \in \eta_{\delta'}$ with $\delta \neq \delta'$ implies that $\| (A, \Phi) - (A', \Phi') \| \geq \varepsilon$.

\end{enumerate}
\end{lemma}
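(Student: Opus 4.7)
I would prove the three parts in sequence. For part (1), my plan is a Chern-Weil computation in the Atiyah-Bott style adapted to pairs. Given $(A,\Phi)\in\Bcal_\delta$, choose the smooth orthogonal splitting $E=L_1\oplus L_2$ with the line subbundles $L_1,L_2$ determined by the stratum, and write $\mu+i\tau\cdot\id$ in block form. The diagonal entries are controlled below by Jensen's inequality using $\int_M\ast F_{A_{ii}}=-2\pi\deg L_i$, while the off-diagonal entries contribute nonnegative $L^2$-terms involving the second fundamental form $\beta$ and the off-diagonal $\Phi$-components. Direct substitution at a split critical point using the $\tau$-vortex equation on the $L_1$-factor (type $\mathbf{II}$) or Yang-Mills on both factors (type $\mathbf{I}$) gives the explicit values $V_\delta:=f_\tau|_{\eta_\delta}=2\pi\delta^2$ for type $\mathbf{II}$ and $V_\delta=2\pi[(\tau-j(\delta))^2+(\tau-(d-j(\delta)))^2]$ for type $\mathbf{I}$. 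Combining these yields $f_\tau\geq V_\delta$ on $\Bcal_\delta$, with equality precisely when $\beta=0$, the off-diagonal $\Phi$-components vanish, and the diagonal connections satisfy the appropriate vortex or Yang-Mills equation -- that is, precisely on $\eta_\delta$.

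For part (2), the key input is that the gradient flow is generated by the $\G^\C$-action (Section~\ref{sec:gradient}): $(A(t),\Phi(t))=g(t)\cdot(A,\Phi)$ for some $g(t)\in\G^\C$, so the flow stays inside a single $\G^\C$-orbit. The convergence theorem then places the limit $(A_\infty,\Phi_\infty)$ in $\overline{\G^\C\cdot(A,\Phi)}$ and at a critical point of $f_\tau$. Since the Harder-Narasimhan stratification is $\G^\C$-invariant, the limit lies in $\overline{\Bcal_\delta}$, hence in some $\eta_{\delta'}$ with $\delta'\geq\delta$. Verifying $\delta'=\delta$ is the main technical obstacle of the lemma, and I would follow the strategy of \cite{Wilkin08}: identify the limit explicitly with the graded object of the HN filtration of $(E,\Phi)$ via the diagonal one-parameter subgroup $g_s=\diag(s,s^{-1})\in\G^\C$ adapted to the smooth splitting $E=L_1\oplus L_2$ (the limit $s\to 0$ kills the extension class), followed by an auxiliary vortex flow on the $L_1$-factor converging to a $\tau$-vortex solution. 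By construction the resulting graded object lies in $\eta_\delta\cap\overline{\G^\C\cdot(A,\Phi)}$.

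For part (3), both $V_\delta$ and the $L^2$-norm $\|\Phi\|^2_{L^2}$ (zero for type $\mathbf{I}$ and $2\pi(\tau-\deg L_1)$ for type $\mathbf{II}$) are constant on $\eta_\delta$ and take values in discrete sets separated by integer shifts in $j(\delta)$; together they distinguish distinct $\delta$. The uniform $\varepsilon>0$ then follows by a standard contradiction argument: if the conclusion failed, there would exist sequences $(A_n,\Phi_n)\in\eta_{\delta_n}$, $(A'_n,\Phi'_n)\in\eta_{\delta'_n}$ with $\delta_n\neq\delta'_n$ and $\|(A_n,\Phi_n)-(A'_n,\Phi'_n)\|\to 0$; after passing to a subsequence on which $\delta_n,\delta'_n$ stabilize (only finitely many labels occur below any bounded energy level $V_{\delta_n}$), continuity of $f_\tau$ and $\|\Phi\|_{L^2}$ in the Sobolev topology on $\Bcal$ would force $V_{\delta_n}=V_{\delta'_n}$ and $\|\Phi\|^2_{\eta_{\delta_n}}=\|\Phi\|^2_{\eta_{\delta'_n}}$, contradicting $\delta_n\neq\delta'_n$.
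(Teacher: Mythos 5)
Your parts (1) and (2) follow the same skeleton as the paper's (deliberately brief) proof, just with more detail. For (1), the paper invokes convexity of $\|\cdot\|^2$ on each extension class, checked explicitly for each of the types $\Ia$, $\Ib$, $\IIplus$, $\IIminus$; your Chern--Weil/Jensen computation with the explicit critical values $2\pi\delta^2$ (type ${\bf II}$) and $2\pi[(\tau-j(\delta))^2+(\tau-d+j(\delta))^2]$ (type ${\bf I}$) is exactly that check, and the values are correct. For (2), the paper's one-line argument is ``scale the extension class and apply Theorem \ref{thm:Hitchin-Kobayashi} to the graded object,'' which is precisely the second half of your paragraph; the first half, which routes through the gradient flow and its limit, is unnecessary and best omitted, since identifying flow limits with graded objects is what Theorem \ref{thm:morse_hn} later establishes \emph{using} this lemma, whereas the scaling argument already produces a point of $\eta_\delta$ in $\overline{\G^\C\cdot(A,\Phi)}$ directly. (One small care point there: for a type $\IIminus$ stratum the one-parameter subgroup must be chosen to kill the extension class \emph{and} the $L_2$-component of $\Phi$ while preserving the projection $\Phi_1$ up to a constant; $\diag(s,s^{-1})$ does this only after an overall constant rescaling, which is harmless since constants lie in $\G^\C$.)

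Part (3) is where you genuinely diverge from the paper, which argues via compactness of the critical sets modulo $\G$ and an explicit distance computation; you instead separate critical sets by the conserved quantities $f_\tau$ and $\|\Phi\|_{L^2}^2$. The invariants do distinguish distinct $\delta$ (type ${\bf I}$ sets by $f_\tau$, type ${\bf II}$ sets by $\|\Phi\|^2=2\pi(\tau-\deg L_1)$, and the two types by $\|\Phi\|^2=0$ versus $\|\Phi\|^2>0$), but your contradiction argument has a gap in establishing a \emph{single} $\varepsilon$ valid for all of the infinitely many pairs $(\delta,\delta')$. The step ``pass to a subsequence on which $\delta_n,\delta_n'$ stabilize (only finitely many labels occur below any bounded energy level)'' is circular: nothing bounds $f_\tau$ along the putative sequence, so the labels need not stabilize. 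In the unbounded regime the invariants lose their separating power in the relevant metric: for type ${\bf II}$ sets with $\deg L_1=k$ and $k-1$, the gap $\bigl|\sqrt{2\pi(\tau-k)}-\sqrt{2\pi(\tau-k+1)}\bigr|$ in the Lipschitz quantity $\|\Phi\|_{L^2}$ tends to $0$ as $k\to-\infty$, and $f_\tau$ itself is not uniformly continuous on unbounded sets, so closeness of $(A_n,\Phi_n)$ and $(A_n',\Phi_n')$ does not force $V_{\delta_n}-V_{\delta_n'}\to 0$. To close this you must either control the high-energy pairs separately (e.g., via the connection components and the fact that the two smooth splittings of $E$ have subbundles of different degrees), or observe that in every application of (3) in Proposition \ref{prop:grad-flow-results} one works below a fixed energy level, where only finitely many critical sets occur and the per-pair separation coming from compactness modulo $\G$ suffices.
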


\begin{proof}
Since these results are analogous to standard results for the Yang-Mills functional
 (see for example \cite{AtiyahBott83}, \cite{Daskal92}, or \cite{DaskalWentworth04}), 
and the proof for holomorphic pairs is similar,  we only sketch the idea of the proof here.
\begin{itemize}
\item
The first statement follows by noting that the convexity of the norm-square 
function $\| \cdot \|^2$ shows that the minimum of $f_\tau$ on each extension 
class occurs at a critical point. This can be checked explicitly for each 
of the types $\Ia$, $\Ib$, $\IIplus$, and $\IIminus$.
\item
To see the second statement, simply scale the extension class and apply Theorem \ref{thm:Hitchin-Kobayashi} (the Hitchin-Kobayashi correspondence) to the graded object of the filtration (cf. \cite[Theorem 3.10]{DaskalWentworth04} for the Yang-Mills case).
\item
The third statement can be checked by noting that (modulo the $\mathcal{G}$-action) the 
critical sets are compact, and then explicitly computing the 
distance between distinct critical sets.
\end{itemize}
\end{proof}

As a consequence we have

\begin{proposition}\label{prop:grad-flow-results}

\begin{enumerate}

\item \label{item:nbhd}  Each critical set $\eta_\delta$ has a neighborhood $V_\delta$ such that $V_\delta \cap \mathcal{B}_\delta \subset S_\delta$.

\item \label{item:group-invariance} $S_\delta \cap \mathcal{B}_\delta$ is $\mathcal{G}^\C$-invariant.

\end{enumerate}
\end{proposition}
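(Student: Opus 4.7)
The overall plan is to combine the local Lojasiewicz-type stability of Proposition \ref{prop:flow-below} with the three structural facts in Lemma \ref{lem:background-results} and the distance-decreasing property of Lemma \ref{lem:distance-decreasing}. The argument parallels the Yang-Mills case treated in \cite{AtiyahBott83, Daskal92, DaskalWentworth04}.

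For part \eqref{item:nbhd}, around each critical point $(A_c, \Phi_c) \in \eta_\delta$ I would invoke Proposition \ref{prop:flow-below} to obtain a neighborhood $U$ together with a constant $\varepsilon > 0$ such that any gradient flow entering $U$ either has $f_\tau$ drop strictly below $f_\tau(A_c, \Phi_c)$ or converges to a critical point at the same energy level and within distance $\varepsilon$. Shrinking $\varepsilon$ to lie below the critical-set separation constant from Lemma \ref{lem:background-results}(3) forces any such limit to lie in $\eta_\delta$. For initial data in $U \cap \mathcal{B}_\delta$ the flow remains in the $\mathcal{G}^\C$-orbit of $(A, \Phi)$, and hence in the $\mathcal{G}^\C$-invariant stratum $\mathcal{B}_\delta$; by Lemma \ref{lem:background-results}(1) the energy cannot fall below $f_\tau(\eta_\delta)$, which rules out the descending alternative and forces convergence to $\eta_\delta$. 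Setting $V_\delta$ to be the union of such neighborhoods as $(A_c, \Phi_c)$ ranges over $\eta_\delta$ completes \eqref{item:nbhd}.

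For part \eqref{item:group-invariance}, given $(A, \Phi) \in S_\delta \cap \mathcal{B}_\delta$ and $g \in \mathcal{G}^\C$, the $\mathcal{G}^\C$-invariance of $\mathcal{B}_\delta$ immediately gives $g \cdot (A, \Phi) \in \mathcal{B}_\delta$, and the convergence theorem yields a critical limit for its flow. To identify this limit as a point of $\eta_\delta$, I would join $1$ to $g$ by a continuous path $g_s$, $s \in [0, 1]$, in $\mathcal{G}^\C$ and study the family of flows $(A_s(t), \Phi_s(t))$ from $g_s \cdot (A, \Phi)$. Continuous dependence on initial conditions at finite time, combined with the fact that the $s=0$ flow eventually enters $V_\delta$, ensures that $(A_s(T), \Phi_s(T)) \in V_\delta$ for some large $T$ and all $s$ in a small interval around $0$; since each $(A_s(t), \Phi_s(t))$ lies in the $\mathcal{G}^\C$-orbit of $(A,\Phi)$, hence in $\mathcal{B}_\delta$, part \eqref{item:nbhd} shows that its flow converges to $\eta_\delta$ for these $s$. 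Lemma \ref{lem:distance-decreasing} and the $\varepsilon$-separation of critical sets (Lemma \ref{lem:background-results}(3)) then make the assignment sending $s$ to its terminal critical set locally constant on $[0, 1]$, and connectedness forces it to be constantly $\eta_\delta$. Evaluating at $s = 1$ yields $g \cdot (A, \Phi) \in S_\delta$.

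The principal technical obstacle will be making the local constancy in $s$ genuinely uniform: the time required for a flow to enter $V_\delta$ varies with the initial data, so the continuous-dependence argument must be combined with a version of Proposition \ref{prop:flow-below} that is uniform on compact families of nearby initial conditions. This is typically handled by exploiting compactness of $\eta_\delta$ modulo $\mathcal{G}$ (which holds for each of the types $\Ia$, $\Ib$, $\IIplus$, $\IIminus$, parameterized by Jacobians and symmetric products) to extract uniform Lojasiewicz constants, after which the bookkeeping goes through as in the Yang-Mills setting cited above.
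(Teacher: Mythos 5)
Your argument follows the paper's own proof almost step for step: part (1) is identical (Proposition \ref{prop:flow-below} plus the minimality of $f_\tau$ on $\eta_\delta \subset \mathcal{B}_\delta$, $\mathcal{G}^\C$-invariance of the stratum, and the separation of critical sets), and your path/local-constancy argument for part (2) is just the paper's proof that $Y_{(A,\Phi)} = \{ g : g\cdot(A,\Phi) \in S_\delta \cap \mathcal{B}_\delta\}$ is open and closed, recast along a one-parameter family.

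There is one genuine, if small, gap: you propose to ``join $1$ to $g$ by a continuous path $g_s$ in $\mathcal{G}^\C$,'' but $\mathcal{G}^\C$ is not connected here --- for a rank $2$ bundle over a surface of genus $g \geq 2$ one has $\pi_0(\mathcal{G}^\C) \cong \pi_0(\mathcal{G}) \cong H^1(M;\ZBbb)$ --- so no such path exists for general $g$. The paper disposes of this first: since the component groups of $\mathcal{G}^\C$ and $\mathcal{G}$ agree, the flow is $\mathcal{G}$-equivariant, and $\eta_\delta$ is $\mathcal{G}$-invariant, one may factor $g$ through a unitary gauge transformation and reduce to the identity component, after which your connectedness argument applies verbatim. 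You should add this reduction. On the other hand, the ``principal technical obstacle'' you flag at the end --- uniformity of the entry time into $V_\delta$ over the family $g_s\cdot(A,\Phi)$ --- is not actually an obstacle: Lemma \ref{lem:distance-decreasing} bounds the distance between the $\mathcal{G}$-orbits of two flows at \emph{every} time $t$ by their distance at $t=0$, so once the $s=0$ flow lies in a $\mathcal{G}$-invariant $V_\delta$ at some time $T$, nearby initial conditions are automatically in $V_\delta$ at the same time $T$, with no need for uniform Lojasiewicz constants or continuous dependence on compact time intervals. This is precisely how the paper establishes both openness and closedness of $Y_{(A,\Phi)}$.
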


\begin{proof}

Proposition \ref{prop:flow-below} implies that there exists a neighborhood $V_\delta$ of each critical set $\eta_\delta$ such that if $(A, \Phi) \in V_\delta$ then the flow with initial conditions $(A, \Phi)$ either flows below $\eta_\delta$, or converges to a critical point close to $\eta_\delta$. Since $f_\tau$ is minimized on each Harder-Narasimhan stratum $\mathcal{B}_\delta$ by the critical set $\eta_\delta$, the flow is generated by the action of $\mathcal{G}^\C$, and the strata $\mathcal{B}_\delta$ are $\mathcal{G}^\C$-invariant, then the first alternative cannot occur if $(A, \Phi) \in \mathcal{B}_\delta \cap V_\delta$. Since the critical sets are a finite distance apart, then (by shrinking $V_\delta$ if necessary) the limit must be contained in $\eta_\delta$. Therefore $V_\delta \cap \mathcal{B}_\delta \subset S_\delta$, which completes the proof of \eqref{item:nbhd}.

To prove \eqref{item:group-invariance}, for each pair $(A, \Phi) \in S_\delta \cap \mathcal{B}_\delta$, let $Y_{(A, \Phi)} = \left\{ g \in \mathcal{G}^\C \, : \, g \cdot (A, \Phi) \in S_\delta \cap \mathcal{B}_\delta \right\}$. The aim is to show that $Y_{(A, \Phi)} = \mathcal{G}^\C$. 
Firstly we note that since the group $\Gamma$ of components of $\mathcal{G}^\C$ is the same as that for the unitary gauge group $\mathcal{G}$, the flow equations \eqref{eqn:grad-flow-eqns} are $\mathcal{G}$-equivariant, and the critical sets $\eta_\delta$ are $\mathcal{G}$-invariant, then it is sufficient to consider the connected component of $\mathcal{G}^\C$ containing the identity. Therefore the problem reduces to showing that $Y_{(A, \Phi)}$ is open and closed. Openness follows from the continuity of the group action, the distance-decreasing result of Lemma \ref{lem:distance-decreasing}, and the result in part \eqref{item:nbhd}. Closedness follows by taking a sequence of points $\{ g_k \} \subset Y_{(A, \Phi)}$ that converges to some $g \in \mathcal{G}^\C$, and observing that the distance-decreasing result of Lemma \ref{lem:distance-decreasing} implies that the flow with initial conditions $g \cdot (A, \Phi)$ must converge to a limit close to the $\mathcal{G}$-orbit of the limit of the flow with initial conditions $g_k \cdot (A, \Phi)$ for some large $k$. Since the critical sets are $\mathcal{G}$-invariant, and critical sets of different types are a finite distance apart, then by taking $k$ large enough (so that $g_k \cdot (A, \Phi)$ is close enough to $g \cdot (A, \Phi)$) we see that the limit of the flow with initial conditions $g \cdot (A, \Phi)$ must be in $\eta_\delta$. Therefore $Y_{(A, \Phi)}$ is both open and closed.
\end{proof}

\begin{theorem} \label{thm:morse_hn}
The Morse stratification by gradient flow is the same as the Harder-Narasimhan stratification in Definition \ref{def:hn_stratification}.
\end{theorem}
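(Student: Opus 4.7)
The plan is to establish equality of the two stratifications by showing that each algebraic or Yang-Mills piece of the $\tau$-Harder-Narasimhan stratification lands inside a single Morse stratum, and then noting that both give disjoint partitions of $\B$ indexed by the same set of critical components $\{\eta_\delta\}_{\delta\in \Delta_{\tau,d}}$.

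The first step is to prove $\B_\delta \subset S_\delta$ for every $\delta$. Given $(A,\Phi) \in \B_\delta$, the orbit-closure assertion of Lemma \ref{lem:background-results} produces a sequence $g_k \in \G^\C$ with $g_k \cdot (A,\Phi)$ converging to a point of $\eta_\delta$. Since the Harder-Narasimhan type of a pair is $\G^\C$-invariant, each $g_k \cdot (A,\Phi)$ still lies in $\B_\delta$, and for $k$ sufficiently large it enters the neighborhood $V_\delta$ from Proposition \ref{prop:grad-flow-results}, hence sits inside $V_\delta \cap \B_\delta \subset S_\delta$. The $\G^\C$-invariance of $S_\delta \cap \B_\delta$ supplied by Proposition \ref{prop:grad-flow-results} then forces $(A,\Phi) \in S_\delta$.

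Next, for $\delta \in \Delta^+_{\tau,d} \cap I_{\tau,d}$ the stratum $X_\delta \setminus X_\delta'$ also contains the embedded Yang-Mills stratum $\A_{j(\delta)}$ of pairs $(A,0)$. The flow equations \eqref{eqn:grad-flow-eqns} preserve $\Phi \equiv 0$, so for such initial data the flow reduces to the classical Yang-Mills gradient flow on $\A$. By the Morse picture on $\A$, every $A \in \A_{j(\delta)}$ converges to a split Yang-Mills connection on $L_1 \oplus L_2$ with $\deg L_1 = j(\delta)$, which is exactly the type \textbf{I} critical set $\eta_\delta$. Hence $\A_{j(\delta)} \subset S_\delta$.

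Combining, both families $\{S_\delta\}$ (disjoint by uniqueness of the flow limit) and $\{X_\delta \setminus X_\delta'\}$ (disjoint by construction) partition $\B$ and are indexed by $\Delta_{\tau,d}$ through the assignment $\delta \mapsto \eta_\delta$. The containments $X_\delta \setminus X_\delta' \subset S_\delta$ just established, together with the separation of critical sets from Lemma \ref{lem:background-results}, force equality $S_\delta = X_\delta \setminus X_\delta'$ for each $\delta$. I expect the main subtlety to be the bookkeeping for $\delta \in \Delta^+_{\tau,d} \cap I_{\tau,d}$: the same label $\delta$ indexes both the algebraic piece $\B_\delta$ (extensions with $\Phi$ a section of the destabilizing line subbundle) and the Yang-Mills piece $\A_{j(\delta)}$ with $\Phi \equiv 0$, reflecting the fact that these two $\G^\C$-invariant loci share the filtration datum $\deg L_1 = j(\delta)$ and collapse to the same critical component of $f_\tau$.
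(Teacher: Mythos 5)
Your proposal is correct and follows essentially the same route as the paper: show $\B_\delta\subseteq S_\delta$ by combining the orbit-closure statement of Lemma \ref{lem:background-results} with the neighborhood result and the $\G^\C$-invariance of $S_\delta\cap\B_\delta$ from Proposition \ref{prop:grad-flow-results}, then conclude equality because both collections partition $\B$; your explicit treatment of the $\A_{j(\delta)}$ components via the classical Yang--Mills flow is a point the paper leaves implicit. The only slip is notational: for $\delta\in\Delta^+_{\tau,d}\cap I_{\tau,d}$ the full Morse stratum is $X_\delta\setminus X_{\delta_1}=\B_\delta\cup\A_{j(\delta)}$ rather than $X_\delta\setminus X_\delta'=\A_{j(\delta)}$, but this does not affect the argument.
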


\begin{proof}
The goal is to show that $\mathcal{B}_\delta \subseteq S_\delta$ for each $\delta$. Let $x \in \mathcal{B}_\delta$. By Lemma \ref{lem:background-results} \eqref{item:closure} the closure of the orbit $\mathcal{G}^\C \cdot x$ intersects $\eta_\delta$, therefore there exists $g \in \mathcal{G}^\C$ such that $g \cdot x \in V_\delta \cap \mathcal{B}_\delta \subseteq S_\delta$ by Proposition \ref{prop:grad-flow-results} \eqref{item:nbhd}. Since $S_\delta \cap \mathcal{B}_\delta$ is $\mathcal{G}^\C$-invariant by Proposition \ref{prop:grad-flow-results} \eqref{item:group-invariance}, then $x \in \mathcal{B}_\delta \cap S_\delta$ also, and therefore $\mathcal{B}_\delta \subseteq S_\delta$. Since $\left\{ \mathcal{B}_\delta \right\}$ and $\left\{ S_\delta \right\}$ are both stratifications of $\mathcal{B}$, then we have $\mathcal{B}_\delta = S_\delta$ for all $\delta$.
\end{proof}

\begin{remark}  While we have identified the stable strata of the critical sets with the 
Harder-Narasimhan strata, the ordering on the set $\Delta_{\tau,d}$ coming from the values
of $\YMH$ is more complicated.  Since this will not affect the calculations, 
we continue to use the ordering already defined in Section 2.
\end{remark}

We may now reformulate the main result, Theorem \ref{thm:perfection}.  
The key idea is to define a substratification of $\left\{ X_\delta, X_\delta'
\right\}_{\delta\in \Delta_{\tau,d}}$
 by combining  $\B_\delta$ and
 $\A_{j(\delta)}$ for $\delta\in \Delta^+_{\tau,d}\cap I_{\tau,d}$.
In other words, this is simply $\{X_\delta\}_{\delta\in \Delta_{\tau,d}}$.
We call this the \emph{modified} Morse stratification.

\begin{theorem} \label{thm:perfect}
The modified Morse stratification
 $\{X_\delta\}_{\delta\in \Delta_{\tau, d}}$ is
 \emph{$\G$-equivariantly perfect} in the following sense:  For all 
$\delta\in \Delta_{\tau,d}$,  the long exact sequence

\begin{equation}
\cdots\lra H^\ast_\G(X_\delta, X_{\delta_1})\lra  H^\ast_\G(X_\delta)\lra H^\ast_\G(X_{\delta_1})\lra\cdots  \label{eqn:prime}
\end{equation}
splits.  Here, $\delta_1$ denotes the predecessor of $\delta $ in $\Delta_{\tau,d}$.
\end{theorem}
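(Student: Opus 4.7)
The plan is to follow an Atiyah--Bott style strategy adapted to the singular setting. For each $\delta\in \Delta_{\tau,d}$ in turn, I will reduce the splitting of the long exact sequence \eqref{eqn:prime} to the injectivity of
$$
H^\ast_\G(X_\delta, X_{\delta_1})\lra H^\ast_\G(X_\delta).
$$
By excision and the gradient flow results of Proposition \ref{prop:grad-flow-results}, the plan is to identify $H^\ast_\G(X_\delta, X_{\delta_1})$ with a Thom-type cohomology group supported on the critical set $\eta_\delta$ lying in $X_\delta\setminus X_{\delta_1}$, and then verify that the associated equivariant Euler class acts injectively on $H^\ast_\G(\eta_\delta)$. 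The latter step is carried out by inspecting the explicit description of $H^\ast_\G(\eta_\delta)$ given in Proposition \ref{prop:critical_set_cohomology}.

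For strata of Type $\IIplus$ and $\IIminus$, the critical set consists of split pairs with $\Phi_1\ne 0$. Lemma \ref{lem:retraction1} provides an equivariant deformation retraction of a slice neighborhood of $\eta_\delta$ onto the negative slice $\Scal^{neg.}_{(A,\Phi)}$, which in this case assembles into a genuine equivariant complex vector bundle $\nu_\delta\to\eta_\delta$. The classical equivariant Thom isomorphism then gives $H^\ast_\G(X_\delta, X_{\delta_1}) \cong H^{\ast-2\lambda_\delta}_\G(\eta_\delta)$, and the inclusion map into $H^\ast_\G(X_\delta)$ is cup product with the equivariant Euler class of $\nu_\delta$. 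Since the ring $H^\ast_\G(\eta_\delta)$ described in Proposition \ref{prop:critical_set_cohomology} is an integral domain tensored with a polynomial ring in $H^\ast(BU(1))$-generators, and this Euler class has a nontrivial top power in that polynomial factor, it is not a zero divisor. Type $\Ib$ is handled by the same template, noting that the deformations turning on $\Phi$ along $L_1$ are positive for $f_\tau$ when $\deg L_1>\tau$.

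The new phenomenon, and the main obstacle, is the combined stratum at $\delta\in \Delta^+_{\tau,d}\cap I_{\tau,d}$, where $X_\delta\setminus X_{\delta_1} = \Bcal_\delta\cup\A_{j(\delta)}$. Here $\A_{j(\delta)}$ is the $\Phi=0$ locus at Harder--Narasimhan type $j(\delta)$, and its normal direction in $\Bcal$ involves $H^0(E)$, whose dimension jumps along $\A_{j(\delta)}$, giving a normal \emph{cusp} rather than a bundle. It is exactly this singularity that prevents perfection of the pure Morse stratification. My plan is to decompose $X_\delta\setminus X_{\delta_1}$ into the $\Phi=0$ piece $\A_{j(\delta)}$ and the $\Phi\ne0$ piece $\Bcal_\delta$, and run a two-step argument using Mayer--Vietoris and excision. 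The $\Phi=0$ part retracts by scaling $\Phi$ onto $\A_{j(\delta)}$, to which the Atiyah--Bott analysis on $\A$ applies to yield perfection there; the $\Phi\ne0$ part deformation retracts via Lemma \ref{lem:retraction1} onto the Type $\IIplus$ component of $\eta_\delta$. The combined contribution matches what would be the Thom class of a smooth normal bundle, showing that the separate failures from the $\A_{j(\delta)}$ direction and from $\Bcal_\delta$ cancel exactly along the gluing.

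The hardest single case is the bottom combined stratum $\delta=\tau-d/2$, where $\A_{j(\delta)}=\A_{ss}$ and the critical set includes the whole moduli of semistable bundles. The cusp analysis here requires a Morse--Bott normal model at $\A_{ss}$, namely Lemma \ref{lem:retraction2}, which is only valid when $d>4g-4$. For such $d$ the previous paragraph's argument applies verbatim. For $d\le 4g-4$, the plan is to defer to Section \ref{sec:low_degree}: a sufficiently positive twist $E\mapsto\widetilde E$ yields an equivariant inclusion $\Bcal(E)\hookrightarrow\Bcal(\widetilde E)$ which is surjective on $\G$-equivariant cohomology (effectively the content of Theorem \ref{thm:macdonald}), and the splitting in low degree then descends from the already-established splitting in high degree.
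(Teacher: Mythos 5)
Your overall architecture (Thom isomorphism plus non-zero-divisor Euler class for the constant-rank strata; a decomposition argument at the singular combined strata; a twist into higher degree for the bottom stratum in low degree) matches the paper's, but there are two genuine gaps. The first is your treatment of type $\Ib$. You claim it is "handled by the same template" as ${\bf II}^{\pm}$, but the negative normal space at a type $\Ib$ critical point is $H^{0,1}(L_1^*L_2)\oplus H^0(L_2)$ (see \eqref{eqn:B1}), and for $\delta\in\Delta^+_{\tau,d}\cap[2\tau-d,\tau]$ one has $\deg L_2=d-j(\delta)\ge 0$, so $h^0(L_2)$ jumps along the critical set and $\nu_\delta$ is \emph{not} a vector bundle; Corollary \ref{cor:smooth} explicitly excludes this range. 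Your observation that the $\varphi_1$-directions are positive when $\deg L_1>\tau$ is correct but addresses the wrong component. These $\Ib$ strata require exactly the same auxiliary machinery as the combined strata: the paper introduces $X_\delta''=X_\delta\setminus\pr^{-1}(\A_{j(\delta)})$ and $\nu''_{I,\delta}$, proves Proposition \ref{prop:key} by identifying $(X_\delta,X_\delta'')$ with the Atiyah--Bott pair $(X^\A_{j(\delta)},X^\A_{j(\delta)-1})$ after scaling $\Phi$, and identifies $H^*_\G(X_{\delta_1},X_\delta'')$ with $H^*_\G(\omega_\delta,\nu''_{I,\delta})$ via a chain of excisions and a \emph{change of the stability parameter} to $\tau'=j(\delta)-\varepsilon$. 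That last step is the concrete mechanism behind your asserted "exact cancellation along the gluing," and it is missing from your sketch; injectivity of $H^*_\G(X_\delta,X_{\delta_1})\to H^*_\G(X_\delta)$ is then extracted from the commutative diagram \eqref{eqn:type-Ia-diagram} via injectivity of $\xi$ and surjectivity of $\xi''$, not from a Mayer--Vietoris sequence.

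The second gap is a circularity in the low-degree case. You invoke Theorem \ref{thm:macdonald} to get surjectivity of $H^*_\G(\B(\widetilde E))\to H^*_\G(\B(E))$, but that theorem is deduced \emph{from} Theorem \ref{thm:perfect} in the paper, so it cannot be used here. What the descent actually needs is the weaker, independently proved surjectivity of the restriction maps $H^*_\G(\widetilde X_\delta)\to H^*_\G(X_\delta)$ only for $\delta<\tau_{max}-d/2$ and for $X'_{\tau-d/2}$ (Lemmas \ref{lem:maxtau} and \ref{lem:delta}); these rest on the projective-bundle structure of $\M_{\tau_{max},d}$ and on the surjectivity of $H^*(S^{j+1}M)\to H^*(S^jM)$ for symmetric products, together with the unconditional splitting of Lemma \ref{lem:claim} for those strata. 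One then gets the bottom-stratum surjection in degree $d$ from the one in degree $d+2$ by a diagram chase (Lemma \ref{lem:induction}). Finally, you also omit the step of varying $\tau$: the twist sends $\tau_{max}$ for degree $d$ to a non-extremal parameter for degree $d+2$, so one must first propagate the bottom-stratum splitting from $\tau_{max}$ to all $\tau\in(d/2,d)$ (Lemma \ref{lem:anytau}, using the "flip" description of how $X_{\tau-d/2}$ changes across $C_d$) before the downward induction on degree closes.
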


\subsection{Negative normal spaces}  \label{sec:negative}

For critical points $(A,\Phi)\in \eta_\delta$, a tangent vector 
$$(a, \varphi) \in \Omega^{0,1}(\End E ) \oplus \Omega^0(E)$$
 is an eigenvector for the Hessian of $f_\tau$ if
\begin{align}
i [\mu + i \tau \cdot \id, a] & = \lambda a \label{eqn:aeigenvalue} \\ 
i (\mu + i \tau \cdot \id) \varphi & = \lambda \varphi  \label{eqn:phieigenvalue}
\end{align}
Let  $V_{(A,\Phi)}^{neg.}\subset\Omega^{0,1}(\End E ) \oplus \Omega^0(E)$ 
denote the span of all such $(a,\varphi)$ with $\lambda<0$. 
This is clearly $\G$-invariant, since $f_\tau$ is.
 Let $\Scal_{(A,\Phi)}$ be the slice at $(A,\Phi)$.  Then we set
 $\nu_\delta\cap \Scal_{(A,\Phi)}= V_{(A,\Phi)}^{neg.}\cap \Scal_{(A,\Phi)}$.
Using Proposition \ref{prop:slice}, this gives a well-defined 
$\G$-invariant subset $\nu_\delta\subset\B$, which we call
the \emph{negative normal space at $\eta_\delta$}.  
By definition, $\eta_\delta$ is a closed subset of $\nu_\delta$.

We next describe $\nu_\delta$ 
in detail for each of the critical sets:
\begin{itemize}
\item[($\Ia$)]
  Recall that in this case $\Phi\equiv 0$.  If $E$ semistable, the negative 
eigenspace of the Hessian is $H^0(E)$. To see this, note that since $\Phi 
= 0$ then $i(\mu + i \tau \cdot \id) = \left(d/2 - \tau \right) \cdot \id$ is a 
negative constant multiple of the identity (by assumption $\tau > d/2$). Therefore $i 
[\mu + i \tau \cdot \id, a] = 0$, and $a = 0$. Then the slice equations imply $\varphi \in 
H^0(E)$.
If $E=L_1\oplus L_2$, then ${\mathcal H}^2(\Ccal_{(A,0)})$ is nonzero in general.
From the slice equations, we see that the negative eigendirections $\nu_\delta$ of the Hessian are given by
\begin{equation}\label{eqn:B2}
d_{A_2}'' \varphi_2 + a_{21} \varphi_1 = 0\, ,\ (a_{21}, \varphi_1)\in H^{0,1}(L_1^* L_2) \oplus H^0(L_1)  
\end{equation}  
\item[($\Ib$)]
This is similar to the case above, except now for negative directions, $\varphi_1\equiv 0$.    We therefore conclude that $\nu_\delta$ is given by
\begin{equation}\label{eqn:B1}
H^{0,1}(L_1^* L_2) \oplus H^0(L_2)  
\end{equation}
Note that if $\delta>\tau$, then 
$\deg L_2=d-j(\delta)<0$, and so $\nu_\delta^-$ has constant dimension
$\dim_{\C}H^{0,1}(L_1^* L_2)=2j(\delta)-d+g-1$.
\item[($\IIplus$)] In this case, $\Phi\not\equiv 0$,
 so by Lemma \ref{lem:h2},  ${\mathcal H}^2(\Ccal_{(A,0)})=0$, and the slice is 
homeomorphic to ${\mathcal H}^1(\Ccal_{(A,0)})$ via the Kuranishi map.
 The negative eigenspace of the Hessian is then just
\begin{equation}\label{eqn:C1}
 (d_A'')^* a_{21} -\varphi_2 \Phi_1^* = 0 \, ,\ d_A'' \varphi_2 + a_{21} \Phi_1 = 0  
\end{equation} 
\item[($\IIminus$)] 
 This is similar to above, except now $\varphi_2\equiv 0$.  
Hence, the fiber of $\nu_\delta$ is given by
\begin{equation}\label{eqn:C2}
H^{0,1}(L_2^* L_1)
\end{equation}
Note that $\dim_{\C}H^{0,1}(L_2^* L_1)=2j(\delta)-d+g-1$.
\end{itemize}
To see ($\IIplus$) and ($\IIminus$), we need to compute the solutions to 
\eqref{eqn:aeigenvalue} and \eqref{eqn:phieigenvalue}, which involves knowing 
the value of $i(\mu + i\tau \cdot \id)$ on the critical set. 
Equation \eqref{eqn:a-critical} shows that 
\begin{equation*}
i (\mu + i \tau \cdot \id) = \left( \begin{matrix} \lambda_1 & 0 
\\ 0 & \lambda_2 \end{matrix} \right)
\end{equation*}
where $\lambda_1 \in \Omega^0(L_1^* L_1)$ and $\lambda_2 \in \Omega^0(L_2^* L_2)$ 
are constant. Since $\Phi \in H^0(L_1) \setminus \{0\}$, then \eqref{eqn:phi-critical} 
shows that $\lambda_1 = 0$. Since $\lambda_2$ is constant,  
the integral over $M$ becomes
$$
 \lambda_2 = \frac{1}{2\pi} \int_M \lambda_2 \, dvol
 = \frac{i}{2\pi} \int_M F_{A_2} - \frac{1}{2\pi} \int_M  \tau \, dvol
  = \deg L_2 -  \tau 
$$
Therefore, if $d - \tau  < \deg L_1 = d- \deg L_2$, then $\deg L_2 < \tau $ and so $\lambda_2$ is negative. Similarly, if $\deg L_1 < d - \tau $ then $\lambda_2$ is positive. 
Equation \eqref{eqn:aeigenvalue} then shows that $a \in \Omega^{0,1}(L_1^* L_2)$ if $d - \tau  < \deg L_1$, and $a \in \Omega^{0,1}(L_2^* L_1)$ if $\deg L_1 < d - \tau $. Similarly, if $d - \tau < \deg L_1$ then $\varphi \in \Omega^0(L_2)$, and if $\deg L_1 < d - \tau $ then $\varphi = 0$.
Equations \eqref{eqn:C1} and \eqref{eqn:C2} then follow from the slice equations.

The following lemma describes the space of solutions to \eqref{eqn:B2} when $\varphi_1$ is fixed.

\begin{lemma}\label{lem:B2-deformation}
Fix $\varphi_1$. When $\varphi_1 = 0$ then the space of solutions $\{(a_{21}, \varphi_2)\}$ to \eqref{eqn:B2} is isomorphic to $H^{0,1}(L_1^* L_2) \oplus H^0(L_2)$.
When $\varphi_1 \neq 0$ then the space of solutions $\{(a_{21}, \varphi_2)\}$ to \eqref{eqn:B2} has dimension $\deg L_1$.
\end{lemma}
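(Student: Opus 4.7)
The plan is to split the analysis of equation \eqref{eqn:B2} with $\varphi_1$ fixed into the two cases of the statement. When $\varphi_1 = 0$ the equation decouples: it reduces to $d_{A_2}'' \varphi_2 = 0$, which forces $\varphi_2 \in \ker d_{A_2}'' = H^0(L_2)$ while leaving $a_{21}$ unconstrained in $H^{0,1}(L_1^* L_2)$. This immediately yields the claimed isomorphism with $H^{0,1}(L_1^* L_2) \oplus H^0(L_2)$.

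For $\varphi_1 \neq 0$, I would reduce the parameter count to a cohomology calculation driven by the short exact sequence
$$
0 \to L_1^* L_2 \xrightarrow{\,\cdot\varphi_1\,} L_2 \to \mathcal{O}_Z \otimes L_2 \to 0,
$$
obtained by viewing $\varphi_1$ as the injective sheaf map $\mathcal{O} \hookrightarrow L_1$ whose cokernel is the skyscraper $\mathcal{O}_Z$ on the zero divisor $Z$ of $\varphi_1$ (of length $\deg L_1$), and then tensoring with $L_2$. Its long exact sequence reads
$$
0 \to H^0(L_1^* L_2) \to H^0(L_2) \to \mathbb{C}^{\deg L_1} \to H^1(L_1^* L_2) \xrightarrow{\,\cup\varphi_1\,} H^1(L_2) \to 0.
$$
Solvability of $d_{A_2}''\varphi_2 = -a_{21} \varphi_1$ in $\varphi_2$ is precisely the condition $[a_{21}\varphi_1] = 0 \in H^{0,1}(L_2)= H^1(L_2)$, i.e.\ $a_{21} \in \ker(\cup\varphi_1)$; and given such an $a_{21}$, the $\varphi_2$-solutions form a torsor over $\ker d_{A_2}'' = H^0(L_2)$. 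Reading dimensions off the long exact sequence gives $\dim \ker(\cup\varphi_1) = \deg L_1 - h^0(L_2) + h^0(L_1^* L_2)$, so the total parameter count is $\deg L_1 + h^0(L_1^* L_2)$.

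The main obstacle is then the vanishing $h^0(L_1^*L_2) = 0$. In the split type $\Ia$ critical stratum one has $\deg L_1 > d/2$, hence $\deg(L_1^* L_2) = d - 2\deg L_1 < 0$ and the vanishing is automatic, leaving the claimed dimension $\deg L_1$. The degenerate boundary $\deg L_1 = d/2$ with $L_1 \cong L_2$ would yield a jump by $+1$ in this count, but that case does not arise for the present lemma, as the corresponding split semistable critical point lies instead in the minimum stratum labeled $\delta = \tau - d/2$ and is treated separately in the analysis of the Yang-Mills minimum.
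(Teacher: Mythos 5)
Your proof is correct and follows essentially the same route as the paper's: solvability for $\varphi_2$ is the vanishing of the class of $a_{21}\varphi_1$ in $H^{0,1}(L_2)$, the solution space fibers over the kernel of multiplication by $\varphi_1$ with fiber $H^0(L_2)$, and the count reduces to Riemann--Roch together with $h^0(L_1^*L_2)=0$, which holds since $\deg L_1 > d/2$ in every case where the lemma is invoked. The only cosmetic difference is that you extract both the surjectivity of $H^1(L_1^*L_2)\to H^1(L_2)$ and the dimension count from the skyscraper sequence $0\to L_1^*L_2\to L_2\to \mathcal{O}_Z\otimes L_2\to 0$, whereas the paper obtains the surjectivity by Serre duality from the sheaf injection $\mathcal{O}\hookrightarrow L_1$ --- the same fact in dual form.
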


\begin{proof}
The first case (when $\varphi_1 = 0$) is easy, since the equations for $a \in \Omega^{0,1}(L_1^* L_2)$  and $\varphi_2 \in \Omega^0(L_2)$ become 
\begin{equation}
d_A''^* a = 0, \quad d_A'' \varphi_2 = 0 .
\end{equation} 

In the second case (when $\varphi_1 \neq 0$ is fixed), note \eqref{eqn:B2} implies that  ${\mathcal H}(a\varphi_1)=0$, where $\mathcal H$ denotes the harmonic projection $\Omega^{0,1}(L_2)\to H^{0,1}(L_2)$.  Hence, it suffices to show that the map 
\begin{equation} \label{eqn:map}
H^{0,1}(L_1^\ast L_2)\to H^{0,1}(L_2)
\end{equation}
given by multiplication with $\varphi_1$ (followed by harmonic projection) is surjective.  For then, since $\deg L_1^* L_2 < 0$, we have by Riemann-Roch that the dimension of 
\eqref{eqn:B2} is $h^0(L_2)+h^1(L_1^\ast L_2)-h^1(L_2)= \deg L_1$.
By Serre duality, \eqref{eqn:map} is surjective if and only if $H^0(KL_2^\ast)\to H^0(KL_2^\ast L_1)$ is injective.  But since $\varphi_1\neq 0$, multiplication gives an injection of sheaves ${\mathcal O}\hookrightarrow L_1$, and the result follows by tensoring and taking cohomology.
\end{proof}

\begin{lemma}\label{lem:C1-deformation}
The space of solutions to  \eqref{eqn:C1}
has constant dimension $=\deg L_1=j(\delta)$.
\end{lemma}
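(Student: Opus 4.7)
The plan is to identify the solution space of \eqref{eqn:C1} with $\mathcal{H}^1$ of an elliptic sub-complex of $\mathcal{C}_{(A,\Phi)}$ and then to compute its dimension by Riemann--Roch after showing the surrounding cohomology groups vanish. Specifically, restricting $\mathcal{C}_{(A,\Phi)}$ to the components $u_{21}\in\Omega^0(L_1^\ast L_2)$, $(a_{21},\varphi_2)\in\Omega^{0,1}(L_1^\ast L_2)\oplus\Omega^0(L_2)$, and $\beta_2\in\Omega^{0,1}(L_2)$ yields the complex
\begin{equation*}
\Omega^0(L_1^\ast L_2)\ \xrightarrow{\,D_1'\,}\ \Omega^{0,1}(L_1^\ast L_2)\oplus\Omega^0(L_2)\ \xrightarrow{\,D_2'\,}\ \Omega^{0,1}(L_2),
\end{equation*}
with $D_1'(u)=(-d_A''u,\,u\Phi_1)$ and $D_2'(a,\varphi)=d_A''\varphi+a\Phi_1$. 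A direct check using the formulas \eqref{eqn:adjoint} shows that the space of solutions to \eqref{eqn:C1} is exactly $\ker (D_1')^\ast\cap\ker D_2'=\mathcal{H}^1$ of this complex. In Dolbeault terms it is the hypercohomology of the two-term sheaf complex $\mathcal{O}(L_1^\ast L_2)\xrightarrow{\,\cdot\,\Phi_1\,}\mathcal{O}(L_2)$.

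Next I would establish $\mathcal{H}^0=\mathcal{H}^2=0$. The vanishing of $\mathcal{H}^0$ is immediate: $D_1'(u)=0$ forces $u\Phi_1\equiv 0$, and since $\Phi_1\not\equiv 0$ on the Riemann surface this forces $u$ to vanish off the finite zero set of $\Phi_1$, hence everywhere by continuity. The vanishing of $\mathcal{H}^2$ is the one step of real content: since the image of $d_A''\colon\Omega^0(L_2)\to\Omega^{0,1}(L_2)$ is the $L^2$-orthogonal complement of the harmonic representatives of $H^1(L_2)$, surjectivity of $D_2'$ reduces to surjectivity of the multiplication map $H^1(L_1^\ast L_2)\to H^1(L_2)$ induced by $\Phi_1$. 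This is handled exactly as in the last paragraph of the proof of Lemma \ref{lem:B2-deformation}: by Serre duality it is equivalent to injectivity of $H^0(KL_2^\ast)\to H^0(KL_2^\ast L_1)$, which holds because $\Phi_1\not\equiv 0$ induces an injective sheaf map $\mathcal{O}\hookrightarrow L_1$.

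Finally, with $\mathcal{H}^0=\mathcal{H}^2=0$ the index of the elliptic complex computes $\mathcal{H}^1$ directly. By Riemann--Roch the $(1-g)$-terms cancel, giving
\begin{equation*}
-\dim\mathcal{H}^1\ =\ \chi(L_1^\ast L_2)-\chi(L_2)\ =\ \bigl(\deg L_2-\deg L_1\bigr)-\deg L_2\ =\ -\deg L_1,
\end{equation*}
so $\dim\mathcal{H}^1=\deg L_1=j(\delta)$. Because this value is a topological invariant of the complex, together with the already-established vanishings, the dimension is independent of the choice of $(A,\Phi)\in\eta_\delta$, which is the asserted constancy. The only non-routine step is the surjectivity argument for $\mathcal{H}^2=0$, but since that reduces verbatim to the Serre-duality computation of Lemma \ref{lem:B2-deformation}, I anticipate no genuine obstacle.
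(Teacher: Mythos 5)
Your proof is correct and follows essentially the same route as the paper: restrict to the subcomplex $\mathcal{C}^{LT}_{(A,\Phi)}$ on the $L_1^\ast L_2$ and $L_2$ components, kill the cohomology at the two ends of the complex, and compute $\dim\mathcal{H}^1$ by Riemann--Roch. The only cosmetic difference is in the vanishing of $\mathcal{H}^2$: the paper gets it directly from the adjoint formula \eqref{eqn:adjoint} via Lemma \ref{lem:h2} (the equation $(D_2')^\ast\beta=0$ includes $\beta\Phi_1^\ast=0$, which forces $\beta=0$ off the finite zero set of $\Phi_1$ and hence everywhere), whereas you route it through the Serre-duality surjectivity argument of Lemma \ref{lem:B2-deformation}; both are valid.
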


\begin{proof}
Consider the subcomplex $\mathcal{C}^{LT}_{(A, \Phi)}$
\begin{equation}\label{eqn:def-complex-phi-nonzero}
\xymatrix{
\Omega^0(L_1^* L_2 )) \ar[r]^(0.35){D_1} & \Omega^{0,1}(L_1^* L_2) \oplus \Omega^0(L_2) \ar[r]^(0.65){D_2} & \Omega^1(L_2)
} 
\end{equation}
Since $\Phi \neq 0$, by Lemma \ref{lem:h2}
 the cohomology at the ends of the complex \eqref{eqn:def-complex-phi-nonzero} vanishes, and we have (by Riemann-Roch)
\begin{align*}
\dim_\C \mathcal{H}^1(\mathcal{C}^{LT}_{(A, \Phi)}) & = \dim_\C( \ker D_1^* \cap \ker D_2) \\
 & = h^1(L_1^* L_2) + h^0(L_2) - h^1(L_2) - h^0(L_1^* L_2) \\
 & = - \deg L_1^* L_2 + g-1 + \deg L_2 + (1-g) \\
 & = \deg L_1
\end{align*}
\end{proof}

We summarize the the above considerations with
\begin{corollary} \label{cor:smooth}
The fiber of $\nu_\delta$ is linear of constant dimension
for critical sets of type ${\bf II}^\pm$, and for those of type $\Ib$ {\bf provided}
  $\delta\not\in \Delta_{\tau,d}^+\cap [\tau-d/2,\tau]$.  
The complex dimension of the fiber in these cases is $\sigma(\delta)$, where 
 $$ 
\sigma(\delta)=\begin{cases}
2j(\delta)-d+g-1 &\text{ if type $\Ib$ or $\IIminus$} \\
j(\delta) &\text{ if type $\IIplus$}
\end{cases}
$$
\end{corollary}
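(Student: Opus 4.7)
The plan is to observe that Corollary \ref{cor:smooth} is purely a consolidation of the explicit fiber descriptions already obtained in \eqref{eqn:B1}, \eqref{eqn:C1}, \eqref{eqn:C2} together with Lemma \ref{lem:C1-deformation}, combined with an elementary Riemann--Roch calculation to verify constancy of the dimension and to identify the numerical formula $\sigma(\delta)$. The only substantive point to check is that in each of the three listed cases the relevant cohomology group whose dimension could a priori jump actually vanishes, and that the remaining cohomology has the asserted constant dimension depending only on $j(\delta)$, $d$, and $g$.

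For type $\IIplus$, there is nothing to prove beyond invoking Lemma \ref{lem:C1-deformation}: the fiber is already identified there with the kernel of the restricted deformation complex $\mathcal{C}^{LT}_{(A,\Phi)}$, which has constant complex dimension $\deg L_1 = j(\delta)$. For type $\IIminus$, equation \eqref{eqn:C2} identifies the fiber with $H^{0,1}(L_2^*L_1)$. Here $L_2$ is the maximal destabilizing subbundle, so $\deg L_2 = j(\delta) > d/2$, hence $\deg(L_2^*L_1) = d - 2j(\delta) < 0$. Thus $H^0(L_2^*L_1) = 0$, and Riemann--Roch gives $\dim_\C H^{0,1}(L_2^*L_1) = -\deg(L_2^*L_1) + g - 1 = 2j(\delta) - d + g - 1$, which is $\sigma(\delta)$.

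The only case requiring genuine use of the hypothesis is type $\Ib$: by \eqref{eqn:B1} the fiber is $H^{0,1}(L_1^*L_2) \oplus H^0(L_2)$. The second summand can jump when $\deg L_2 \geq 0$. Since type $\Ib$ forces $\delta \in \Delta^+_{\tau,d}$ with $\delta \geq 2\tau - d > \tau - d/2$, the exclusion $\delta \notin [\tau - d/2,\tau]$ simply amounts to $\delta > \tau$. But then $j(\delta) = \delta + d - \tau > d$, so $\deg L_2 = d - j(\delta) < 0$, forcing $H^0(L_2) = 0$. The fiber therefore reduces to $H^{0,1}(L_1^*L_2)$, and since $\deg(L_1^*L_2) = d - 2j(\delta) < 0$ as well, Riemann--Roch yields constant dimension $2j(\delta) - d + g - 1 = \sigma(\delta)$.

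The main obstacle is of a bookkeeping nature rather than analytic: one must convince oneself that the excluded interval $[\tau-d/2,\tau]$ in type $\Ib$ is precisely the range where $\deg L_2 \geq 0$, which is what allows $H^0(L_2)$ to vary discontinuously. Conversely, on type $\Ia$ critical sets (corresponding to $\delta = \tau - d/2$, i.e.\ $\Phi \equiv 0$ with $E$ semistable or split at slope $d/2$) one expects $h^0(E)$ or $h^0(L_1)$ to genuinely jump in general, which is why the corollary deliberately excludes that case.
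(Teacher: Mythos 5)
Your proposal is correct and follows essentially the same route as the paper, which states this corollary simply as a summary of the fiber descriptions \eqref{eqn:B1}, \eqref{eqn:C1}, \eqref{eqn:C2} and Lemma \ref{lem:C1-deformation}, with the same observation that for type $\Ib$ the exclusion of $[\tau-d/2,\tau]$ forces $\deg L_2 = d - j(\delta) < 0$ so that $H^0(L_2)$ vanishes and only the constant-dimensional $H^{0,1}(L_1^*L_2)$ survives. The Riemann--Roch computations and the identification of the excluded range as exactly where $h^0(L_2)$ (or $h^0(E)$) can jump match the paper's reasoning.
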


\begin{remark} \label{rem:ab}
The strata for $\delta\in I_{\tau,d}$ have
 two components corresponding to the strata
 $\A_{j(\delta)}$ and $\Bcal_\delta$.  
When there is a possible ambiguity, we will distinguish these by the notation 
 $\nu_{I,\delta}$ for the negative normal spaces to strata of type $\Ia$ or $\Ib$,
 and $\nu_{II,\delta}$ for the negative normal spaces to strata of type $\IIplus$ or $\IIminus$. 
\end{remark}

\subsection{Cohomology of the negative normal spaces}
As in \cite{DWWW}, at certain critical sets -- namely, those of type $\Ia$, $\Ib$ where $\delta\in \Delta_{\tau,d}^+\cap [\tau-d/2,\tau]$ -- the negative normal directions are not necessarily constant in dimension.  In the present case, they are not even linear.  In order to carry out the computations, we appeal to a relative sequence by considering special subspaces with better behavior.   

\begin{definition}
For $\delta\in \Delta_{\tau,d}^+\cap (\tau-d/2,\tau]$, 
let $\nu_{I,\delta}$ 
be the negative normal space to a critical set with $\Phi\equiv 0$, as in Section \ref{sec:negative}.  Define
\begin{align*}
\nu'_{I,\delta}&=\left\{ (a,\varphi_1,\varphi_2)\in \nu_{I,\delta} : (a,\varphi_1,\varphi_2)\neq 0\right\} \\
\nu''_{I,\delta}&=\left\{ (a,\varphi_1,\varphi_2)\in \nu_{I,\delta} : a\neq 0\right\} 
\end{align*}
\end{definition}
The goal of this section is the proof of the following
\begin{proposition}   \label{prop:cohomology}
\begin{align}
\delta\in \Delta_{\tau,d}^+\cap (\tau-d/2, \tau]\ : &\
H^\ast_\G(\nu_{I,\delta}, \nu''_{I,\delta})\simeq
H^{\ast-2(2j(\delta)-d+g-1)}_{S^1\times S^1}(\eta^\A_{j(\delta)}) 
 \label{eqn:nudoubleprime} \\
\delta\in \Delta_{\tau,d}^+\cap (2\tau-d, \tau]\ : &\
H^\ast_\G(\nu'_{I,\delta}, \nu''_{I,\delta})\simeq 
H^{\ast-2(2j(\delta)-d+g-1)}_{S^1}(S^{d-j(\delta)}M\times \Jac_{j(\delta)}(M)) \label{eqn:nuprime1} \\ 
\delta\in \Delta_{\tau,d}^+\cap (\tau-d/2, 2\tau-d)\ : &\
H^\ast_\G(\nu'_{I,\delta}, \nu''_{I,\delta})\simeq 
 H^{\ast-2j(\delta)}_{S^1}(S^{j(\delta)}M\times\Jac_{d-j(\delta)}(M))\label{eqn:nuprime2} \\
&\qquad \quad \oplus  H^{\ast-2(2j(\delta)-d+g-1)}_{S^1}(S^{d-j(\delta)}M\times \Jac_{j(\delta)}(M)) 
  \notag
\end{align}
\end{proposition}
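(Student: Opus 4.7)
The proof rests on two complementary tools. First, the $a$-component $V := H^{0,1}(L_1^*L_2)$ of $\nu_{I,\delta}$ is a $\G$-equivariant vector bundle of constant complex rank $r := 2j(\delta)-d+g-1$ over the critical set (this uses $\deg L_1^*L_2<0$), so the equivariant Thom isomorphism is available in the $a$-direction. Second, the $(\varphi_1,\varphi_2)$-directions encode holomorphic sections of $L_1,L_2$ whose dimensions jump with Brill-Noether locus; to handle this, I pass to the $\G$-quotient and invoke Abel-Jacobi, which identifies the quotient of $\{(L,\varphi) : \varphi\in H^0(L)\setminus 0\}$ by the scaling $\C^\ast$-action with the symmetric product $S^{\deg L}M$. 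The $S^1\times S^1$ stabilizer of a split critical point acts by scalars on $(\varphi_1,\varphi_2)$ componentwise, and each time a nonzero $\varphi_i$ is projectivized one $S^1$-factor is absorbed, leaving behind $S^1$-equivariant coefficients.

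For \eqref{eqn:nudoubleprime}, the defining constraint $d_{A_2}''\varphi_2 + a\varphi_1 = 0$ is homogeneous linear in $(\varphi_1,\varphi_2)$, so the $\G$-equivariant scaling $r_t(a,\varphi_1,\varphi_2) := (a,t\varphi_1,t\varphi_2)$, $t\in[0,1]$, defines a deformation retraction of $\nu_{I,\delta}$ onto $V$ whose restriction retracts $\nu''_{I,\delta}$ onto $V\setminus 0$. Thus $(\nu_{I,\delta},\nu''_{I,\delta})$ is homotopy equivalent to the Thom pair $(V,V\setminus 0)$, and the equivariant Thom isomorphism, together with the identification $H^*_\G(\eta^\A_{j(\delta)}) \cong H^*_{S^1\times S^1}(\eta^\A_{j(\delta)})$ coming from the $S^1\times S^1$ stabilizer, yields the formula.

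For \eqref{eqn:nuprime1} in the type $\Ib$ range, one has $\varphi_1\equiv 0$ and $\nu_{I,\delta} = V\oplus W$, where $W$ denotes the $H^0(L_2)$-section ``bundle.'' The two open sets $A := \nu''_{I,\delta} = (V\setminus 0)\oplus W$ and $B := V\oplus(W\setminus 0)$ cover $\nu'_{I,\delta}$, so by excision $H^*_\G(\nu'_{I,\delta},\nu''_{I,\delta})\cong H^*_\G(B,A\cap B)$. But $B\to W\setminus 0$ is a rank-$r$ vector bundle (the $V$-bundle restricted to $W\setminus 0$) and $A\cap B = (V\setminus 0)\oplus(W\setminus 0)$ is precisely the complement of its zero section; hence Thom isomorphism gives $H^{*-2r}_\G(W\setminus 0)$. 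The $\G$-quotient of $W\setminus 0$ is then $S^{d-j(\delta)}M \times \Jac_{j(\delta)}(M)$ by Abel-Jacobi (the scaling $S^1$ on $\varphi_2$ is absorbed into the symmetric product, the other $S^1$ remains), yielding \eqref{eqn:nuprime1}.

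For \eqref{eqn:nuprime2} in the type $\Ia$ range, both $\varphi_1\in H^0(L_1)$ and $\varphi_2\in H^0(L_2)$ contribute. I stratify $\nu'_{I,\delta}\setminus\nu''_{I,\delta}=\{a=0,\,(\varphi_1,\varphi_2)\neq 0\}$ into the open $U_1:=\{\varphi_1\neq 0\}$ and closed $U_2:=\{\varphi_1=0,\varphi_2\neq 0\}$. By Lemma \ref{lem:B2-deformation}, on $\{\varphi_1\neq 0\}$ the restricted $\nu_{I,\delta}$ is a rank-$j(\delta)$ complex vector bundle, and an excision/Thom argument parallel to the previous step (now replacing $V$ by this rank-$j(\delta)$ bundle and using Abel-Jacobi for $\varphi_1$) produces the first summand $H^{*-2j(\delta)}_{S^1}(S^{j(\delta)}M\times\Jac_{d-j(\delta)}(M))$. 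The second summand $H^{*-2r}_{S^1}(S^{d-j(\delta)}M\times\Jac_{j(\delta)}(M))$ arises from the $U_2$-stratum, exactly as in the previous step (the local structure at $\varphi_1=0$, $\varphi_2\neq 0$ is of type $\Ib$). The decomposition into a direct sum follows from the long exact sequence of the triple $(\nu'_{I,\delta},\, U_1\cup\nu''_{I,\delta},\, \nu''_{I,\delta})$; the splitting is the main obstacle, which I expect to prove by verifying that the connecting homomorphism vanishes on Thom classes via an explicit Euler-class computation on the relevant symmetric products.
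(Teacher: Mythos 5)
Your arguments for \eqref{eqn:nudoubleprime} and \eqref{eqn:nuprime1} are essentially the paper's: scaling $\Phi$ retracts $(\nu_{I,\delta},\nu''_{I,\delta})$ onto the Thom pair of the constant-rank bundle $V=H^{0,1}(L_1^*L_2)$, and in the type $\Ib$ range the excision down to $(V\oplus(W\setminus 0),\,(V\setminus 0)\oplus(W\setminus 0))$ followed by the Thom isomorphism for $V|_{W\setminus 0}$ is exactly how \eqref{eqn:omegadoubleprime} is obtained. The gap is in \eqref{eqn:nuprime2}, which is the substantive case.

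Your triple $(\nu'_{I,\delta},\,U_1\cup\nu''_{I,\delta},\,\nu''_{I,\delta})$ has middle term $U_1\cup\nu''_{I,\delta}=\{(a,\varphi_1)\neq 0\}$, and neither resulting relative group is a Thom pair. Excising the closed set $\{\varphi_1=0,\,a\neq 0\}$ from $(U_1\cup\nu''_{I,\delta},\nu''_{I,\delta})$ leaves $(Y'_\delta,\,Y'_\delta\cap\{a\neq 0\})$, where $Y'_\delta=\{\varphi_1\neq 0\}$ is the rank-$j(\delta)$ bundle of Lemma \ref{lem:B2-deformation}; but $Y'_\delta\cap\{a=0\}=\{(0,\varphi_2):\varphi_2\in H^0(L_2)\}$ is a subfamily whose rank $h^0(L_2)$ jumps on Brill--Noether loci, so $Y'_\delta\cap\{a\neq 0\}$ is not the complement of the zero section and the Thom isomorphism does not produce the first summand from this term. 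On the other side, $H^\ast_\G(\nu'_{I,\delta},U_1\cup\nu''_{I,\delta})$ localizes near $U_2=\{a=\varphi_1=0,\,\varphi_2\neq 0\}$, along which the transverse structure of $\nu_{I,\delta}$ is the quadric cone $\{(a,\varphi_1):{\mathcal H}(a\varphi_1)=0\}$ coming from the coupling $d_{A_2}''\varphi_2+a\varphi_1=0$; the claim that ``the local structure at $\varphi_1=0$, $\varphi_2\neq 0$ is of type $\Ib$'' is precisely what fails, and $H^\ast(\mathrm{cone},\mathrm{cone}\setminus 0)$ is not a Thom class. The correct intermediate space is $\omega_\delta=\{(a,\varphi_2)\neq 0\}$: then $\nu'_{I,\delta}\setminus\omega_\delta=T_\delta$ is the zero section of $Y'_\delta$, so $(\nu'_{I,\delta},\omega_\delta)$ excises to the genuine Thom pair $(Y'_\delta,Y''_\delta)$ and yields the first summand, while $(\omega_\delta,\nu''_{I,\delta})$ retracts onto its intersection with $\{\varphi_1=0\}$, where the equations decouple and the $\Ib$ computation applies, yielding the second. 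Finally, the splitting cannot be dispatched by ``checking that the connecting map kills Thom classes'': it is established by showing that the composite $H^\ast_\G(\nu_{I,\delta},\nu''_{I,\delta})\to H^\ast_\G(\omega_\delta,\nu''_{I,\delta})$ is surjective, which after the two Thom isomorphisms reduces to surjectivity of the Abel--Jacobi restriction $H^\ast(\Jac_{d-j(\delta)}(M))\to H^\ast(S^{d-j(\delta)}M)$ (MacDonald's theorem); this forces $\beta$ in \eqref{eqn:type-Ia-diagram} to be surjective and the long exact sequence of the triple to split. That input is essential and is absent from your argument.
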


\begin{proof}
Fix  $E = L_1 \oplus L_2$. Consider first the case $\tau>\deg L_1 = j(\delta) > d/2$, and $\deg L_2 = d-j(\delta) < d/2$.
Define the following spaces
\begin{align*}
\omega_\delta & = \left\{ (A_1, A_2, a, \varphi_1, \varphi_2) \in \nu_{I,\delta} \, : \, (a, \varphi_2)\neq 0 \right\}  \\
Z_\delta^- & = \left\{ (A_1, A_2, a, \varphi_1, \varphi_2) \in \nu_{I,\delta} \, : \, \varphi_1 = 0 \right\} \\
Z_\delta' & = \left\{ (A_1, A_2, a, \varphi_1, \varphi_2) \in \nu_{I,\delta} \, : \, \varphi_1 = 0, (a, \varphi_2) \neq 0 \right\} \\
Y_\delta' & = \left\{ (A_1, A_2, a, \varphi_1, \varphi_2) \in \nu_{I,\delta} \, : \, \varphi_1 \neq 0 \right\} \\
Y_\delta'' & = \left\{ (A_1, A_2, a, \varphi_1, \varphi_2) \in \nu_{I,\delta} \, : \, \varphi_1 \neq 0, (a, \varphi_2) \neq 0 \right\} \\
T_\delta &= \left\{ (A_1, A_2, a, \varphi_1, \varphi_2) \in \nu_{I,\delta}\, : \, \varphi_1 \neq 0, (a, \varphi_2) = 0 \right\} 
\end{align*}
\noindent 
Note that $Y_\delta' = \nu_{I,\delta} \setminus Z_\delta^- = \nu'_{I,\delta} \setminus 
Z_\delta'$ and $Y_\delta'' = \omega_{\delta} \setminus Z_\delta'$. Consider the following commutative diagram.

\begin{equation}\label{eqn:type-Ia-diagram}
\xymatrix{
&& \vdots \ar[d] &&&\\
&\cdots \ar[r] & H_\mathcal{G}^p(\nu_{I,\delta}, \nu'_{I,\delta}) \ar[r] \ar[d] & H_\mathcal{G}^p(\nu_{I,\delta}) \ar[r] & H_\mathcal{G}^p(\nu'_{I,\delta}) \ar[r] & \cdots \\
 && H_\mathcal{G}^p(\nu_{I,\delta}, \nu''_{I,\delta}) \ar[d] \ar[ur]^\xi \ar[dr]^{\xi''} &&&\\
\dots\ar[r] & H^p_\G(\nu'_{I,\delta},\omega_\delta) \ar[r] & H_\mathcal{G}^p(\nu'_{I,\delta}, \nu''_{I,\delta}) \ar[d] \ar[r]^\beta & H^p_\G(\omega_\delta, 
\nu''_{I,\delta}) \ar[r] & \cdots &
\\
 && \vdots &&&
}
\end{equation}

\begin{itemize}

\item   First, it follows as in the proof of \cite[Thm.\ 2.3]{DWWW} that 
the pair $(\nu_{I,\delta}, \nu''_{I,\delta})$ is homotopic to the Atiyah-Bott pair $(X^\A_{j(\delta)}, X^\A_{j(\delta)-1})$. Hence, 
\eqref{eqn:nudoubleprime} follows
from \cite{AtiyahBott83}. 

\item  Consider the pair 
$(\nu'_{I,\delta},\omega_\delta)$.  Excision of $Z_\delta'$ gives the isomorphism
\begin{equation}
H_\mathcal{G}^* (\nu'_{I,\delta}, 
\omega_\delta) \cong H_\mathcal{G}^*(\nu'_{I,\delta} \setminus Z_\delta', \omega_\delta \setminus Z_\delta') \cong H_\mathcal{G}^*(Y_\delta', Y_\delta'') 
\end{equation}
The space $Y_\delta'' = Y_\delta'
 \setminus T_\delta$, and Lemma \ref{lem:B2-deformation} shows that 
$Y_\delta'$ is a vector bundle over $T_\delta$ with fibre dimension $=\deg L_1$. Therefore the Thom isomorphism implies
$$
H_\mathcal{G}^*(Y_\delta', Y_\delta'') = H_\mathcal{G}^*(Y_\delta', Y_\delta' \setminus X_\delta') \cong H_\mathcal{G}^{*-2j(\delta)}(T_\delta)
$$
and therefore 
\begin{equation}
H_\G^\ast(\nu'_{I,\delta}, \omega_\delta) = H_\G^\ast(Y_\delta', Y_\delta'') = H^{\ast-2j(\delta)}_{S^1}(S^{j(\delta)} M \times \Jac_{d-j(\delta)}(M)) \label{eqn:omegaprime}
\end{equation}

\item  Consider $(\omega_\delta, \nu''_{I,\delta})$.
  By retraction, the pair  is homotopic to the intersection with $\varphi_1=0$.  It then follows exactly as in \cite{DWWW} (or the argument above) that
\begin{equation}
H_\mathcal{G}^*(\omega_\delta, \nu''_{I,\delta})\cong H_{S^1}^{*-2(2j(\delta)-d+g-1)}(S^{d-j(\delta)} M\times\Jac_{j(\delta)}(M))
\label{eqn:omegadoubleprime}
\end{equation}
(Recall that $\dim H^{0,1}(L_1^* L_2) = 2j(\delta)-d+g-1$ by Riemann-Roch, and that $\deg L_2 = d-j(\delta)$).
\end{itemize}
It then follows as in \cite{DWWW} that $\xi''$, and hence also $\beta$,  is surjective.  This implies that  the lower horizontal exact sequence splits, and
\eqref{eqn:nuprime2} follows from \eqref{eqn:omegaprime} and \eqref{eqn:omegadoubleprime}. 
This completes the proof in this case.
The case where $\deg L_1>\tau$ is simpler, since $\varphi_1\equiv 0$ from \eqref{eqn:phieigenvalue}.  Hence, $\omega_\delta=\nu_{I,\delta}'$, and the proof proceeds as above.
\end{proof}

\subsection{The Morse-Bott lemma} \label{sec:bott}
In this section we prove the fundamental relationship between the relative cohomology of successive strata and  the relative cohomology of the negative normal spaces.  From this we derive the proof of the main result. In the following we use $\delta_1$ to denote the predecessor of $\delta$ in $\Delta_{\tau, d}$.

\begin{theorem}  \label{thm:bott}
For all $\delta\in \Delta_{\tau,d}\setminus \left( \Delta_{\tau,d}^+\cap I_{\tau,d} \right)$,
\begin{equation} \label{eqn:bott1}
H^\ast_\G(X_{\delta}, X_{\delta_1})\simeq H^\ast_\G(\nu_\delta, \nu'_\delta)
\end{equation}
For all $\delta\in \Delta_{\tau,d}^+\cap I_{\tau,d}$,
\begin{equation} \label{eqn:bottprime}
H^\ast_\G(X_{\delta}', X_{\delta_1})\simeq H^\ast_\G(\nu_{II,\delta}, \nu'_{II,\delta})
\end{equation}
For all $\delta\in \Delta_{\tau,d}^+\cap I_{\tau,d}$, $\delta\neq \tau-d/2$,
\begin{equation} \label{eqn:bott}
H^\ast_\G(X_{\delta}, X_{\delta}')\simeq H^\ast_\G(\nu_{I,\delta}, \nu'_{I,\delta})
\end{equation}
 Eq.\ \eqref{eqn:bott}  also holds for  $\delta=\tau-d/2$, provided $d>4g-4$.
In the statements above, $\delta_1$ denotes the predecessor of $\delta$ in  $\Delta_{\tau,d}$.
\end{theorem}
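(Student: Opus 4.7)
The strategy is to establish each isomorphism by combining three ingredients: the Yang-Mills-Higgs flow, which produces an equivariant deformation of $X_\delta$ near $\eta_\delta$; the slice theorem (Proposition \ref{prop:slice}), which gives local models for $\B$ transverse to the $\G^\C$-orbits; and the explicit equivariant retractions of Lemmas \ref{lem:retraction1} and \ref{lem:retraction2}, which collapse the slice onto its negative normal directions.

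First, I would fix $\delta$ and choose a $\G$-invariant open neighborhood $U_\delta$ of $\eta_\delta$ in $\B$ small enough to be disjoint from all other critical sets, which is possible by Lemma \ref{lem:background-results}(3). Since the gradient flow is generated by the action of $\G^\C$, the strata $\B_\delta$ are $\G^\C$-invariant, and on each stratum $f_\tau$ is minimized by $\eta_\delta$, any point of $X_\delta \setminus \eta_\delta$ whose flow limit escapes $U_\delta$ must converge to a critical set in $X_{\delta_1}$. Combined with Proposition \ref{prop:flow-below}, this produces a $\G$-equivariant deformation of $X_\delta$ onto $(X_\delta \cap U_\delta)\cup X_{\delta_1}$, yielding by excision
$$
H^*_\G(X_\delta, X_{\delta_1}) \simeq H^*_\G(X_\delta \cap U_\delta,\, X_\delta \cap U_\delta \setminus \eta_\delta).
$$
Analogous reductions apply for the pairs $(X_\delta', X_{\delta_1})$ and $(X_\delta, X_\delta')$ in \eqref{eqn:bottprime} and \eqref{eqn:bott}, using the flow restricted to $\B_\delta$ of the appropriate type.

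Next, I would apply the slice theorem to cover $U_\delta$ by slices $\Scal_{(A,\Phi)}$ centered at points of $\eta_\delta$. For critical sets of type $\IIplus$, $\IIminus$, or $\Ib$ with $\delta \notin \Delta^+_{\tau,d} \cap [\tau-d/2, \tau]$, Lemma \ref{lem:h2} yields $\mathcal{H}^2(\mathcal{C}_{(A,\Phi)}) = 0$, so by Proposition \ref{prop:kuranishi} the Kuranishi map identifies $\Scal_{(A,\Phi)}$ with the linear space $\mathcal{H}^1(\mathcal{C}_{(A,\Phi)})$. This linear space splits into positive and negative eigenspaces of the Hessian, and scaling down the positive directions gives an equivariant deformation retraction of the slice onto $\nu_\delta \cap \Scal_{(A,\Phi)}$. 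Patching these local retractions along the critical set (using that $\eta_\delta$ modulo $\G$ is compact, so a finite cover suffices) yields \eqref{eqn:bott1} and, restricting the same argument to the $\IIplus$ component, \eqref{eqn:bottprime}.

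The remaining case \eqref{eqn:bott} is the principal obstacle, because the critical points of type $\Ia$/$\Ib$ have $\Phi \equiv 0$ and $\mathcal{H}^2(\mathcal{C}_{(A,0)})$ generally does not vanish; hence the Kuranishi map no longer linearizes the slice, and the negative normal directions form a \emph{cusp} rather than a vector bundle. For $\delta \neq \tau - d/2$, the critical set consists of strictly unstable split bundles $E = L_1 \oplus L_2$ with $\deg L_1 > d/2$, and Lemma \ref{lem:retraction1} directly produces the equivariant retraction of the slice onto $\nu_{I,\delta}$ and of the slice minus the critical set onto $\nu'_{I,\delta}$, giving \eqref{eqn:bott}. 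For the borderline case $\delta = \tau - d/2$, where $\eta_\delta = \A_{ss}$, we invoke Lemma \ref{lem:retraction2} instead; its hypothesis $H^1(E) = 0$ for semistable $E$ is exactly Serre-dual to the vanishing of $\mathcal{H}^2$, and holds precisely when $d > 4g-4$. This degree restriction is the essential obstruction, reflecting the genuine Brill-Noether jumping phenomenon on $\A_{ss}$ in low degree, and motivates the indirect twist-embedding argument of Section \ref{sec:low_degree}.
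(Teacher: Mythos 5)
Your reduction of \eqref{eqn:bott1} and \eqref{eqn:bottprime} to the case of constant-rank negative normal bundles (flow plus excision to localize near $\eta_\delta$, then the slice/Kuranishi linearization and a fiberwise retraction globalized by Bott continuity) matches the paper's argument, as does the use of Lemma \ref{lem:retraction2} for $\delta=\tau-d/2$ when $d>4g-4$. But the step you offer for \eqref{eqn:bott} when $\delta\in\Delta^+_{\tau,d}\cap I_{\tau,d}$, $\delta\neq\tau-d/2$, does not work. These critical points are of type $\Ia$ with $E=L_1\oplus L_2$ unstable and $\Phi\equiv 0$, so Lemma \ref{lem:retraction1} simply does not apply: its hypothesis is $\Phi_1\neq 0$, and its retraction $r_t$ scales $\varphi_1$ to zero while fixing $(a_{21},\varphi_2)$, which is adapted to the type ${\bf II}$ negative directions, not to the solution space of \eqref{eqn:B2}. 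Worse, even if you had a local retraction onto $\nu_{I,\delta}$, you could not globalize it as in the constant-rank case: by Lemma \ref{lem:B2-deformation} the fiber of $\nu_{I,\delta}$ jumps from $H^{0,1}(L_1^*L_2)\oplus H^0(L_2)$ over $\varphi_1=0$ to a $\deg L_1$-dimensional space over $\varphi_1\neq 0$, so $\nu_{I,\delta}$ is not a bundle over $\eta_\delta$ and the Bott continuity/Thom-type argument is unavailable. You acknowledge the cusp phenomenon in the sentence before, and then assert the retraction anyway; that assertion is the entire difficulty.

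The paper's actual route for these strata (and for the remaining cases of \eqref{eqn:bott1} with $\delta\in\Delta^+_{\tau,d}\cap[2\tau-d,\tau]$, which your proposal also leaves uncovered, since there $\deg L_2=d-j(\delta)\geq 0$ and $h^0(L_2)$ jumps) is Proposition \ref{prop:key}: one introduces $X_\delta''=X_\delta\setminus\pr^{-1}(\A_{j(\delta)})$ and the subspace $\nu''_{I,\delta}$, proves $H^\ast_\G(X_\delta,X_\delta'')\cong H^\ast_\G(\nu_{I,\delta},\nu''_{I,\delta})$ by scaling $\Phi$ and excising down to the Atiyah--Bott pair $(X^\A_{j(\delta)},X^\A_{j(\delta)-1})$, proves $H^\ast_\G(X_\delta',X_\delta'')\cong H^\ast_\G(\nu'_{I,\delta},\nu''_{I,\delta})$ by a chain of excisions culminating in a change of stability parameter to $\tau'=j(\delta)-\varepsilon$, and only then recovers \eqref{eqn:bott} and the missing cases of \eqref{eqn:bott1} by the five lemma applied to the triples $(X_\delta,X_\delta',X_\delta'')$ and $(\nu_{I,\delta},\nu'_{I,\delta},\nu''_{I,\delta})$. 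Some substitute for this relative comparison is needed; a direct retraction argument cannot succeed here.
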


First, we give a proof of \eqref{eqn:bott1}
in the case $\delta\not\in\Delta_{\tau,d}^+\cap[\tau-d/2,\tau]$. 
  By excision and  convergence of the gradient flow, there is a neighborhood $U$ of $\eta_\delta$ such that
\begin{itemize}
\item $U$ is $\G$-invariant;
\item $U$ is the union of images of slices $\Scal_{(A,\Phi)}$, where $(A,\Phi)\in \eta_\delta$;
\item  $H^\ast_\G(X_{\delta}, X_{\delta_1})\simeq H^\ast_\G(U,U\setminus (U\cap \B_\delta))$
\end{itemize}
Notice that for each slice 
$\Scal_{(A,\Phi)}\cap U\setminus (U\cap \B_\delta)=\Scal_{(A,\Phi)}'\cap U$,
where the latter is defined as in Lemma \ref{lem:retraction1}.  By the lemma, it follows that the pair
$(U,U\setminus (U\cap \B_\delta))$ locally retracts to 
$(\nu_\delta, \nu_\delta')$.  On the other hand, by Corollary \ref{cor:smooth}, $\nu_\delta$ 
is a bundle over $\eta_\delta$.  It follows by continuity as in \cite{Bott54}, that there is 
a $\G$-equivariant retraction of the pair $(\nu_\delta, \nu_\delta')
\hookrightarrow(U,U\setminus (U\cap \B_\delta))$.  The result therefore follows in this case. 
We also note that by Corollary \ref{cor:smooth} and the Thom isomorphism,
\begin{equation} \label{eqn:thom_eta}
H^\ast_\G(\nu_\delta, \nu_\delta')\simeq H^{\ast-2\sigma(\delta)}_\G(\eta_\delta)
\end{equation}

\begin{remark} \label{rem:bott}
 Notice that by Corollary \ref{cor:smooth} the same argument also proves \eqref{eqn:bottprime}.  For $d>4g-4$, we can use Lemma \ref{lem:retraction2} in the same way to derive
 \eqref{eqn:bott} for $\delta=\tau-d/2$.  In this case, by the Thom isomorphism, we have
\begin{equation} \label{eqn:minimal_ym}
H^\ast_\G(X_{\tau-d/2}, X_{\tau-d/2}')\simeq H^{\ast-2(d+2-2g)}_\G(\A_{ss})
\end{equation}
 \end{remark}

\begin{lemma} \label{lem:claim}
For $\delta\not\in\Delta_{\tau,d}^+\cap[\tau-d/2,\tau]$, or if $\delta=\tau-d/2$ and $d>4g-4$, then
the long exact sequence \eqref{eqn:prime}
splits.  Similarly, the long exact sequence 
\begin{equation}
\cdots\lra H^p_\G(X_\delta', X_{\delta_1})\lra H^p_\G(X_\delta')\lra H^p_\G(X_{\delta_1})\lra\cdots \label{eqn:1prime}
\end{equation}
splits for all $\delta\in I_{\tau,d}$.
\end{lemma}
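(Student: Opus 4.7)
The plan is to reduce splitting to an Atiyah--Bott-type injectivity statement for multiplication by an equivariant Euler class, by first identifying each relative cohomology group with a Thom space via Theorem \ref{thm:bott} and then exhibiting a distinguished torus in the stabilizer of critical points.

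I would first apply Theorem \ref{thm:bott} in each case. For $\delta \notin \Delta^+_{\tau,d}\cap I_{\tau,d}$, the isomorphism \eqref{eqn:bott1} applies directly, and Corollary \ref{cor:smooth} makes $\nu_\delta$ a complex vector bundle of constant rank $\sigma(\delta)$ over $\eta_\delta$. For the minimal-stratum case $\delta = \tau-d/2$ with $d > 4g-4$, the Brill--Noether vanishing $H^1(E) = 0$ used in Lemma \ref{lem:retraction2} makes $\nu_{I,\tau-d/2}$ the honest vector bundle $H^0(E) \to \A_{ss}$, and one decomposes $(X_\delta, X_{\delta_1})$ via the triple with $X'_\delta$, applying \eqref{eqn:bott} and \eqref{eqn:bottprime} separately to the two steps. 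The Thom isomorphism then yields
\[
H^\ast_\G(X_\delta, X_{\delta_1})\ \simeq\ H^{\ast-2\sigma(\delta)}_\G(\eta_\delta),
\]
and factors the natural map into $H^\ast_\G(X_\delta)$ through cup product with the equivariant Euler class $e(\nu_\delta)$ followed by restriction. The same reduction handles \eqref{eqn:1prime} for $\delta \in I_{\tau,d}$ via \eqref{eqn:bottprime}, with $\nu_{II,\delta}$ of constant rank by Corollary \ref{cor:smooth}, so that no degree hypothesis is required.

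Next I would show that cup product with $e(\nu_\delta)$ is injective on $H^\ast_\G(\eta_\delta)$. For each split $E = L_1 \oplus L_2$ underlying a point of $\eta_\delta$, the stabilizer contains a torus $T \subset \G$ acting trivially on $\eta_\delta$ and by nontrivial characters on every direct summand of $\nu_\delta$ identified in Section \ref{sec:negative}: for type $\Ib$ the antidiagonal $S^1 \subset U(1)\times U(1)$ weighs $L_1^\ast L_2$ and $L_2$ by distinct nonzero integers; for types $\IIplus$ and $\IIminus$ the residual $\{1\}\times U(1)$ stabilizing $\Phi \in H^0(L_1)\setminus\{0\}$ acts with uniform nonzero weight on $L_1^\ast L_2$, $L_2^\ast L_1$, and $H^0(L_2)$; for the minimal Yang--Mills stratum the central circle scaling $\Phi$ acts with weight one on $H^0(E)$. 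By Proposition \ref{prop:critical_set_cohomology}, $H^\ast_\G(\eta_\delta)$ is free as a module over the polynomial ring $H^\ast(BT)$, and $e(\nu_\delta)$ is a product of nonzero characters, hence a nonzero element of this polynomial ring; multiplication by a nonzero polynomial in a free polynomial module is injective, so the connecting map in \eqref{eqn:prime} (respectively \eqref{eqn:1prime}) vanishes and the sequences split.

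The main obstacle is the uniform weight verification across the heterogeneous critical types and the corresponding choice of torus in each. The exclusion of $\delta \in \Delta^+_{\tau,d}\cap[\tau-d/2,\tau]$ from the first assertion is precisely the range where $\nu_{I,\delta}$ fails to be a vector bundle, so the Thom isomorphism is unavailable there and the more delicate relative-cohomology calculation of Proposition \ref{prop:cohomology} must be invoked instead; the degree condition $d > 4g-4$ for the remaining minimal case is what avoids Brill--Noether pathologies and restores the Thom picture for $\nu_{I,\tau-d/2}$. Once past this subtlety, the proof reduces to the classical Atiyah--Bott injectivity criterion applied separately at each critical level.
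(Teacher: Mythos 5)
Your proposal is correct and follows essentially the same route as the paper: identify $H^\ast_\G(X_\delta,X_{\delta_1})$ with $H^\ast_\G(\nu_\delta,\nu'_\delta)$ via the Morse--Bott lemma, use that $\nu_\delta\to\eta_\delta$ is a complex vector bundle of constant rank on which a circle subgroup of $\G$ fixes the zero section and acts freely off it, and conclude injectivity of the map to $H^\ast_\G(\eta_\delta)$ by the Atiyah--Bott criterion (\cite[Prop.\ 13.4]{AtiyahBott83}), which is exactly the Euler-class/free-module argument you spell out. The only difference is presentational: you unwind the weight computation at each critical type, where the paper simply cites Atiyah--Bott.
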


\begin{proof}
Indeed, since \eqref{eqn:bott1} holds in this case, we have 
\begin{equation}\label{eqn:diagram}
\xymatrix{
\cdots \ar[r] & H_{\mathcal{G}}^p(X_\delta, X_{\delta_1}) \ar[d]^{\cong} \ar[r]^{\alpha} & H_{\mathcal{G}}^p(X_\delta) \ar[r] \ar[d] & H_{\mathcal{G}}^p(X_{\delta_1})  \ar[r] & \cdots \\
& H_\mathcal{G}^p(\nu_\delta, \nu'_\delta) \ar[r]^{\beta} & H_\mathcal{G}^p(\eta_\delta)   &  & 
}
\end{equation}
Now $\nu_\delta\to \eta_\delta$ is a complex vector bundle with a $\G$-action and a circle subgroup that fixes $\eta_\delta$ and acts freely on $\nu_\delta \setminus \eta_\delta$, so by \cite[Prop.\ 13.4]{AtiyahBott83}, $\beta$ is injective.  It follows that
is $\alpha$ is injective as well, and hence the sequence splits.  The second statement follows by Remark \ref{rem:bott} and the same argument as above.
\end{proof}

It remains to prove \eqref{eqn:bott} and the remaining cases of \eqref{eqn:bott1}.  As noted above, 
in these cases the negative normal spaces are no longer constant in dimension, 
and indeed they are not even linear in the fibers.  From the point of 
view of deformation theory, the Kuranishi map near these critical sets is not surjective, and 
defining an appropriate retraction is more difficult than in the situation just 
considered.  Instead, we resort to the analog of the decomposition used in Section 
\ref{sec:negative}.
Let
$
X_{\delta}^{\prime\prime}=X_{\delta}\setminus \pr^{-1}(\A_{j(\delta)})
$.
Note that by Lemma \ref{lem:projection},
$X_\delta^{\prime\prime}\subset X_{\delta}'$.
We will prove the following

\begin{proposition}  \label{prop:key}
Suppose $\delta\in \Delta^+_{\tau,d}\cap (\tau-d/2,\tau]$.  Then
\begin{align}
H^\ast_\G(X_{\delta}, X_\delta^{\prime\prime})&\cong H^\ast_\G(\nu_{I,\delta}, \nu''_{I,\delta})) \label{eqn:a_prime} \\
H^\ast_\G(X_{\delta}', X_\delta^{\prime\prime})&\cong H^\ast_\G(\nu'_{I,\delta}, \nu''_{I,\delta}) \label{eqn:b_prime}
\end{align}
\end{proposition}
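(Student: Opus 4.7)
The strategy parallels the proof of \eqref{eqn:bott1} given earlier but must overcome two obstructions specific to critical points of $\eta_\delta$, where $\Phi \equiv 0$ on a properly split bundle $E = L_1 \oplus L_2$: Lemma \ref{lem:retraction1} does not apply (it requires $\Phi_1 \neq 0$), and the negative normal space $\nu_{I,\delta}$ is a cusp rather than a vector bundle, so Corollary \ref{cor:smooth} fails. As a first step I would use Proposition \ref{prop:flow-below} together with Lemma \ref{lem:background-results}(3) to choose a $\G$-invariant open neighborhood $U$ of $\eta_\delta$ that is saturated under the downward gradient flow of $f_\tau$ and disjoint from every other critical set. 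Morse-theoretic excision then reduces both claims to the local isomorphisms
$H^\ast_\G(U \cap X_\delta,\, U \cap X_\delta^{\prime\prime}) \cong H^\ast_\G(\nu_{I,\delta},\, \nu''_{I,\delta})$
and analogously for $(X_\delta',\, X_\delta^{\prime\prime})$.

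Next, using the slice map of Proposition \ref{prop:slice}, I would cover $U$ by images of slices $\Scal_{(A,0)}$ at critical points $(A,0) \in \eta_\delta$. Within each slice the Hessian of $f_\tau$ splits the tangent space into zero eigendirections (along $\eta_\delta$), positive eigendirections (tangent to the Harder-Narasimhan stratum $\A_{j(\delta)}$ and otherwise flowing out of $X_\delta$), and negative eigendirections which, together with the slice constraint, yield the description of $\nu_{I,\delta}$ from Section \ref{sec:negative}. A nonzero off-diagonal component $a_{21}$ parametrizes a nontrivial extension class deforming $L_1 \oplus L_2$ into a non-split bundle whose maximal subbundle has strictly smaller degree; hence $(A+a,\Phi+\varphi) \in \pr^{-1}(\A_{j(\delta)})$ if and only if $a_{21} = 0$. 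This matches $U \cap X_\delta^{\prime\prime}$ slice-wise with $\{a_{21} \neq 0\}$ and $U \cap X_\delta'$ with $\{(a_{21},\varphi_1,\varphi_2) \neq 0\}$, i.e.\ with the defining subsets of $\nu''_{I,\delta}$ and $\nu'_{I,\delta}$ respectively.

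The local retraction onto the negative normal space is then furnished by the downward gradient flow of $f_\tau$, suitably reparameterized: positive eigendirections contract to zero while negative eigendirections are preserved, giving a $\G$-equivariant deformation retraction of $U \cap X_\delta$ onto $\nu_{I,\delta}$ that restricts to retractions $U \cap X_\delta^{\prime\prime} \to \nu''_{I,\delta}$ and $U \cap X_\delta' \to \nu'_{I,\delta}$, because the open conditions $\{a_{21}\neq 0\}$ and $\{(a_{21},\varphi_1,\varphi_2)\neq 0\}$ are preserved along the flow. To globalize across $\eta_\delta$, I would assemble the slice-wise retractions by a $\G$-invariant partition of unity and invoke the weighted homogeneity of the cusp constraint \eqref{eqn:B2} under $(a_{21},\varphi_1,\varphi_2) \mapsto (t a_{21},\, t\varphi_1,\, t^2 \varphi_2)$, which allows consistent scaling inside each fiber of $\nu_{I,\delta}$.

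The main obstacle is precisely this globalization step. Unlike in the proof of \eqref{eqn:bott1}, where Corollary \ref{cor:smooth} provides a vector bundle and Bott's normal-bundle continuity \cite{Bott54} immediately yields the global retraction, here the fiber dimension of $\nu_{I,\delta}$ may jump where $h^0(L_1)$ jumps (cf.\ Lemma \ref{lem:B2-deformation}), so one cannot produce a bundle-theoretic gluing. Instead, continuity of the family of cusp subvarieties as closed subsets of the ambient configuration space, combined with the weighted homogeneity above, must be used directly to build the equivariant retraction; this is where the core technical work of the argument lies.
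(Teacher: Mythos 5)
There is a genuine gap, and it sits exactly where you place it yourself: the ``globalization step'' that you defer as ``the core technical work'' is not a technical detail but the entire difficulty, and it is precisely the obstruction that the paper identifies as the reason a direct Morse--Bott retraction is unavailable here. For $\delta\in\Delta^+_{\tau,d}\cap(\tau-d/2,\tau]$ the fibers of $\nu_{I,\delta}$ are cut out by the quadratic condition \eqref{eqn:B2}, are not linear, and jump in dimension with $h^0(L_1)$ (Lemma \ref{lem:B2-deformation}); consequently the Kuranishi map is not surjective near $\eta_\delta$, Bott's continuity argument \cite{Bott54} does not apply, and there is no known $\G$-equivariant deformation retraction of a flow-saturated neighborhood pair onto $(\nu_{I,\delta},\nu''_{I,\delta})$. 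Gluing slice-wise retractions by a partition of unity does not produce a retraction (a convex combination of retractions need not be one), and the weighted homogeneity of \eqref{eqn:B2} only rescales within a single fiber; it does nothing to reconcile fibers of different dimensions over nearby points of $\eta_\delta$. As written, your argument assumes the conclusion of the hard step rather than proving it.

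The paper's actual proof avoids constructing any retraction onto the cusp. For \eqref{eqn:a_prime} it augments $X_\delta$ to $\widehat X_\delta$ by adjoining the strata $X^\A_{\delta'+\tau}$ for $\delta'\in\Delta^-_{\tau,d}$ (an excision-neutral change), so that the pair becomes invariant under scaling $\Phi\mapsto t\Phi$; the projection $\pr$ to $\A$ is then a deformation retraction, and a further excision of the closed set $\K_\delta$ identifies the pair with the Atiyah--Bott pair $(X^\A_{j(\delta)},X^\A_{j(\delta)-1})$, which matches $H^\ast_\G(\nu_{I,\delta},\nu''_{I,\delta})$ by \eqref{eqn:nudoubleprime}. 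For \eqref{eqn:b_prime} it runs the diagram \eqref{eqn:trick} to split off the type-$\IIplus$ contribution via Lemma \ref{lem:claim} and \eqref{eqn:bottprime}, and then, after a chain of excisions, identifies the remaining term $H^\ast_\G(X_{\delta_1},X_\delta'')$ with $H^\ast_\G(\B^{\tau'}_{ss}\cup\B^{\tau'}_\varepsilon,\B^{\tau'}_{ss})$ for the shifted parameter $\tau'=j(\delta)-\varepsilon$, where the relevant critical stratum is of type $\IIminus$ and hence has a genuine vector-bundle negative normal space, so the regular Morse--Bott lemma and Thom isomorphism apply. If you want to salvage your approach, you would need to either prove the equivariant retraction onto the singular model directly (which the authors evidently could not do) or adopt some version of this ``augment, rescale $\Phi$, excise, and change the stability parameter'' strategy.
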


\begin{proof}[Proof of \eqref{eqn:a_prime}]
By \cite{AtiyahBott83} and \eqref{eqn:nudoubleprime}, it suffices to prove
\begin{equation} \label{eqn:ab}
H^\ast_\G(X_{\delta}, X_\delta^{\prime\prime})\cong H^\ast_\G(X^\A_{j(\delta)}, X^\A_{j(\delta)-1})
\end{equation}
We first note that the pair $(X_{\delta}, X_\delta^{\prime\prime})$ is not necessarily invariant under scaling $t\Phi$, $t\to 0$, in particular because of the strata in $\Delta_{\tau,d}^-$ (cf.\ Lemma \ref{lem:projection}).   However, if we set
$$
\widehat X_{\delta}= X_{\delta}\cup\bigcup_{\delta'\leq \delta\, ,\, \delta'\in \Delta_{\tau,d}^-} X^\A_{\delta'+\tau}
 \ ,\
\widehat X_{\delta}^{\prime\prime}=\widehat X_{\delta}\setminus \pr^{-1}(\A_{j(\delta)})$$
then by excision on the closed  subset 
$$
\bigcup_{j(\delta)-\tau<\delta'\leq \delta\atop \delta'\in \Delta_{\tau,d}^-} \A_{\delta'+\tau} 
$$
it follows that 
$
H^\ast_\G(X_{\delta}, X_\delta^{\prime\prime})=H^\ast_\G(\widehat X_{\delta}, \widehat X_\delta^{\prime\prime})
$.
Then for the pair $(\widehat X_{\delta}, \widehat X_\delta^{\prime\prime})$, projection to $\A$ is a deformation retraction (by scaling the section $\Phi$), and we have
\begin{equation} \label{eqn:xhat}
H^\ast_\G(X_{\delta}, X_\delta^{\prime\prime})= H^\ast_\G(\pr(\widehat X_{\delta}), \pr(\widehat X_\delta^{\prime\prime}))
\end{equation}
Next, let
$$
\K_\delta=\pr\bigl(\widehat X_{(\tau-d/2)}\cup \bigcup _{\delta'\leq \delta\, ,\, \delta'\in \Delta_{\tau,d}^-} \Bcal_{\delta'}  
\cup \bigcup _{\delta'< \tau-d/2\, ,\, \delta'\in \Delta_{\tau,d}^+} \Bcal_{\delta'}   \bigr) \cap \bigcup_{k>j(\delta)} \A_{k}
$$
Note that $\K_\delta\subset \pr(\widehat X_\delta^{\prime\prime})$.  We claim that it is actually a closed subset of $\pr(\widehat X_{\delta})$.  Indeed, suppose $(A_i, \Phi_i)\in X_{(\tau-d/2)}$, $(A_i, \Phi_i)\to (A, \Phi)\in \widehat X_{\delta}$, and suppose that $\mu_+(A_i)>j(\delta)$ for each $i$.  By semicontinuity, it follows that $\mu_+(A)>j(\delta)$.  On the other had, either $A\in\K_\delta$ or 
$(A,\Phi)\in \Bcal_{\delta'}$, $\tau-d/2<\delta'\leq \delta$ and $\delta'\in \Delta_{\tau,d}^+$.  But by Lemma \ref{lem:projection}, 
this would imply $A\in \A_{j(\delta')}$; which is a contradiction, since $j(\delta')\leq j(\delta)$.
 It follows that the latter cannot occur, and hence,  $\K_\delta$ is closed.
Similarly, 
\begin{align*}
\pr(\widehat X_{\delta})&=\pr\bigl(\widehat X_{(\tau-d/2)}\cup \bigcup _{\delta'\leq \delta\, ,\, \delta'\in \Delta_{\tau,d}^-} \Bcal_{\delta'}  
\cup \bigcup _{\delta'< \tau-d/2\, ,\, \delta'\in \Delta_{\tau,d}^+} \Bcal_{\delta'}   \bigr) \\
&\qquad\qquad\cup \bigcup_{d/2<k\leq j(\delta)} \A_{k}\cup \bigcup_{\tau-d/2<\delta'\leq \delta\, ,\, \delta'\in \Delta_{\tau,d}^+} \pr(\Bcal_\delta) \\
&=\K_\delta\cup   \bigcup_{d/2\leq k\leq j(\delta)} \A_k
\end{align*}
and the union is disjoint. It follows also that
$$
\pr(\widehat X_\delta'')=\K_\delta\cup   \bigcup_{d/2\leq k< j(\delta)} \A_k
$$
Hence, $\pr(\widehat X_\delta)\setminus \K_\delta=X_{j(\delta)}^\A$, $\pr(\widehat X_{j(\delta)}'')\setminus \K_\delta=X_{j(\delta)-1}^\A$,
and \eqref{eqn:ab} follows from \eqref{eqn:xhat} by excision.
\end{proof}

\begin{proof}[Proof of \eqref{eqn:b_prime}]
First consider the case $\delta\in \Delta_{\tau,d}^+ \cap (\tau-d/2, 2\tau-d)$.
We have
\begin{align*}
X_\delta''&=\bigl( X_{(\tau-d/2)}\cup \bigcup _{\delta'\leq \delta\, ,\, \delta'\in \Delta_{\tau,d}^-} \Bcal_{\delta'}  
\cup \bigcup _{\delta'< \tau-d/2\, ,\, \delta'\in \Delta_{\tau,d}^+} \Bcal_{\delta'}   \bigr)\setminus \pr^{-1}(\A_{j(\delta)})
 \\
&\qquad\qquad\cup \bigcup_{d/2<k< j(\delta)} \A_{k}\cup \bigcup_{\tau-d/2<\delta'< \delta\, ,\, \delta'\in \Delta_{\tau,d}^+} \Bcal_\delta 
\end{align*}
whereas 
$
X_\delta'=X_{\delta_1}\cup \Bcal_\delta
$, 
where $\delta_1$ is the predecessor of $\delta $ in $\Delta_{\tau,d}$.
Also,  $X_\delta''=X_{\delta_1}\setminus \pr^{-1}(\A_{j(\delta)}) $.  We then have the following diagram
\begin{equation} \label{eqn:trick}
\xymatrix{
\cdots \ar[r]  &\ar[d]^{f} H^p_\G(X_\delta', X_\delta'') \ar[r] &\ar[d]^{g} H^p_\G(X_\delta') \ar[r]  & \ar[d]^{\cong} H^p_\G(X_\delta'') \ar[r] & \cdots \\
\cdots \ar[r] &H^p_\G(X_{\delta_1}, X_\delta'') \ar[r] & H^p_\G(X_{\delta_1}) \ar[r]  & H^p_\G(X_\delta'') \ar[r] &\cdots
}
\end{equation}
where $f$ and $g$ are induced by the inclusion $X_{\delta_1}\hookrightarrow X_\delta'$.  By  Lemma \ref{lem:claim} and \eqref{eqn:bottprime} (see Remark \ref{rem:bott}), it follows that $g$ is surjective and
$$\ker g = H^\ast_\G(\nu_{II,\delta}, \nu'_{II,\delta})\simeq H^{\ast-2j(\delta)}_\G(\B_{\delta})\simeq H^{\ast-2j(\delta)}_{S^1}(S^{j(\delta)}M\times\Jac_{d-j(\delta)} M)$$
by Thom isomorphism.  Chasing through the diagram, it follows that $f$ is also surjective with the same kernel.  We conclude that
\begin{equation} \label{eqn:step_one}
 H^\ast_\G(X_\delta', X_\delta'')\simeq H^\ast_\G(X_{\delta_1}, X_\delta'')\oplus H^{\ast-2j(\delta)}_{S^1}(S^{j(\delta)}M\times\Jac_{d-j(\delta)} M)
\end{equation} 
It remains to compute the first factor on the right hand side.  To begin, notice that 
$$
\bigcup_{d/2<k< j(\delta)} \A_{k}\cup \bigcup _{\tau-d/2<\delta'< \delta\, ,\, \delta'\in \Delta_{\tau,d}^-} \Bcal_{\delta'}\cup  \bigcup_{\tau-d/2<\delta'< \delta\, ,\, \delta'\in \Delta_{\tau,d}^+} \Bcal_\delta 
$$
is contained in $X_\delta''$ and closed in $X_{\delta_1}$.  It follows by excision that
$$
H^\ast_\G(X_{\delta_1}, X_\delta'')\simeq H^\ast_\G(X_{\tau-d/2}, X_{\tau- d/2}\setminus \pr^{-1}(\A_{j(\delta)}))
$$
Next, we observe that 
$$
\A_{ss}\cup \bigcup _{\delta'< \tau-d/2\, ,\, \delta'\in \Delta_{\tau,d}^-} \Bcal_{\delta'}  
$$
 is contained in $X_{\tau-d/2}\setminus \pr^{-1}(\A_{j(\delta)})$ and closed in $X_{\tau-d/2}$.  This is clear for $\A_{ss}$.  More generally, 
 if $(E,\Phi)$ in this set and $\Phi\not\equiv 0$, then $\mu_+(E)>\tau>j(\delta)$, and elements in the strata of type $\IIminus$ cannot specialize to points in $\IIplus$.
 Again applying  excision, we have
$$
H^\ast_\G(X_{\delta_1}, X_\delta'')\simeq H^\ast_\G(Y_\delta , Y_\delta\setminus \pr^{-1}(\A_{j(\delta)})) \\
$$
where
$$
Y_\delta= \B^{\tau}_{ss}\cup  \bigcup_{0<\delta'\leq \tau-d/2\, ,\, \delta'\in \Delta_{\tau,d}^+} \Bcal_{\delta'} 
$$
We make a third excision of the closed set 
$$
\bigcup_{\tau-j(\delta)<\delta'\leq \tau-d/2\, ,\, \delta'\in \Delta_{\tau,d}^+} \Bcal_{\delta'} 
$$
and a final excision of the subset 
$$
{\mathcal D}_\delta=\bigl\{\B^{\tau}_{ss}\cup  \bigcup_{0<\delta'\leq \tau-j(\delta)\, ,\, \delta'\in \Delta_{\tau,d}^+} \Bcal_\delta\bigr\}   \cap \bigl( \bigcup_{k>j(\delta)}\pr^{-1}(\A_k)\bigr)
$$
Notice that 
$$
\bigl\{\B^{\tau}_{ss}\cup  \bigcup_{0<\delta'\leq \tau-j(\delta)\, ,\, \delta'\in \Delta_{\tau,d}^+} \Bcal_\delta\bigr\} \setminus {\mathcal D}_\delta= \B^{j(\delta)}_{ss}
$$
We conclude that
$$
H^\ast_\G(X_{\delta_1}, X_\delta'')\simeq H^\ast_\G(\B^{j(\delta)}_{ss}, \B^{j(\delta)}_{ss}\setminus\pr^{-1}(\A_{j(\delta)}))
$$
Choose $\varepsilon>0$ small, and let $\tau'=j(\delta)-\varepsilon$.  
Then with respect to the \emph{$\tau'$-stratification}, the right hand side above is
$
\simeq H^\ast_\G(\B^{\tau'}_{ss}\cup \Bcal_\varepsilon^{\tau'},   \, \B^{\tau'}_{ss})
$
where $\varepsilon\in \Delta_{\tau'}^-$ is the lowest $\tau'$-critical set.  Since $\varepsilon<\tau'-d/2$, it follows from Lemma \ref{lem:claim} that the long exact sequence
 \eqref{eqn:prime} splits for this stratum.
Hence, we have
\begin{equation} \label{eqn:step_two}
H^\ast_\G(X_{\delta_1}, X_\delta'')\simeq H^\ast_\G(\B^{\tau'}_{ss}\cup \Bcal_\varepsilon^{\tau'},   \, \B^{\tau'}_{ss})\simeq H^{\ast-2(2j(\delta)-d+g-1)}_{S^1}(S^{d-j(\delta)}M\times\Jac_{j(\delta)}(M))
\end{equation}
(notice that $j_{\tau'}(\varepsilon)=j_\tau(\delta)$).
Eqs.\ \eqref{eqn:step_one} and \eqref{eqn:step_two}, combined with Proposition \ref{prop:cohomology},
 complete the proof.   In case $\delta\not\in I_{\tau,d}$, note that by definition 
$
H^\ast_\G(X_\delta', X_\delta'')\simeq H^\ast_\G(X_{\delta_1}, X_\delta'')
$.
The part of the proof following \eqref{eqn:step_one}   now applies verbatim to this case.
\end{proof}

\begin{proof}[Proof of Theorem \ref{thm:bott}] 
For $\delta\not\in  \Delta^+_{\tau,d}\cap[\tau-d/2,\tau]$, or $\delta=\tau-d/2$ and $d>4g-4$, we have proven the result directly (see the discussion following Theorem \ref{thm:bott} and also Remark \ref{rem:bott}). For $\delta\in \Delta^+_{\tau,d}\cap (\tau-d/2,\tau]$, the result follows from Proposition \ref{prop:key} and the five lemma. 
\end{proof}

\subsection{Perfection of the stratification for large degree} \label{sec:large_degree}
Note that Lemma \ref{lem:claim} shows that the long exact sequence
\eqref{eqn:prime} splits for all $\delta \notin \Delta_{\tau, d}^+
 \cap [\tau - d/2, \tau]$, and also for 
$\delta = \tau - d/2$ if $d > 4g-4$.
 Therefore it remains to show that \eqref{eqn:prime} splits
 for $\delta \in \Delta_{\tau,d}^+ \cap (\tau - d/2, \tau]$.

Firstly we consider the case where $\delta \in \Delta_{\tau,d}^+
 \cap [2\tau - d, \tau]$, which corresponds to a stratum of type $\Ib$. 
Proposition \ref{prop:key}  shows that the vertical long exact sequence splits 
and the map $\xi$ is injective in the following commutative diagram.

\begin{equation*}
\xymatrix{
 & \vdots \ar[d] \\
 \cdots \ar[r] & H_\mathcal{G}^p(X_\delta, X_{\delta_1}) \ar[r]^{\alpha^p} \ar[d]^{\zeta^p} & H_\mathcal{G}^p(X_\delta) \ar[r] & H_\mathcal{G}^p(X_{\delta_1}) \ar[r] & \cdots \\
 H_\mathcal{G}^p(\nu_{I, \delta}, \nu_{I, \delta}'') \ar[d] 
\ar[r]^\cong & H_\mathcal{G}^p(X_\delta, X_\delta'') \ar[d] \ar[ur]^\xi \\
H_\mathcal{G}^p(\omega_\delta, \nu_{I, \delta}'')
 \ar[r]^\cong \ar[d] & H_\mathcal{G}^p(X_{\delta_1}, X_\delta'') \ar[d] \\
 0 & \vdots 
}
\end{equation*}
Therefore the map $\alpha^p$ is injective, and so the horizontal long exact sequence splits also.

Next, suppose
 $\delta \in \Delta_{\tau, d}^+ \cap (\tau - d/2, 2\tau - d)$. 
For this we need the following lemma.

\begin{lemma}
When $\delta \in \Delta_{\tau, d}^+ \cap (\tau - d/2, 2\tau - d)$, 
then the isomorphisms $H_\mathcal{G}^* (X_\delta, X_\delta'') \cong H_\mathcal{G}^*(\nu_{I, \delta}, \nu_{I, \delta}'')$ and $H_\mathcal{G}^*(X_{\delta_1}, X_\delta'') \cong H_\mathcal{G}^*(\omega_\delta, \nu_{I, \delta}'')$ in equivariant cohomology are induced by an inclusion of triples $(\nu_{I, \delta}, \omega_\delta, \nu_{I, \delta}'') \hookrightarrow (X_\delta, X_{\delta_1}, X_\delta'')$.
\end{lemma}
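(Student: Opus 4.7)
The plan is to verify the inclusion of triples exists, and then to identify the induced maps on relative cohomology with the isomorphisms of Proposition \ref{prop:key}.

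First, I would verify the three containments. The outermost $\nu_{I,\delta} \subset X_\delta$ is immediate from the slice description of Proposition \ref{prop:slice} combined with the convergence of the gradient flow (Theorem \ref{thm:morse_hn}). For $\omega_\delta \subset X_{\delta_1}$, observe that $\nu_{I,\delta} \setminus \omega_\delta$ is the locus $\{a = 0,\, \varphi_2 = 0\}$, which under the slice map corresponds to split pairs $(A_1 \oplus A_2,\varphi_1)$ with $\varphi_1 \in H^0(L_1)$. Such pairs lie in $\A_{j(\delta)}$ (when $\varphi_1 = 0$) or in $\Bcal_\delta$ (when $\varphi_1 \neq 0$, because then $L_1$ is the line subbundle generated by $\varphi_1$, of degree $j(\delta)$). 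Hence $\omega_\delta \subset X_\delta \setminus (\A_{j(\delta)} \cup \Bcal_\delta) = X_{\delta_1}$. For $\nu''_{I,\delta} \subset X_\delta''$, when $a\neq 0$ the deformed bundle $E'$ is a non-trivial extension $0 \to L_2 \to E' \to L_1 \to 0$; any line subbundle of $E'$ of degree $j(\delta)$ would split this extension, so $\mu_+(E') < j(\delta)$, giving $A'\notin \A_{j(\delta)}$ and hence $(A',\Phi')\in X_\delta''$.

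Next, to identify the induced maps on cohomology with the isomorphisms of Proposition \ref{prop:key}, I would retrace those proofs as compositions of inclusion-induced maps. The first isomorphism, from \eqref{eqn:a_prime}, is realized as the chain: excision to $(\widehat X_\delta,\widehat X_\delta'')$; deformation retraction via $\Phi$-scaling to $(\pr(\widehat X_\delta),\pr(\widehat X_\delta''))$; further excision to the Atiyah-Bott pair $(X^\A_{j(\delta)}, X^\A_{j(\delta)-1})$; and finally the homotopy equivalence of this Atiyah-Bott pair with $(\nu_{I,\delta}, \nu''_{I,\delta})$ used to establish \eqref{eqn:nudoubleprime}. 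Each step is induced by an inclusion of pairs (or its homotopy inverse), and their composition factors through the direct inclusion $(\nu_{I,\delta},\nu''_{I,\delta}) \hookrightarrow (X_\delta, X_\delta'')$. By two-out-of-three for isomorphisms in a composition, this direct inclusion induces the isomorphism, which therefore coincides with the one constructed in Proposition \ref{prop:key}.

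For the second isomorphism the argument is analogous but more intricate. The proof of \eqref{eqn:b_prime} reduces $H^*_\G(X_{\delta_1},X_\delta'')$ by a sequence of excisions to a relative cohomology group for the shifted parameter $\tau'=j(\delta)-\varepsilon$, at which point Lemma \ref{lem:claim} is applied at a low stratum $\Bcal_\varepsilon^{\tau'}$. The excision chain is inclusion-induced, and the Morse-Bott identification at $\Bcal_\varepsilon^{\tau'}$ (whose negative normal in the $\tau'$-stratification naturally matches $\omega_\delta$ relative to $\nu''_{I,\delta}$) is likewise inclusion-induced, being an instance of an already-established Morse-Bott statement. Composing as before yields the isomorphism induced by $(\omega_\delta,\nu''_{I,\delta}) \hookrightarrow (X_{\delta_1},X_\delta'')$. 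The main obstacle is precisely this last step: verifying naturality through the $\tau$-variation, i.e.\ that the Morse-Bott pair at $\Bcal_\varepsilon^{\tau'}$ in the shifted stratification corresponds, under the excision chain, to exactly the triple $(\omega_\delta,\nu''_{I,\delta})$ in the original $\tau$-stratification.
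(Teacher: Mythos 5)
Your overall strategy is sound and mirrors the paper's intent: realize the isomorphisms of Proposition \ref{prop:key} as compositions of excisions, deformation retractions, and Morse--Bott identifications, and then check that the direct inclusion factors compatibly. But you explicitly flag, and leave unresolved, the step you yourself call ``the main obstacle'': verifying that the excision chain carries the Morse--Bott pair at $\Bcal^{\tau'}_\varepsilon$ in the shifted $\tau'$-stratification to exactly $(\omega_\delta,\nu''_{I,\delta})$. That is precisely where the paper's proof does its work, and without it the lemma is not proved. The paper handles it very concretely: after excising down to $(\B^{\tau'}_{ss}\cup\Bcal^{\tau'}_\varepsilon,\,\B^{\tau'}_{ss})$, one further excises all but a tubular neighborhood $W$ of $\Bcal^{\tau'}_\varepsilon$, then deformation retracts $\Phi$ so that $\|\Phi\|$ is small (producing a subpair $W_0$), and then applies Lemma \ref{lem:retraction1} slice by slice: since $\Phi\neq 0$ at these critical points, that lemma furnishes an explicit $\G$-equivariant retraction of the slice onto $\Scal^{neg.}_{(A,\Phi)}$, and the resulting spaces are homeomorphic to $\omega_\delta$ and $\nu''_{I,\delta}$. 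An abstract appeal to ``naturality through the $\tau$-variation'' does not substitute for this; some geometric input pinning the pair down at the slice level is required.

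Separately, your verification of $\omega_\delta\subset X_{\delta_1}$ is logically backwards. You show that $\nu_{I,\delta}\setminus\omega_\delta$ lies in $\A_{j(\delta)}\cup\Bcal_\delta$ and conclude $\omega_\delta\subset X_\delta\setminus(\A_{j(\delta)}\cup\Bcal_\delta)$, but that inference requires the converse containment, namely that $\omega_\delta$ \emph{avoids} $\A_{j(\delta)}\cup\Bcal_\delta$, which is what needs proving. (The statement is in fact true --- when $a\neq 0$ the extension is nonsplit so $\mu_+<j(\delta)$, and when $a=0$, $\varphi_2\neq 0$ the section $\varphi_1+\varphi_2$ projects nontrivially to $L_2$ so one lands in a $\Delta^-$ stratum with smaller index --- but the argument as written does not establish it.) Your verification of $\nu''_{I,\delta}\subset X''_\delta$ via the nonsplit-extension argument is fine.
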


\begin{proof}
The first isomorphism is contained in \eqref{eqn:a_prime}. To see 
the second isomorphism, note that the results of the last section show that 
$H_\mathcal{G}^*( X_{\delta_1}, X_\delta'') \cong H_\mathcal{G}^*( 
\mathcal{B}_{ss}^{\tau'} \cup \mathcal{B}_\varepsilon^{\tau'}, 
\mathcal{B}_{ss}^{\tau'})$, where $\varepsilon \in \Delta_{\tau'}^-$ is 
the lowest $\tau'$ critical set. Excise all but a neighborhood of 
$\mathcal{B}_\varepsilon^{\tau'}$, and deformation retract $\Phi$ so 
that $\| \Phi \|$ is small. Call these new sets $W$ and $W_0$,  respectively. Then
\begin{equation*}
H_\mathcal{G}^*(\mathcal{B}_{ss}^{\tau'} \cup \mathcal{B}_\varepsilon^{\tau'}, 
\mathcal{B}_{ss}^{\tau'}) \cong H_\mathcal{G}^*(W, W_0) 
\end{equation*}
Since $\Phi \neq 0$, then we can apply Lemma \ref{lem:retraction1} to the slices 
within the spaces $W$ and $W_0$, and the resulting spaces are 
homeomorphic to $\omega_\delta$ and $\nu_{I, \delta}''$ respectively.
\end{proof}

The previous lemma together with the 
surjection $\xi'' : H_\mathcal{G}^*( \nu_{I, \delta}^-, \nu_{I, \delta}'') 
\rightarrow H_\mathcal{G}^*(\omega_\delta, \nu_{I, \delta}'')$ from 
\eqref{eqn:type-Ia-diagram} implies that the map $\xi_g''$ is 
surjective in the following commutative diagram.

\begin{equation*}
\xymatrix{
& \vdots \ar[d] \\
\cdots \ar[r] & H_\mathcal{G}^p(X_\delta, X_{\delta_1}) \ar[r] \ar[d] & H_\mathcal{G}^p(X_\delta) \ar[r] & H_\mathcal{G}^p(X_{\delta_1}) \ar[r] & \cdots \\
 & H_\mathcal{G}^p(X_\delta, X_\delta'') \ar[d]^{\xi_g''} \ar[ur]^{\xi_g} \\
 & H_\mathcal{G}^p(X_{\delta_1}, X_\delta'') \ar[d] \\
 & \vdots
}
\end{equation*}
The isomorphism \eqref{eqn:ab} together with the results of 
\cite{AtiyahBott83} show that the map $\xi_g$ is injective, 
and so the same argument as before shows that the horizontal long exact sequence splits.

\subsection{The case of low degree} \label{sec:low_degree}
By the results of the previous section,  there is only one critical stratum unaccounted for on the way to completing the proof of Theorem \ref{thm:perfect} for $1\leq d\leq 4g-4$.  Namely,
 we need  to analyze what happens when we attach the minimal Yang-Mills stratum $\A_{ss}$, which is the lowest critical set of Type {\bf I}.  More precisely, from  \eqref{eqn:x_xprime}, we need to
 show that the inclusion $X'_{\tau-d/2}\hookrightarrow  X_{\tau-d/2}$ 
 induces a surjection in $\G$-equivariant rational cohomology  for all $\tau\in (d/2,d)$.
 Notice that by \eqref{eqn:xprime},
$ X'_{\tau-d/2}=X_{\delta_1}$ for $d$ odd, so this is precisely what we need to prove;
and if $d$ is even, then the above statement together with
Lemma \ref{lem:claim}
will prove that $X_{\delta_1}\hookrightarrow  X_{\tau-d/2}$ 
 induces a surjection in $\G$-equivariant rational cohomology 
in this case as well.
 
 In low degree, the negative normal directions exist only over a Brill-Noether subset of $\A_{ss}$, whose cohomology is unknown, and the
  dimension of the fiber jumps in a complicated way;  it is not even clear that there is a good Morse-Bott lemma of the type \eqref{eqn:bott} in this case.
 
 Hence, in order
 to prove surjectivity in this case
we will use an indirect argument via embeddings of the space of pairs of
 degree $d$ into corresponding pairs of larger degree.
  More precisely, this is defined as follows.
Choose a point $p\in M$, and let ${\mathcal O}(p)$
 denote the holomorphic line bundle with divisor $p$.   We 
also choose a hermitian metric on ${\mathcal O}(p)$. 
 Choose a holomorphic section $\sigma_p$ of ${\mathcal O}(p)$ with a 
simple zero at $p$.  Note that $\sigma_p$ is unique up to a nonzero multiple. 
 A holomorphic (and hermitian) structure on  the complex 
vector bundle $E$ induces one on the bundle $\widetilde E=E\otimes 
{\mathcal O}(p)$. Moreover, if $\Phi\in H^0(E)$, then
 $\widetilde\Phi=\Phi\otimes\sigma_p\in H^0(\widetilde E)$.  The unitary gauge group $\G$ of $E$ is 
canonically isomorphic to that of $\widetilde E$.  Hence, we have a $\G$-equivariant embedding $\B(E)\hookrightarrow 
\B(\widetilde E)$.  For simplicity, we will use the notation $\B=\B(E)$ and $\widetilde \B=\B(\widetilde E)$.

Let $\tilde d=d+2$ and $\tilde \tau=\tau+1$.  Then we note the following properties:
\begin{eqnarray*}
\deg\widetilde E=\tilde d & \Delta_{\tilde \tau,\tilde d}=\Delta_{\tau,d} \\
\deg \widetilde\Phi=\deg \Phi+1 & I_{\tilde \tau,\tilde d}= I_{\tau,d} \\
\mu_+(\widetilde E)=\mu_+(E)+1 & j_{\tilde \tau,\tilde d}(\delta)= j_{\tau,d} +1
\end{eqnarray*}
It follows easily that the  inclusion respects the Harder-Narasimhan stratification, i.e.\ for all $\delta\in 
\Delta_{\tau,d}$, $\B_\delta\hookrightarrow \widetilde \B_\delta$,  $X_\delta\hookrightarrow \widetilde X_\delta$,  and $X_\delta'\hookrightarrow 
\widetilde X_\delta'$, where the tilde's have the obvious meaning.  In particular, if we fix $\tau_{max}=d-\varepsilon$, 
for $\varepsilon $ small, then $\B^{\tau_{max}}_{ss}\hookrightarrow \widetilde \B^{\tilde \tau_{max}}_{ss}$.  Notice that while $\B^{\tau_{max}}_{ss}$ 
gives the ``last" moduli space in the sense that there are no critical values between ${\tau_{max}}$ and $d$ 
(provided $\varepsilon$ is sufficiently small),  $\widetilde \B^{\tilde\tau_{max}}_{ss}$ gives the ``second to last" moduli 
space in the sense that there is precisely one critical value between $\tilde \tau_{max}$ and $\tilde d$.

\begin{lemma} \label{lem:maxtau}
The
 inclusion $\B^{\tau_{max}}_{ss}\hookrightarrow \widetilde \B^{\tilde \tau_{max}}_{ss}$ induces a surjection in $\G$-equivariant rational cohomology.
\end{lemma}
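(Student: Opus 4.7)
The plan is to realize the image of the twist map as the zero locus of a transverse $\G$-equivariant section of a rank-2 vector bundle on $\widetilde\B^{\tilde\tau_{max}}_{ss}$, and then use the Thom isomorphism together with generation of cohomology by tautological classes to extract the surjection.

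First I would observe the structural identification. The map $(A,\Phi)\mapsto (A\otimes\mathcal{O}(p),\Phi\otimes\sigma_p)$ realizes $\B^{\tau_{max}}_{ss}$ as a closed $\G$-invariant subspace $\mathcal{S}\subset\widetilde\B^{\tilde\tau_{max}}_{ss}$. Since $\tilde\tau_{max}$-semistability forces $\deg\widetilde\Phi\leq 1$, a pair $(\widetilde A,\widetilde\Phi)\in\widetilde\B^{\tilde\tau_{max}}_{ss}$ lies in $\mathcal{S}$ precisely when $\widetilde\Phi(p)=0$. Equivalently, let $V\to\widetilde\B^{\tilde\tau_{max}}_{ss}$ denote the rank-2 $\G$-equivariant bundle whose fiber at $(\widetilde A,\widetilde\Phi)$ is $\widetilde E_p$. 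Then the evaluation section $s_p(\widetilde A,\widetilde\Phi)=\widetilde\Phi(p)$ is $\G$-equivariant with zero locus exactly $\mathcal{S}$.

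Next I would verify transversality of $s_p$. Using the slice of Proposition \ref{prop:slice} and Lemma \ref{lem:h2} (which applies since $\widetilde\Phi\not\equiv 0$ on the locus in question), the linearization of $s_p$ along a slice deformation $(a,\varphi)$ at a point of $\mathcal{S}$ is simply $\varphi(p)$. Surjectivity onto $\widetilde E_p$ follows from surjectivity of the global evaluation $H^0(\widetilde E)\to\widetilde E_p$, which holds once $\tilde d$ is sufficiently large; for small $d$, one simply iterates the twist until the degree bound is met. Hence $\mathcal{S}$ is a $\G$-invariant smoothly embedded complex submanifold of codimension 2 in $\widetilde\B^{\tilde\tau_{max}}_{ss}$.

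Transversality then gives the $\G$-equivariant Thom isomorphism
\begin{equation*}
H^*_\G\bigl(\widetilde\B^{\tilde\tau_{max}}_{ss},\ \widetilde\B^{\tilde\tau_{max}}_{ss}\setminus\mathcal{S}\bigr)\cong H^{*-4}_\G(\mathcal{S}),
\end{equation*}
which combined with the long exact sequence of the pair $(\widetilde\B^{\tilde\tau_{max}}_{ss},\mathcal{S})$ reduces the claim to lifting every class on $\mathcal{S}$ to a class on $\widetilde\B^{\tilde\tau_{max}}_{ss}$. I would achieve this by noting that $H^*_\G(\B^{\tau_{max}}_{ss})$ is generated by Atiyah-Bott tautological classes built from the universal bundle over $\B^{\tau_{max}}_{ss}\times M$, available independently of Theorem \ref{thm:perfection} because $\M_{\tau_{max},d}$ is concretely a projective bundle over $\Jac_d(M)$ (for $\tau$ near $d$), whose cohomology is generated by the hyperplane class and pullbacks from the base. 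Under the twist these tautological classes lift to the corresponding tautological classes on $\widetilde\B^{\tilde\tau_{max}}_{ss}$, shifted only by powers of $c_1(\mathcal{O}(p))$ in even degrees, which establishes surjectivity. The main technical obstacle is the compatibility of tautological-class lifts with the twist in the presence of the boundary map in the pair sequence; handling this requires a direct Chern class calculation for $V$ that pairs the Thom/Euler class with the generating tautological classes on $\mathcal{S}$.
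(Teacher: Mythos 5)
Your proposal contains the right core idea, but it is buried inside machinery that is both unnecessary and not justified, and the one step that actually carries the proof is left undone. First, the Thom/Gysin setup does not accomplish the reduction you claim: the statement to be proved \emph{is} that the restriction $H^*_\G(\widetilde\B^{\tilde\tau_{max}}_{ss})\to H^*_\G(\mathcal{S})$ is surjective, and that map is not one of the arrows in the long exact sequence of the pair $(\widetilde\B^{\tilde\tau_{max}}_{ss},\,\widetilde\B^{\tilde\tau_{max}}_{ss}\setminus\mathcal{S})$; after the excursion through the Thom isomorphism you are back exactly where you started, needing to lift every class on $\mathcal{S}$. Second, the transversality claim on which that excursion rests is not established. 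A slice deformation with $a=0$ is a holomorphic section $\varphi\in H^0(\widetilde E)$, so surjectivity of the linearization of $s_p$ amounts to surjectivity of the evaluation $H^0(\widetilde E)\to\widetilde E_p$, equivalently injectivity of $H^1(E)\to H^1(E\otimes\mathcal{O}(p))$. On $\B^{\tau_{max}}_{ss}$ the underlying bundle $E$ is an extension $0\to\mathcal{O}\to E\to\det E\to 0$ (since $\Phi$ is nowhere vanishing there), so $H^1(E)$ receives contributions from $H^1(\mathcal{O})$ and need not vanish for any $d$; surjectivity of evaluation is then a Brill--Noether type condition on $(E,p)$, not a consequence of $\tilde d$ being large. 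Your fallback of ``iterating the twist'' does not help: the lemma concerns the specific inclusion $\B(E)\hookrightarrow\B(\widetilde E)$, and replacing it by an inclusion into still higher degree proves a different statement.

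The part of your argument that does the work is the last one: $\M_{\tau_{max},d}$ is a projective bundle over $\Jac_d(M)$, so by Leray--Hirsch its cohomology is generated by $(\det)^*H^*(\Jac_d(M))$ together with one class of degree $2$ on the fiber, and the Jacobian classes visibly extend because $\det$ commutes with the twist and $\Jac_d(M)\to\Jac_{\tilde d}(M)$ is an isomorphism. This is precisely the paper's proof. But you then flag the lifting of the degree-$2$ fiber class as ``the main technical obstacle'' and do not resolve it, whereas this is the only nontrivial point: the hyperplane class of the projective bundle is not a priori the restriction of anything from $\widetilde\M_{\tilde\tau_{max},\tilde d}$. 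The paper disposes of it in one line: the inclusion is holomorphic and the target is projective, so the restriction of the K\"ahler class of $\widetilde\M_{\tilde\tau_{max},\tilde d}$ to a fiber $\mathbb{P}^{d+g-2}$ of $\det$ is a K\"ahler class there, hence a nonzero multiple of the hyperplane class. Without this (or an equivalent Chern-class computation actually carried out), your argument only shows that $\imath^*$ surjects onto the subring pulled back from the Jacobian.
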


\begin{proof}
Since $\tau$ is generic, it suffices to prove the result on the level of moduli spaces, i.e.\ that the inclusion
$\imath:\M_{\tau_{max},d}\hookrightarrow \widetilde \M_{\tilde \tau_{max},\tilde d}$ induces a surjection in  cohomology.
Consider the determinant map $(E,\Phi)\mapsto \det E$.  We have the following diagram
\begin{equation}
\xymatrix
{\M_{\tau_{max},d} \ar[r]^{\imath} \ar[d]^{\det} &\widetilde \M_{\tilde \tau_{max},\tilde d} \ar[d]^{\det}\\
\Jac_d(M) \ar[r]^{\jmath} &\Jac_{\tilde d}(M) 
}
\end{equation}
Now $\M_{\tau_{max},d}$ is  the projectivization of a vector bundle (cf.\ \cite{Thaddeus94}).  Hence, by the Leray-Hirsch theorem its cohomology ring is generated by the embedding  $(\det)^\ast(H^\ast(\Jac_d(M)))$, and a $2$-dimensional class generating the cohomology of the fiber. Since $\imath^\ast(\det)^\ast=(\det)^\ast\jmath^\ast$, and $j^\ast$ is an isomorphism, it follows that $\imath^\ast$ is surjective onto $(\det)^\ast(H^\ast(\Jac_d(M)))$.  It remains to show that the $2$-dimensional class is  in the image of $\imath^\ast$.  But since $\imath$ is holomorphic and $\M_{\tilde \tau_{max},\tilde d}$ is projective, the K\"ahler class of $\M_{\tilde \tau_{max},\tilde d}$ restricted to the image generates the cohomology of the fiber.
\end{proof}

\begin{lemma} \label{lem:delta}
Suppose $\delta\in \Delta_{{\tau_{max}},d}$, $\delta<{\tau_{max}}-d/2$.  Then the inclusion $X_\delta\hookrightarrow \widetilde X_\delta$ induces a surjection in $\G$-equivariant rational cohomology.  The same holds for $X'_{\tau-d/2}\hookrightarrow \widetilde X'_{\tilde\tau-\tilde d/2}$.
\end{lemma}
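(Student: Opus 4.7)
The plan is to proceed by induction on $\delta \in \Delta_{\tau_{max}, d}$ (ordered as in Section \ref{subsec:Harder-Narasimhan}) within the range $\delta \leq \tau_{max} - d/2$, with base case $\delta = 0$ furnished directly by Lemma \ref{lem:maxtau}, since $X_0 = \B^{\tau_{max}}_{ss}$. For the inductive step at $\delta < \tau_{max} - d/2$ with predecessor $\delta_1$, I first observe that $\delta$ (and the corresponding tilded index) lies strictly below the minimal Yang-Mills stratum and outside the problematic range $\Delta^+_{\tau_{max}, d} \cap [\tau_{max} - d/2, \tau_{max}]$. Lemma \ref{lem:claim} then yields split short exact sequences for both $X_\delta$ and $\widetilde X_\delta$, and the inclusion-induced map assembles into the commutative diagram
\begin{equation*}
\xymatrix{
0 \ar[r] & H^p_\G(\widetilde X_\delta, \widetilde X_{\delta_1}) \ar[r] \ar[d]^{f_1} & H^p_\G(\widetilde X_\delta) \ar[r] \ar[d]^{f_2} & H^p_\G(\widetilde X_{\delta_1}) \ar[r] \ar[d]^{f_3} & 0 \\
0 \ar[r] & H^p_\G(X_\delta, X_{\delta_1}) \ar[r] & H^p_\G(X_\delta) \ar[r] & H^p_\G(X_{\delta_1}) \ar[r] & 0
}
\end{equation*}
The inductive hypothesis provides surjectivity of $f_3$, and by the snake lemma it then suffices to prove that $f_1$ is surjective.

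To analyze $f_1$, I would combine the Morse-Bott identification \eqref{eqn:bott1} with Corollary \ref{cor:smooth} and the Thom isomorphism, all applicable because every stratum indexed by $\delta < \tau_{max} - d/2$ is of Type $\IIplus$ or $\IIminus$ and hence has a linear negative normal bundle of constant rank. This rewrites $H^*_\G(X_\delta, X_{\delta_1})$ as a shifted copy of $H^*_\G(\eta_\delta)$, and similarly for the tilded version. By Proposition \ref{prop:critical_set_cohomology}, each $H^*_\G(\eta_\delta)$ decomposes via K\"unneth into $H^*(S^k M) \otimes H^*(\Jac_{d-k}(M)) \otimes H^*(BU(1))$, where $k = j(\delta)$ or $k = d - j(\delta)$ according to the type. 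Under the twist $L \mapsto L \otimes \mathcal{O}(p)$ and $\Phi \mapsto \Phi \otimes \sigma_p$, the Jacobian and $BU(1)$ factors are mapped by isomorphisms, while the symmetric power factor is mapped via $D \mapsto D + p$, which is precisely the MacDonald embedding $S^k M \hookrightarrow S^{k+1} M$. The decisive input is then (14.3) of \cite{MacDonald62}: this embedding induces a surjection on rational cohomology. Combining these inputs yields the desired surjectivity of $f_1$.

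For the statement about $X'_{\tau_{max} - d/2}$, I split by the parity of $d$. When $d$ is odd, \eqref{eqn:xprime} gives $X'_{\tau_{max} - d/2} = X_{\delta_1}$, and the conclusion is immediate from the main statement applied to $\delta_1$. When $d$ is even, $X'_{\tau_{max} - d/2} = X_{\delta_1} \cup \B_{\tau_{max} - d/2}$ with $\B_{\tau_{max} - d/2}$ an additional Type $\IIplus$ stratum; the same diagrammatic argument with $(X'_{\tau_{max} - d/2}, X_{\delta_1})$ in place of $(X_\delta, X_{\delta_1})$, using \eqref{eqn:bottprime} and the splitting of \eqref{eqn:1prime}, once more reduces to the MacDonald surjectivity.

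The main technical obstacle I anticipate lies in the second step: rigorously identifying $f_1$, as a map of relative cohomology, with the MacDonald pullback on the symmetric product factor. The twist $E \mapsto E \otimes \mathcal{O}(p)$ does not preserve the $\tau$-vortex equations, because $\sigma_p$ vanishes at $p$, so it does not map $\eta_\delta$ into $\widetilde \eta_\delta$ directly; it only maps into the stratum $\widetilde \B_\delta$, and the comparison with $\widetilde \eta_\delta$ is realized only up to the gradient flow retraction. Justifying that this composition induces the expected MacDonald map at the cohomological level — and that the Thom degree shift for Type $\IIplus$ (where $\tilde\sigma(\delta) = \sigma(\delta) + 1$) is absorbed correctly — will rely on the naturality of the Morse-Bott isomorphism and the $\G^\C$-equivariance of the twist.
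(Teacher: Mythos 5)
Your proposal is correct and follows essentially the same route as the paper: induction up the stratification starting from Lemma \ref{lem:maxtau}, the commutative diagram of split short exact sequences from Lemma \ref{lem:claim}, and reduction of the relative map to $H^\ast_\G(\tilde\eta_\delta)\to H^\ast_\G(\eta_\delta)$ via \eqref{eqn:bott1} and \eqref{eqn:thom_eta}, which is then identified through Proposition \ref{prop:critical_set_cohomology} with the MacDonald inclusion $S^{j(\delta)}M\hookrightarrow S^{j(\delta)+1}M$ and its surjectivity on rational cohomology. The technical point you flag about identifying the twist-induced map on critical sets with the MacDonald pullback is real but is exactly what the paper delegates to the argument of \cite[Sect.\ 4]{DWWW}, so there is no gap relative to the paper's own level of detail.
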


\begin{proof}
By Lemma \ref{lem:maxtau}, the result holds for the semistable stratum.  Fix $\delta<\tau-d/2$, and let $\delta_1$ be its predecessor in $ \Delta_{{\tau_{max}},d}$.  By induction, we may assume the result holds for $\delta_1$.  By Lemma \ref{lem:claim} we have the following diagram:
\begin{equation} \label{eqn:embedding}
\xymatrix{
0 \ar[r] & H_{\mathcal{G}}^p(\widetilde X_\delta, \widetilde X_{\delta_1}) \ar[d]^{f} \ar[r] & H_{\mathcal{G}}^p(\widetilde X_\delta) \ar[r] \ar[d]^{g} & H_{\mathcal{G}}^p(\widetilde X_{\delta_1}) \ar[d]^{h} \ar[r] & 0\\
0 \ar[r] & H_\mathcal{G}^p(X_\delta, X_{\delta_1}) \ar[r] & H_\mathcal{G}^p(X_\delta)\ar[r]  & H_\mathcal{G}^p(X_{\delta_1}) \ar[r] & 0 
}
\end{equation}
By the inductive hypothesis, $h$ is surjective.  On the other hand, by \eqref{eqn:bott1} and \eqref{eqn:thom_eta},
 surjectivity of $f$ is equivalent to surjectivity of the map $H^\ast_\G(\tilde\eta_\delta)\to H^\ast_\G(\eta_\delta)$.  
 From the description of critical sets (cf.\ Proposition \ref{prop:critical_set_cohomology}), this map is induced by the inclusion $S^{j(\delta)}M\hookrightarrow S^{j(\delta)+1}M$.
 Surjectivity then follows by the argument in  \cite[Sect. 4]{DWWW}.  Since both $f$ and $h$ are surjective, so is $g$. The result for any $\delta<\tau-d/2$ now follows by induction.  If $d$ is even, the exact same argument, with $\delta_1=$ the predecessor of $\tau-d/2$,  proves the statement for  $X'_{\tau-d/2}$ as well.
\end{proof}

\begin{lemma} \label{lem:induction}
  Suppose the inclusion $\widetilde X'_{\tilde\tau_{max}-\tilde d/2}\hookrightarrow \widetilde X_{\tilde\tau_{max}-\tilde d/2}$  induces a surjection in $\G$-equivariant rational cohomology.
Then the same is true for the inclusion $X'_{{\tau_{max}}-d/2}\hookrightarrow  X_{{\tau_{max}}-d/2}$.
\end{lemma}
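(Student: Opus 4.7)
The plan is a diagram chase exploiting the twisting embedding $\iota : \B(E) \hookrightarrow \B(\widetilde E)$, $(A,\Phi) \mapsto (A \otimes {\mathcal O}(p), \Phi \otimes \sigma_p)$, together with Lemma \ref{lem:delta}. Under the shift $(\tau,d) \mapsto (\tau+1, d+2)$ we have $\widetilde\tau_{max} - \widetilde d/2 = \tau_{max} - d/2$, and, as noted in the discussion preceding Lemma \ref{lem:maxtau}, $\iota$ is $\G$-equivariant and preserves the Harder--Narasimhan stratification. In particular, $\iota$ carries $X'_{\tau_{max}-d/2}$ into $\widetilde X'_{\widetilde\tau_{max}-\widetilde d/2}$ and $X_{\tau_{max}-d/2}$ into $\widetilde X_{\widetilde\tau_{max}-\widetilde d/2}$, so that $\iota$ induces a map of pairs.

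Pulling back via $\iota$ yields the commutative square
\[
\xymatrix{
H^p_\G(\widetilde X_{\widetilde\tau_{max}-\widetilde d/2}) \ar[r]^{\widetilde r} \ar[d]^{b} & H^p_\G(\widetilde X'_{\widetilde\tau_{max}-\widetilde d/2}) \ar[d]^{c} \\
H^p_\G(X_{\tau_{max}-d/2}) \ar[r]^{r} & H^p_\G(X'_{\tau_{max}-d/2})
}
\]
where the horizontal maps are induced by $X' \hookrightarrow X$. By the hypothesis of the lemma, $\widetilde r$ is surjective, and by the second statement of Lemma \ref{lem:delta} so is $c$. Given $\beta \in H^p_\G(X'_{\tau_{max}-d/2})$, first lift it to $\widetilde\beta \in H^p_\G(\widetilde X'_{\widetilde\tau_{max}-\widetilde d/2})$ via $c$, then to $\widetilde\alpha \in H^p_\G(\widetilde X_{\widetilde\tau_{max}-\widetilde d/2})$ via $\widetilde r$, and finally set $\alpha = b(\widetilde\alpha)$. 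Commutativity of the square then gives $r(\alpha) = c(\widetilde\beta) = \beta$, showing that $r$ is surjective.

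In this sense the argument is only a diagram chase, and the real content is the choice of the embedding $\iota$: it sidesteps the obstruction at the minimal Yang-Mills stratum in low degree, where the negative normal directions live only over a Brill-Noether subset of $\A_{ss}$ of jumping fiber dimension and the direct Morse-Bott analysis used in Remark \ref{rem:bott} is unavailable. The main obstacle is thus not internal to the lemma, but rather the legitimacy of iterating this step: applying $\iota$ a finite number of times raises the degree past $4g-4$, where the hypothesis of Lemma \ref{lem:induction} follows from Remark \ref{rem:bott} combined with Lemma \ref{lem:claim}; the present lemma then propagates surjectivity back down to any $d$ with $1 \leq d \leq 4g-4$, completing the proof of Theorem \ref{thm:perfect} in low degree.
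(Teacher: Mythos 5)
Your proof is correct and is essentially identical to the paper's own argument: both reduce the lemma to the commutative square induced by the twisting embedding, using the hypothesis for surjectivity of the top horizontal map and the second statement of Lemma \ref{lem:delta} for surjectivity of the right vertical map. The surrounding remarks about iterating the embedding to reach degree $d>4g-4$ also match how the paper deploys this lemma in the proof of Theorem \ref{thm:perfect}.
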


\begin{proof}
Consider the diagram
\begin{equation}
\xymatrix{
H_{\mathcal{G}}^p(\widetilde X_{\tilde\tau_{max}-\tilde d/2}) \ar[r] \ar[d] & H_{\mathcal{G}}^p(\widetilde X'_{\tilde\tau_{max}-\tilde d/2}) \ar[d]^{h} \ar[r] & 0\\
 H_\mathcal{G}^p(X_{\tau_{max}-d/2})\ar[r]  & H_\mathcal{G}^p(X'_{\tau_{max}-d/2}) \ar[r] & \cdots
}
\end{equation}
By Lemma \ref{lem:delta}, $h$ is surjective.  The result then follows immediately.
\end{proof}

\begin{lemma} \label{lem:anytau}
 Suppose the inclusion $X'_{\tau-d/2}\hookrightarrow  X_{\tau-d/2}$  induces a surjection in $\G$-equivariant rational cohomology for $\tau={\tau_{max}}$.  Then the same is true for all $\tau\in (d/2,d)$.  Moreover,  $\dim H^p_\G(X_{\tau-d/2}, X'_{\tau-d/2})$ is independent of $\tau$ for all $p$.
\end{lemma}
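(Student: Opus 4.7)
The plan has three main components, which I sketch in turn. First, I would use excision to establish the $\tau$-invariance of the relative cohomology, which immediately yields the second statement of the lemma. Second, I would observe that the pair $(X_{\tau-d/2}, X'_{\tau-d/2})$ is literally constant as $\tau$ varies within a chamber of $(d/2,d)\setminus C_d$, so the splitting condition is chamber-invariant. Third, I would analyze how the splitting behaves as $\tau$ crosses each critical value $\tau_c\in C_d$, and iterate downward from $\tau_{max}$.

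For the invariance, recall that by \eqref{eqn:x_xprime} and \eqref{eqn:xprime}, the set-theoretic difference $X_{\tau-d/2}\setminus X'_{\tau-d/2}=\A_{ss}$ does not depend on $\tau$ (for $d$ even, the added $\B_{\tau-d/2}$ piece already lies in $X'_{\tau-d/2}$). I would fix a compact subinterval $[\tau_0,\tau_{max}]\subset (d/2,d)$ and take a $\G$-invariant open neighborhood $U$ of $\A_{ss}$ in $\Bcal$ small enough that $\mu_+(E)\leq 2\tau_0-d/2$ for every $(E,\Phi)\in U$.  By Lemma \ref{lem:basic}, the condition $(E,\Phi)\in X_{\tau-d/2}$ is equivalent to the combination $\mu_+(E)\leq 2\tau-d/2$ and $\mu_-(E,\Phi)\geq d/2$; on $U$ the first holds automatically for every $\tau\in[\tau_0,\tau_{max}]$, while the second is $\tau$-independent.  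Hence $U\cap X_{\tau-d/2}$ does not depend on $\tau$, and by $\G$-equivariant excision,
\begin{equation*}
H^\ast_\G(X_{\tau-d/2}, X'_{\tau-d/2})\cong H^\ast_\G(U\cap X_{\tau-d/2},\ (U\cap X_{\tau-d/2})\setminus \A_{ss})
\end{equation*}
is $\tau$-independent.  This gives the second statement of the lemma.

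For chamber invariance, note that since $\mu_+(E)\in\ZBbb$, the threshold condition $\mu_+(E)\leq 2\tau - d/2$ is constant in $\tau$ except when $2\tau - d/2$ crosses an integer, and such transitions occur only at elements of $C_d$ (cf.\ \eqref{eqn:cd}). The condition $\mu_-(E,\Phi)\geq d/2$ is $\tau$-independent. Therefore on each connected component of $(d/2,d)\setminus C_d$, the subsets $X_{\tau-d/2}, X'_{\tau-d/2}\subset \Bcal$ are literally constant, and the splitting of the long exact sequence \eqref{eqn:prime} is a chamber-invariant property.

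For propagation across $\tau_c\in C_d$ with $2\tau_c - d/2=k\in\ZBbb$, let $\tau_\pm$ lie in the two adjacent chambers.  As $\tau$ increases across $\tau_c$, the set $X_{\tau-d/2}$ gains the flip locus $F_c=\{(E,\Phi)\in\Bcal : \mu_+(E)=k,\ \mu_-(E,\Phi)\geq d/2\}$ (closed in $X_{\tau_+-d/2}$ and disjoint from $X_{\tau_--d/2}$ by upper semicontinuity of $\mu_+$), and since $\A_{ss}$ is unaffected, $X'_{\tau-d/2}$ gains the same $F_c$.  Combining the excision identification from the invariance step with Lemma \ref{lem:claim} (equivariant perfection at every stratum other than $\delta=\tau-d/2$), one obtains a commutative square of long exact sequences for $(X_{\tau_\pm-d/2}, X'_{\tau_\pm-d/2})$ in which the leftmost vertical arrow is an isomorphism and the attachments of $F_c$ on the $X$ and $X'$ sides are matched. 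A diagram chase then shows that the splitting at $\tau_+$ is equivalent to the splitting at $\tau_-$.  Iterating from $\tau_{max}$ through the finitely many $\tau_c\in C_d$ propagates the hypothesis to every $\tau\in(d/2,d)$.  The main technical obstacle is the rigorous matched Thom-type description of the attachment of $F_c$; this should amount to a rank-$2$ analog of Thaddeus's flip analysis in \cite{Thaddeus94}, carried out directly on the infinite-dimensional space using the critical set cohomology of Proposition \ref{prop:critical_set_cohomology}.
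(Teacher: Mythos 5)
Your proposal is correct and follows essentially the same route as the paper: constancy of the sets within the chambers of $(d/2,d)\setminus C_d$, identification of the single stratum gained at a wall crossing (your set-theoretic flip locus $\{\mu_+(E)=k,\ \mu_-(E,\Phi)\geq d/2\}$ coincides with the stratum $\B^{\tau_l}_{\delta^l}$ of the paper's \eqref{eqn:claim}), surjectivity of the restriction map on the $X'$ side via the perfection of that attachment (Lemma \ref{lem:claim}), and a downward induction from $\tau_{max}$ by a diagram chase. The only real difference is that you obtain the $\tau$-independence of $H^p_\G(X_{\tau-d/2},X'_{\tau-d/2})$ directly by excision, using the description $X_{\tau-d/2}=\{\mu_+(E)\leq 2\tau-d/2,\ \mu_-(E,\Phi)\geq d/2\}$ (equivalently, the inclusion of pairs across a wall is itself an excision of the closed subset $F_c\subset X'_{\tau_r-d/2}$), whereas the paper deduces this at the end of its diagram chase by showing the relative map $f$ is an isomorphism; your version is a mild but clean simplification of that step.
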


\begin{proof}
The sets $X'_{\tau-d/2}$, $X_{\tau-d/2}$ remain unchanged for $\tau$  in a  connected component of $(d/2,d)\setminus C_d$, 
where $C_d$ is given in \eqref{eqn:cd}.
Fix $\tau_c\in C_{d}$, $2\tau_c-d/2=k\in \ZBbb$, and let 
$\tau_l<\tau_c<\tau_r$ be in components $(d/2,d)\setminus C_{d}$ containing $\tau_c$ in their closures. 
Let $\delta^{l,r}=2\tau_c-d/2-\tau_{l,r}$.  Note that $\delta^{l,r}\in \Delta^-_{\tau_{l,r},d}$, $\delta^l>\tau_l-d/2$, and $\delta^r<\tau_r-d/2$. 
 Also,  we claim
\begin{equation} \label{eqn:claim}
X_{\tau_r-d/2}=X_{\tau_l-d/2}\cup \B_{\delta^l}^{\tau_l}\ ,\
X'_{\tau_r-d/2}=X'_{\tau_l-d/2}\cup \B_{\delta^l}^{\tau_l}
\end{equation}
To see this, we refer to Figure 1 and the discussion preceding it.  Under the map $\Delta_{\tau_l,d}\to \Delta_{\tau_r,d}$, $\delta^l\mapsto \delta^r$ and
$\tau_l-d/2\mapsto \tau_r-d/2$.  
 The claim then follows if we show that $\delta^r$ is the predecessor of $\tau_r-d/2$ in $\Delta_{\tau_r,d}$, and
$\delta^l$ is the successor of $\tau_l-d/2$ in $\Delta_{\tau_l,d}$ (see Figure 1).
So suppose $\delta\in \Delta_{\tau_r,d}$, $\delta<\tau_r-d/2$.  By Remark \ref{rem:gap}, we may assume $\delta\in \Delta^-_{\tau_r,d}$. Write
$\delta+\tau_r=\ell\in \ZBbb$.  Then $\ell\leq 2\tau_r-d/2$, which implies $\ell\leq k$, and $\delta\leq \delta_r$.  The reasoning is similar for
$\delta_l$.

Now since the result holds by assumption for $\tau_{max}$, we may assume by induction that the result holds for $\tau\geq \tau_r$.
Then we have 
\begin{equation}
\xymatrix{
0 \ar[r] & H_{\G}^p( X_{\tau_r-d/2},  X'_{\tau_r-d/2}) \ar[d]^{f} \ar[r] &
H_{\G}^p( X_{\tau_r-d/2}) \ar[r] \ar[d]^{g} & H_{\G}^p( X'_{\tau_r-d/2}) \ar[d]^{h} \ar[r] & 0\\
\cdots \ar[r] & H_{\G}^p(X_{\tau_l-d/2},  X'_{\tau_l-d/2})  \ar[r] &
 H_{\G}^p(X_{\tau_l-d/2})\ar[r]  & H_\G^p(X'_{\tau_l-d/2}) \ar[r] & \cdots
}
\end{equation}
By \eqref{eqn:claim} and the proof of Lemma \ref{lem:delta}, $h$ is surjective.  Hence, the lower long exact sequence must split.
Moreover, $g$ is surjective as well, and $\ker g=\ker h$.  As a consequence,  $f$ must be an isomorphism.
  The result now follows by induction.
\end{proof}

\begin{proof}[Proof of Theorems \ref{thm:perfect} and \ref{thm:kirwan}]
We proceed   by induction as follows.
  First, if $d>4g-4$, then by Lemma \ref{lem:claim}, 
the hypothesis of Lemma \ref{lem:induction} is satisfied.  
It then follows from Lemma \ref{lem:anytau} that the 
inclusion $X'_{\tau-d/2}\hookrightarrow  X_{\tau-d/2}$  induces a 
surjection in $\G$-equivariant rational cohomology for any $\tau$.   In 
particular, this is true for the value $\tilde\tau_{max}$ corresponding 
to degree $d-2$.  Hence, the inductive hypothesis holds, and 
the result is proven for all $d$.
Kirwan surjectivity follows immediately.
\end{proof}

\begin{proof}[Proof of Theorem \ref{thm:macdonald}]
This follows from Kirwan surjectivity, but more generally we prove this on each stratum.
Clearly it suffices to prove the result for $k=1$.
Since the gauge groups for $E$ and $\widetilde E$ are canonically isomorphic, 
it suffices by induction to show 
that if the result holds for the inclusion $X_\delta\hookrightarrow 
\widetilde X_\delta$, then it also holds for $X_{\delta_1}\hookrightarrow 
\widetilde X_{\delta_1}$, where $\delta_1$ is the predecessor of 
$\delta$ in $\Delta_{\tau,d}$.  By Theorem \ref{thm:perfect}, the 
diagram \eqref{eqn:embedding} holds for \emph{all} $\delta$.  
It follows that if $g$ is surjective, then so is $h$.   This completes the proof.
\end{proof}

\section{Cohomology of moduli spaces}

\subsection{Equivariant cohomology of $\tau$-semistable pairs}

The purpose of this section is to complete the calculation of 
the $\mathcal{G}$-equivariant Poincar\'e polynomial of $\mathcal{B}^\tau_{ss}$.
First we consider the case where $\tau$ is generic. Choose an integer $N$,
$d/2 < N \leq d$, and let
$ \tau \in  \left(\max\{d/2, N-1\}, N \right)$.
Then the different allowable values of $\delta$ for each type of stratum and the cohomology  are as follows (see Proposition \ref{prop:critical_set_cohomology}).

\begin{itemize}

\item[($\Ia$)] 
 There is one stratum 
$\Ia^{d/2}$ corresponding to $\mathcal{A}_{ss}$ (indexed by $j = d/2$), 
and by Lemma \ref{lem:anytau} the contribution $\Ia^{d/2}(t)$ to 
the Poincar\'e polynomial is independent of $\tau$.  For $d>4g-4$ it follows from
\eqref{eqn:minimal_ym} that
\begin{equation}\label{eqn:iad}
\Ia^{d/2}(t) = \frac{t^{2d+4-4g}}{(1-t^2)} P_t^{\overline\G}(\A_{ss}) 
\end{equation}
where $\overline \G$ is defined in \cite[p.\ 577]{AtiyahBott83}.
  We compute $\Ia^{d/2}(t)$ in general in Lemma \ref{lem:Ia} below.
 The remaining strata are indexed by integers $j = j(\delta) = \mu_+$ such that $d/2 < j \leq N-1$ and $\delta = j - d + \tau$.
The contribution to the $\mathcal{G}$-equivariant Poincar\'e polynomial  is 
\begin{align}
\begin{split}
\Ia^j(t) & = \frac{t^{2(2 j(\delta) - d + g - 1)}}{(1-t^2)^2} P_t \left( J_{j(\delta)}(M) \times J_{d-j(\delta)}  (M) \right) - \frac{t^{2j(\delta)}}{1-t^2} P_t \left( S^{j(\delta)} M \times J_{d-j(\delta)}(M) \right) \\
& \quad \quad \quad \quad -
 \frac{t^{2(2j(\delta)-d+g-1)}}{1-t^2} P_t \left( S^{d-j(\delta)} M \times J_{j(\delta)}(M) \right) 
\end{split}  \label{eqn:Ia-unstable}
\end{align}
\item[($\Ib$)] There are an infinite number of strata indexed by
 integers $j = j(\delta) = \mu_+$ such that $N \leq j$ and $\delta = j-d+\tau$.
The contribution  is 
\begin{align}
\begin{split}
\Ib^j(t) &= \frac{t^{2(2j(\delta) - d+g-1)}}{(1-t^2)^2} P_t \left( J_{j(\delta)}(M) \times J_{d-j(\delta)}(M) \right)\\
 &\qquad\qquad - \frac{t^{2(2j(\delta) - d+g-1)}}{1-t^2} P_t \left( S^{d-j(\delta)} M \times J_{j(\delta)} (M) \right) 
\end{split} \label{eqn:Ib}
\end{align}
\item[($\IIplus$)]  These strata are indexed by integers $j = j(\delta) = \deg \Phi = \deg L_1$ 
such that $d-N +1 \leq j \leq N-1$, and $\delta = j - d + \tau$. The contribution  is 
\begin{equation}\label{eqn:IIplus}
\IIplus_j(t) = \frac{t^{2 j(\delta)}}{1-t^2} P_t \left( S^{j(\delta)} M \times J_{d-j(\delta)} (M) \right) 
\end{equation}
\item[($\IIminus$)] These strata are indexed by integers $j=d-j(\delta)$ 
such that $0 \leq j \leq d-N$, where $\delta = j(\delta)-\tau = d-j-\tau$, and 
the contribution 
is
\begin{equation}\label{eqn:IIminus}
\IIminus_j(t) = \frac{t^{2(2j(\delta)-d+g-1)}}{1-t^2} P_t \left( S^{d-j(\delta)} M \times J_{j(\delta)} (M) \right) 
\end{equation}
\end{itemize}

Then we have
\begin{theorem} \label{thm:poincare-poly-generic}
For  $\tau \in  \left(\max\{d/2, N-1\}, N \right)$, 
$$P_t(\M_{\tau,d})=
P_t^\mathcal{G} (\mathcal{B}^\tau_{ss}) 
= P_t(B \mathcal{G}) - \Ia^{d/2}(t) - \sum_{j = \lfloor d/2 + 1 \rfloor}^{N-1} \Ia^j (t) - \sum_{j=N}^\infty \Ib^j(t) \\
 - \sum_{j=0}^{d-N} \IIminus_j(t) - \sum_{j = d-N+1}^{N-1} \IIplus_j(t)
 $$
\end{theorem}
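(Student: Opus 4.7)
The plan is to assemble the Poincar\'e polynomial from the pieces already developed in the paper, with the perfection result from Theorem \ref{thm:perfect} providing the essential organizing principle. The starting point is that $\B=\B(E)$ is $\G$-equivariantly contractible: scaling $\Phi\mapsto t\Phi$, $t\to 0$, is a $\G$-equivariant retraction onto $\A$, which is affine. Hence $P_t^\G(\B)=P_t(B\G)$.

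Next I would apply Theorem \ref{thm:perfect} to the modified Morse stratification $\{X_\delta\}_{\delta\in\Delta_{\tau,d}}$, using $X_0=\B^\tau_{ss}$ as the open stratum. Since every long exact sequence \eqref{eqn:prime} splits, telescoping over $\delta\in\Delta_{\tau,d}$ gives
\begin{equation*}
P_t(B\G)=P_t^\G(\B^\tau_{ss})+\sum_{0\neq \delta\in\Delta_{\tau,d}}P_t^\G(X_\delta,X_{\delta_1}).
\end{equation*}
Thus the theorem reduces to identifying the relative Poincar\'e polynomials $P_t^\G(X_\delta,X_{\delta_1})$ with the explicit contributions $\Ia^{d/2}(t)$, $\Ia^j(t)$, $\Ib^j(t)$, $\IIplus_j(t)$, $\IIminus_j(t)$, and then enumerating which indices $j=j(\delta)$ actually appear in the range $\tau\in(\max\{d/2,N-1\},N)$.

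For each type of critical set I would compute the contribution using Theorem \ref{thm:bott} combined with the explicit description of $\eta_\delta$ in Proposition \ref{prop:critical_set_cohomology}. For $\delta\in\Delta^-_{\tau,d}$ (type $\IIminus$), $\delta\in\Delta^+_{\tau,d}\cap(0,2\tau-d)$ with negative normal bundle supported on $\B_\delta$ alone (type $\IIplus$), and $\delta\in\Delta^+_{\tau,d}\cap[2\tau-d,\infty)$ with $\delta>\tau$ (type $\Ib$, constant rank case), Corollary \ref{cor:smooth} combined with \eqref{eqn:thom_eta} gives the Thom isomorphism, yielding the formulas \eqref{eqn:Ib}, \eqref{eqn:IIplus}, \eqref{eqn:IIminus} immediately. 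For the combined strata $\delta\in\Delta^+_{\tau,d}\cap(\tau-d/2,\tau]$ where $X_\delta\setminus X_{\delta_1}$ contains both the type $\Ia$ component $\A_{j(\delta)}$ and (when $\delta<2\tau-d$) the type $\IIplus$ component $\B_\delta$, I would apply Proposition \ref{prop:cohomology}: the split long exact sequence involving $(\nu_{I,\delta},\nu'_{I,\delta},\nu''_{I,\delta})$ is exactly what produces the three-term expression \eqref{eqn:Ia-unstable}, with the first term coming from $(\nu_{I,\delta},\nu''_{I,\delta})$ (the Atiyah-Bott contribution \eqref{eqn:nudoubleprime} twisted to account for $BU(1)\times BU(1)$) and the subtracted terms recording $(\nu'_{I,\delta},\nu''_{I,\delta})$ via \eqref{eqn:nuprime2}. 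The contribution $\Ia^{d/2}(t)$ from the minimal Yang-Mills stratum is handled by Lemma \ref{lem:anytau} (which asserts $\tau$-independence) together with the forthcoming Lemma \ref{lem:Ia} for low degree and \eqref{eqn:minimal_ym} for $d>4g-4$.

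Finally, the range of allowable $j$ is determined by translating the constraints from Section 2: type $\IIplus$ requires $d-\tau<j<\tau$, giving $d-N+1\leq j\leq N-1$; type $\IIminus$ requires $0\leq d-j\leq d-\tau$, giving $0\leq j\leq d-N$; type $\Ib$ requires $j\geq\tau$, giving $j\geq N$; and the unstable type $\Ia$ strata satisfy $d/2<j\leq N-1$, with $j=d/2$ labeling the semistable Yang-Mills component. The main potential obstacle is verifying that the telescoping is carried out without double-counting: the stratum $X_\delta\setminus X_{\delta_1}$ for $\delta\in\Delta^+_{\tau,d}\cap I_{\tau,d}$ combines two algebraic pieces, and one must check that the three-term formula \eqref{eqn:Ia-unstable} genuinely equals $P_t^\G(X_\delta,X_{\delta_1})$ rather than the sum of separate type $\Ia$ and type $\IIplus$ contributions, which would otherwise be added again in the $\IIplus$ sum. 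Tracing through Proposition \ref{prop:key} together with the fact that the $\IIplus$ sum in the theorem is indexed only by $j\in[d-N+1,N-1]$ (matching the range $\delta\in\Delta^+_{\tau,d}\cap(0,2\tau-d)$) confirms that the bookkeeping is consistent.
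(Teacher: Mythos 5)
Your proposal follows essentially the same route as the paper: invoke the perfection of the modified stratification (Theorem \ref{thm:perfect}) to telescope $P_t(B\G)$ into $P_t^\G(\B^\tau_{ss})$ plus the relative terms $P_t^\G(X_\delta,X_{\delta_1})$, evaluate the constant-rank strata by the Morse--Bott lemma and Thom isomorphism \eqref{eqn:thom_eta}, and handle $\delta\in\Delta^+_{\tau,d}\cap(\tau-d/2,\tau]$ via the relative decomposition of Proposition \ref{prop:key} and Proposition \ref{prop:cohomology}; your bookkeeping check that $\Ia^{j}(t)$ accounts only for the $\A_{j(\delta)}$ component, with $\IIplus_{j}(t)$ contributing the $\B_\delta$ piece separately, matches the paper's accounting. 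The only cosmetic difference is that you split $P_t^\G(X_\delta,X_{\delta_1})$ through the intermediate $X_\delta'$ rather than subtracting off $X_\delta''$ as the paper does, which yields the same terms.
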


\begin{proof}
By Theorem \ref{thm:perfect} we have
$$
P_t^\mathcal{G} (\mathcal{B}^\tau_{ss}) 
= P_t(B \mathcal{G}) - 
\sum_{\delta\in \Delta_{\tau,d}\setminus\{0\}} P_t^\G(X_\delta, X_{\delta_1})
$$
If $\delta\not\in \Delta_{\tau,d}^+\cap (\tau-d/2, \tau]$, then by the Morse-Bott lemma \eqref{eqn:bott1} and \eqref{eqn:thom_eta},
$$
P_t^\G(X_\delta, X_{\delta_1})
=\frac{t^{2\sigma(\delta)}}{1-t^2}P^\G_t(\eta_\delta)
$$
where $\sigma(\delta)$ is given in Corollary \ref{cor:smooth}.  If $\delta\in \Delta_{\tau,d}^+\cap
 (\tau-d/2, \tau]$ then by Section \ref{sec:large_degree},
$$
P_t^\G(X_\delta, X_{\delta_1})
= P_t^\G(X_\delta, X_{\delta}'')- P_t^\G(X_{\delta_1}, X_\delta'')
$$
The first term on the right hand side is given by \eqref{eqn:ab}.  For the second term, we have
$$
H^\ast_\G(X_{\delta_1}, X_\delta'')=\begin{cases}
H^\ast_\G(\nu_{I,\delta}', \nu_{I,\delta}'') & \delta\in \Delta^+_{\tau,d}\cap [2\tau-d,\tau] \\
H^\ast_\G(\omega_{\delta}, \nu_{I,\delta}'') & \delta\in \Delta^+_{\tau,d}\cap (\tau-d/2,2\tau-d) 
\end{cases}
$$
and the latter cohomology groups have been computed in \eqref{eqn:nuprime1} and 
\eqref{eqn:omegadoubleprime}.
This completes the computation.
\end{proof}

When the parameter $\tau$ is non-generic (i.e. $\tau = N$ for some integer $N \in \left[ d/2, d \right]$) then 
the same analysis as above applies, however now there are split solutions to the vortex equations. These correspond 
to one of the critical sets of type ${\bf II}$, where $E = L_1 \oplus L_2$ with $\phi \in H^0(L_1) \setminus \{ 0 \}$, and $\deg 
L_2 = \tau$. Therefore, the only difference the generic and non-generic case is that we do not count any contribution from 
the critical set of type $\IIminus$ with $j = d-N$. Therefore the Poincar\'e polynomial is
\begin{theorem}\label{thm:poincare-poly-non-generic}
For $\tau=N$,
$$P_t^\mathcal{G} (\mathcal{B}^N_{ss}) = P_t(B \mathcal{G}) - \Ia^{d/2}(t) - \sum_{j = \lfloor d/2 + 1 \rfloor}^{N-1} \Ia^j (t) 
- \sum_{j=N}^\infty \Ib^j(t) \\
 - \sum_{j=0}^{d-N-1} \IIminus_j(t) - \sum_{j = d-N+1}^{N-1} \IIplus_j(t) 
 $$
\end{theorem}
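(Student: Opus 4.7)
The plan is to adapt the proof of Theorem \ref{thm:poincare-poly-generic} to the case $\tau = N\in\ZBbb$, tracking which critical set(s) from the generic analysis become strictly semistable at the integer value. The key observation is the following. For $\tau\in(N-1,N)$ generic, the $\IIminus$ stratum with $j=d-N$ corresponds to pairs with $E=L_1\oplus L_2$, $\deg L_1=d-N$, $\Phi\in H^0(L_1)\setminus\{0\}$, and $\deg L_2=N$; its defect is $\delta=N-\tau>0$. As $\tau\to N$, this defect tends to $0$, and precisely at $\tau=N$ such pairs satisfy $\mu_+(E)=N=\tau=\mu_-(E,\Phi)$, so they become strictly $N$-semistable. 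Thus they are absorbed into $\Bcal^N_{ss}$, and the corresponding $\delta=d-j-\tau$ entry disappears from $\Delta_{N,d}\setminus\{0\}$. No other stratum undergoes this transition, since for other $\delta\in\Delta_{\tau,d}^-$ or $\delta\in\Delta_{\tau,d}^+$ with $\tau\in(N-1,N)$, the defect remains bounded away from $0$ as $\tau\to N$ (the other elements of $\Delta_{\tau,d}^-$ move left, while elements of $\Delta_{\tau,d}^+$ move right, and no additional element lands at $0$).

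Next I would verify that all the ingredients of Theorem \ref{thm:poincare-poly-generic} go through verbatim for nongeneric $\tau$. By Remark \ref{rem:nongeneric}, the $\tau$-Harder-Narasimhan stratification is defined for any $\tau$, and the combinatorial structure of $\Delta_{N,d}$ agrees with that of $\Delta_{\tau,d}$ for $\tau$ slightly greater than $N$ (where the ``merging'' stratum simply does not appear in $\Delta^-$). The equivariantly perfect modified Morse stratification (Theorem \ref{thm:perfect}) is established in Sections \ref{sec:bott}--\ref{sec:low_degree} using only the local structure of the critical sets, the deformation theory of Section \ref{sec:negative}, and the inductive twisting argument of Section \ref{sec:low_degree}; none of these steps use genericity of $\tau$. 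In particular, the Morse-Bott isomorphisms of Theorem \ref{thm:bott}, the Thom isomorphism \eqref{eqn:thom_eta}, and the cohomology descriptions of Proposition \ref{prop:critical_set_cohomology} for the remaining types $\Ia$, $\Ib$, $\IIplus$, $\IIminus$ are unaffected.

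With these facts in place, the computation is straightforward. Writing
\[
P_t^\G(\Bcal^N_{ss})=P_t(B\G)-\sum_{\delta\in\Delta_{N,d}\setminus\{0\}} P_t^\G(X_\delta,X_{\delta_1})
\]
from equivariant perfection, I evaluate each term exactly as in the proof of Theorem \ref{thm:poincare-poly-generic}. The contributions $\Ia^{d/2}(t)$, $\Ia^j(t)$, $\Ib^j(t)$, $\IIplus_j(t)$, and $\IIminus_j(t)$ are given by the same formulas \eqref{eqn:iad}--\eqref{eqn:IIminus}, since the underlying critical sets and their negative normal spaces are identical. The only change is that the upper summation limit for the $\IIminus$ terms becomes $d-N-1$ in place of $d-N$, reflecting the absorption of the $j=d-N$ stratum into the semistable locus. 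Collecting terms yields the stated formula.

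The main point requiring care is confirming that no double-counting occurs when the ``merged'' stratum joins $\Bcal^N_{ss}$: the equivariantly perfect stratification associated to $\tau=N$ genuinely has fewer positive-codimension strata than the $\tau\in(N-1,N)$ stratification, so the Poincar\'e polynomial identity is an equality of finite sums (together with the convergent tail of $\Ib^j$ terms) rather than a limit statement. This is guaranteed by Remark \ref{rem:nongeneric} together with the fact that, by Theorem \ref{thm:Hitchin-Kobayashi}, the split solutions to the $N$-vortex equations are honest elements of $\mathcal{M}_{N,d}$ and contribute via $H^*_\G(\Bcal^N_{ss})$ alone.
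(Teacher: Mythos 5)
Your argument is correct and follows essentially the same route as the paper: the only change at $\tau=N$ is that the critical set of type $\IIminus$ with $j=d-N$ consists of split solutions of the $N$-vortex equations, hence is absorbed into $\Bcal^N_{ss}$ and its contribution is dropped, while every other term of Theorem \ref{thm:poincare-poly-generic} carries over unchanged. One small inaccuracy: the stratification at $\tau=N$ coincides with the one for $\tau\in(N-1,N)$ with the $\IIminus_{d-N}$ stratum merged into the semistable locus (as your first paragraph correctly shows), not with the one for $\tau$ slightly greater than $N$, which acquires a new $\IIplus_{d-N}$ stratum and relabels the $j=N$ Yang--Mills stratum as type $\Ia$.
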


Finally, using Theorem \ref{thm:poincare-poly-generic},  we can give
 a computation of the remaining term which is as yet undetermined in low degree.
\begin{lemma} \label{lem:Ia}
For all $d\geq 2$, 
\begin{align*}
\Ia^{d/2}(t)&=
\frac{1}{1-t^2} P_t^{\overline\G}(\A_{ss})-\sum_{j=0}^{\lfloor d/2\rfloor}
\frac{t^{2j}-t^{2(d+g-1-2j)}}{1-t^2}P_t(S^jM\times J_{d-j}(M)) \\
&\qquad-
\begin{cases}0&\text{ if $d$ odd} \\ 
\displaystyle \frac{t^{2g-2}}{(1-t^2)}P_t(S^{d/2}M\times J_{d/2}(M)) &\text{ if $d$ even}
\end{cases}
\end{align*}
\end{lemma}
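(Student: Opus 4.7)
By Lemma \ref{lem:anytau}, $\Ia^{d/2}(t)$ is independent of $\tau$, so I evaluate it at $\tau = \tau_{\max} \in (d-1,d)$, where the moduli space admits a simple description. At this value of $\tau$, any $\tau$-stable pair has nowhere-vanishing $\Phi$, realizing $E$ as an extension $0\to\mathcal O\to E\to L\to 0$ with $L\in J_d(M)$. Since $d\geq 2$ implies $H^0(L^{-1})=0$ and $h^1(L^{-1})=d+g-1$ is constant, as used in the proof of Lemma \ref{lem:maxtau}, $\M_{\tau_{\max},d}$ is a projective bundle over $J_d(M)$ with fiber $\PBbb H^1(L^{-1})$ of complex dimension $d+g-2$, giving
\begin{equation*}
P_t(\M_{\tau_{\max},d}) \;=\; \frac{1-t^{2(d+g-1)}}{1-t^2}\,P_t(J_d(M)).
\end{equation*}

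Next, I apply Theorem \ref{thm:poincare-poly-generic} with $N=d$ and solve for $\Ia^{d/2}(t)$, substituting the explicit formulas \eqref{eqn:Ia-unstable}, \eqref{eqn:Ib}, \eqref{eqn:IIplus}, \eqref{eqn:IIminus}. The right-hand contributions split into three types: (A) Atiyah--Bott pieces $\frac{t^{2(2j-d+g-1)}}{(1-t^2)^2}P_t(J_j(M)\times J_{d-j}(M))$ from $\Ia^j$ and $\Ib^j$; (B) type-$\IIplus$ pieces $\frac{t^{2j}}{1-t^2}P_t(S^jM\times J_{d-j}(M))$; (C) type-$\IIminus$ pieces $\frac{t^{2(2j-d+g-1)}}{1-t^2}P_t(S^{d-j}M\times J_j(M))$. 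Summing (A) over all integers $j>d/2$ combines with $P_t(B\G)$ via the rank-two Atiyah--Bott formula for $\A_{ss}$ to produce $P_t^\G(\A_{ss})=\frac{1}{1-t^2}P_t^{\overline\G}(\A_{ss})$, the leading term of the claim. The (B) pieces from $-\sum_j\Ia^j$ (for $j=\lfloor d/2+1\rfloor,\ldots,d-1$) and from $-\sum_j\IIplus_j$ (for $j=1,\ldots,d-1$) partially cancel on the overlap, leaving $-\sum_{j=1}^{\lfloor d/2\rfloor}\frac{t^{2j}}{1-t^2}P_t(S^jM\times J_{d-j})$. After the reindexing $k=d-j$, the (C) pieces from $-\sum_j\Ia^j$ and $-\Ib^d$ assemble into $+\sum_{k=1}^{\lceil d/2\rceil-1}\frac{t^{2(d-2k+g-1)}}{1-t^2}P_t(S^kM\times J_{d-k})$, with the $k=0$ summand of $-\Ib^d$ exactly cancelling $-\IIminus_0$. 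Finally, the expansion $-P_t(\M_{\tau_{\max},d})=-\frac{1}{1-t^2}P_t(J_d)+\frac{t^{2(d+g-1)}}{1-t^2}P_t(J_d)$ supplies the missing $j=0$ pieces to both sums.

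Combining (B) and (C) yields the claimed formula. For $d$ odd, $\lceil d/2\rceil-1=\lfloor d/2\rfloor$, so both sums share the range $j=0,\ldots,\lfloor d/2\rfloor$ and assemble into $-\sum_{j=0}^{\lfloor d/2\rfloor}\frac{t^{2j}-t^{2(d+g-1-2j)}}{1-t^2}P_t(S^jM\times J_{d-j})$ with no additional correction. For $d$ even, (C) only reaches $k=\lfloor d/2\rfloor-1$, so the middle index $j=d/2$ occurs in (B) but not in (C); extending (C) to include $j=d/2$ for a uniform formula requires subtracting $\frac{t^{2(g-1)}}{1-t^2}P_t(S^{d/2}M\times J_{d/2})$, which is precisely the stated even-$d$ correction. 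The main difficulty is the combinatorial bookkeeping across the four stratum families, and in particular the asymmetric treatment of the middle index when $d$ is even.
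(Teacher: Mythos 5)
Your argument is correct and follows essentially the same route as the paper: evaluate at $N=d$ where $\M_{\tau_{\max},d}$ is a projective bundle over $J_d(M)$, substitute into Theorem \ref{thm:poincare-poly-generic}, observe the cancellation between $\IIminus_0$ and the second piece of $\Ib^d$, combine the remaining type-$\IIplus$ and type-$\IIminus$ terms with the symmetric-product reindexing $j\mapsto d-j$, and absorb $P_t(B\G)$ minus the infinite Atiyah–Bott sum into $\frac{1}{1-t^2}P_t^{\overline\G}(\A_{ss})$. The bookkeeping across the parity of $d$, including the even-$d$ correction at the middle index $j=d/2$, matches the paper's computation.
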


\begin{remark}
It can be verified directly  that for $d>4g-4$, the expression above 
agrees with \eqref{eqn:iad}.
 See the argument of Zagier in  \cite[pp.\ 336-7]{Thaddeus94}.
\end{remark}

\begin{proof}[Proof of Lemma \ref{lem:Ia}]
Take the special case $N=d$. 
  Then $\M_{\tau,d}$ is a projective bundle over $J_d(M)$, and so 
$$
P_t(\M_{\tau,d})=\frac{ 1-t^{2(d+g-1)}}{1-t^2}P_t(J_{d}(M))
$$
On the other hand, from Theorem 4.1 we have
$$
P_t(\M_{\tau,d})=P_t(B\G)-\Ia^{d/2}(t)-\sum_{j=\lfloor d/2+1\rfloor}^{d-1} \Ia^j(t)-
\sum_{j=d}^\infty \Ib^j(t) -\IIminus_0(t)-\sum_{j=1}^{d-1} \IIplus_j(t)
$$
Now notice that the term $\IIminus_0(t)$ is cancelled by the second term in $\Ib^d$.  We combine
the remaining terms in the sum of $\Ib^j$ with the sum of $\Ia^j$. 
We have 
\begin{align*}
P_t(\M_{\tau,d})&=P_t(B\G)-\Ia^{d/2}(t)-\sum_{j=\lfloor d/2+1\rfloor}^\infty
 \frac{t^{2(2j-d+g-1)}}{(1-t^2)^2}P_t(J_j(M)\times J_{d-j}(M)) \\
&\qquad +\sum_{j=\lfloor d/2+1\rfloor}^{d-1} 
\frac{t^{2j}}{(1-t^2)}P_t(S^jM\times J_{d-j}(M))\\
&\qquad +\sum_{j=\lfloor d/2+1\rfloor}^{d-1}\frac{t^{2(2j-d+g-1)}}{(1-t^2)}
P_t(S^{d-j}M\times J_{j}(M)) 
 \\
&\qquad  
-\sum_{j=1}^{d-1} \frac{t^{2j}}{(1-t^2)}P_t(S^jM\times J_{d-j}(M)) \\
&= P_t(B\G)-\Ia^{d/2}(t)-\sum_{j=\lfloor d/2+1\rfloor}^\infty
 \frac{t^{2(2j-d+g-1)}}{(1-t^2)^2}P_t(J_j(M)\times J_{d-j}(M)) \\
&\qquad +\sum_{j=\lfloor d/2+1\rfloor}^{d-1} 
\frac{t^{2(2j-d+g-1)}}{(1-t^2)} P_t(S^{d-j}M\times J_{j}(M)) \\
&\qquad-\sum_{j=1}^{\lfloor d/2\rfloor}\frac{t^{2j}}{(1-t^2)}P_t(S^jM\times J_{d-j}(M))
\end{align*}
Now make the substitution $j\mapsto d-j$ in the second to the last sum, using
$$
d-\lfloor d/2+1\rfloor=\begin{cases} d/2-1=\lfloor d/2\rfloor-1 &\text{ if $d$ even} \\
d/2-1/2=\lfloor d/2\rfloor &\text{ if $d$ odd}
\end{cases}
$$
The result now follows from this,  \cite[Thm.\ 7.14]{AtiyahBott83}, and the fact that 
$
P_t(\M_{\tau,d})
$
is equal to  the $j=0$ term in the sum.
\end{proof}

\subsection{Comparison with the results of Thaddeus}

In \cite{Thaddeus94}, Thaddeus computed the Poincar\'e polynomial of the moduli space using  different methods 
to those of this paper. The idea is to show that when the parameter $\tau$ passes a critical value, then the moduli space $\M_{\tau,d}$ undergoes a 
birational transformation consisting of a blow-down along a submanifold and a blow-up along a different submanifold (these transformations are 
known as ``flips''). By computing the change in Poincar\'e polynomial caused by the flips as the parameter crosses the critical values, 
and also observing that the moduli space is a projective space for one extreme value of $\tau$, Thaddeus computed the Poincar\'e polynomial 
of the moduli space for any value of the parameter.  In this section we recover this result from Theorem \ref{thm:poincare-poly-generic}.
In the Morse theory picture we see that the critical point structure 
changes: As $\tau$ increases past a critical value then a new critical set  appears, and the index may change at 
existing critical points.

\begin{theorem}\label{thm:flip-change}
Let $N\in \ZBbb$, $d/2<N\leq d-1$.
Then for 
$\tau \in \left( \max(d/2, N-1), N \right)$,
\begin{equation}\label{eqn:flip-change}
P_t(\M_{\tau+1, d}) - P_t(\M_{\tau, d}) = \frac{t^{4N-2d+2g-2} - t^{2d-2N}}{1-t^2} P_t \left( S^{d-N} M \times J_N(M) \right)
\end{equation}
As a consequence, the Poincar\'e polynomial of the moduli space has the form
\begin{equation}\label{eqn:overall-polynomial}
P_t(\M_{\tau,d})=
\frac{(1+t)^{2g}}{1-t^2}\text{\rm Coeff}_{x^N}\left(\frac{t^{2d+2g-2-4N}}{xt^4-1}-
\frac{t^{2N+2}}{x-t^2}\right)\left(\frac{(1+xt)^{2g}}{(1-x)(1-xt^2)}\right) 
\end{equation}
\end{theorem}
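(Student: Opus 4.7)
The plan is to establish \eqref{eqn:flip-change} by direct comparison using Theorem \ref{thm:poincare-poly-generic}, then derive \eqref{eqn:overall-polynomial} by iterating \eqref{eqn:flip-change} from a base case. For \eqref{eqn:flip-change}, I would write $P_t(\M_{\tau,d})$ using integer $N$ and $P_t(\M_{\tau+1,d})$ using integer $N+1$ via Theorem \ref{thm:poincare-poly-generic}, and subtract. Only five strata contribute differently as the parameter crosses: $\Ia^N$ enters while $\Ib^N$ and $\IIminus_{d-N}$ drop out, and $\IIplus_N$, $\IIplus_{d-N}$ enter. Thus
\[
P_t(\M_{\tau+1,d}) - P_t(\M_{\tau,d}) = \Ib^N(t) + \IIminus_{d-N}(t) - \Ia^N(t) - \IIplus_N(t) - \IIplus_{d-N}(t).
\]
Direct substitution from \eqref{eqn:Ia-unstable}, \eqref{eqn:Ib}, \eqref{eqn:IIplus}, \eqref{eqn:IIminus} shows that the $\frac{1}{(1-t^2)^2}P_t(J_N\times J_{d-N})$ pieces in $\Ia^N$ and $\Ib^N$ cancel, that $\Ib^N(t)-\Ia^N(t)$ reduces to $\frac{t^{2N}}{1-t^2}P_t(S^N M\times J_{d-N}(M))$ and hence cancels against $-\IIplus_N(t)$, and that the remainder $\IIminus_{d-N}(t)-\IIplus_{d-N}(t)$ is precisely the right-hand side of \eqref{eqn:flip-change}.

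For \eqref{eqn:overall-polynomial}, although \eqref{eqn:flip-change} requires $N\leq d-1$, the range $\tau\in(d-1,d)$ lies outside its scope and gives $\M_{\tau,d}$ as a projective bundle over $\Jac_d(M)$ of fibre dimension $d+g-1$ by \cite{Thaddeus94}, with Poincar\'e polynomial $\frac{(1-t^{2(d+g-1)})(1+t)^{2g}}{1-t^2}$. Iterating \eqref{eqn:flip-change} downward from this base case gives
\[
P_t(\M_{\tau,d})=\frac{(1-t^{2(d+g-1)})(1+t)^{2g}}{1-t^2}-\sum_{N'=N}^{d-1}\frac{t^{4N'-2d+2g-2}-t^{2d-2N'}}{1-t^2}P_t(S^{d-N'}M\times \Jac_{N'}(M)).
\]
Substituting Macdonald's identity $P_t(S^k M)=[x^k]\frac{(1+xt)^{2g}}{(1-x)(1-xt^2)}$ and summing the two finite geometric series in $t^{4N'}$ and $t^{-2N'}$ recasts the whole sum as a single coefficient extraction $[x^N]$ against $\frac{(1+xt)^{2g}}{(1-x)(1-xt^2)}$, from which \eqref{eqn:overall-polynomial} is obtained after absorbing the base contribution.

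The main obstacle is this last algebraic reorganisation: the finite geometric sums must be telescoped into the compact rational form $\frac{t^{2d+2g-2-4N}}{xt^4-1}-\frac{t^{2N+2}}{x-t^2}$, which requires a careful choice of expansion convention for these rational functions as formal power series in $x$ about the origin, together with the verification that the apparent dependence on the upper limit $d-1$ of the iteration disappears once the base contribution is correctly incorporated into the same generating function.
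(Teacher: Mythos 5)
Your proposal is correct and follows essentially the same route as the paper: the difference of the two instances of Theorem \ref{thm:poincare-poly-generic} isolates exactly the five terms $-\Ia^N+\Ib^N+\IIminus_{d-N}-\IIplus_{d-N}-\IIplus_N$, the cancellations you identify ($\Ib^N-\Ia^N-\IIplus_N=0$ and $\IIminus_{d-N}-\IIplus_{d-N}$ giving the stated right-hand side) are precisely the paper's computation, and the derivation of \eqref{eqn:overall-polynomial} by iterating the flip formula from the projective-bundle base case at $\tau\in(d-1,d)$ and invoking Macdonald's generating function is the Thaddeus--Zagier argument the paper cites.
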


\begin{remark}
Let $\M_{\tau,d}^0$ denote the moduli space where the bundle has fixed determinant (see \cite{Thaddeus94}). 
The analysis in this paper applies in this case as well.  In particular, one obtains
\begin{equation}
P_t(\M_{\tau+1, d}^0) - P_t(\M_{\tau, d}^0) = \frac{t^{4N-2d+2g-2} - t^{2d-2N}}{1-t^2} P_t \left( S^{d-N} M \right) 
\end{equation}
This exactly corresponds to Thaddeus' results for $P_t(\mathbb{P} W_j^+) - P_t(\mathbb{P} W_j^-)$  \cite[p.\ 21]{Thaddeus94}, where $j = d-N$.
\end{remark}

\begin{proof}[Proof of Theorem \ref{thm:flip-change}]
By Theorem \ref{thm:poincare-poly-generic},
\begin{equation}\label{eqn:polynomial-difference}
P_t(\M_{\tau+1, d}) - P_t(\M_{\tau, d}) = - \Ia^N(t) + \Ib^N(t) + \IIminus_{d-N}(t) - \IIplus_{d-N}(t) - \IIplus_N(t) 
\end{equation}
Substituting in the results of \eqref{eqn:Ia-unstable}, \eqref{eqn:Ib}, \eqref{eqn:IIminus}, and \eqref{eqn:IIplus} gives
\begin{align*}
P_t(\M_{\tau+1, d}) - P_t(\M_{\tau, d})
 & = -\frac{t^{2(2 N - d + g - 1)}}{(1-t^2)^2}
 P_t \left( J_{N}(M) \times J_{d-N}  (M) \right) + \frac{t^{2N}}{1-t^2} P_t \left( S^{N} M \times J_{d-N}(M) \right) \\
& \quad \quad \quad \quad + \frac{t^{2(2N-d+g-1)}}{1-t^2} P_t \left( S^{d-N} M \times J_{N}(M) \right) \\
&  \quad \quad + \frac{t^{2(2N - d+g-1)}}{(1-t^2)^2} P_t \left( J_{N}(M) \times J_{d-N}(M) \right) \\
& \quad \quad \quad \quad - \frac{t^{2(2N - d+g-1)}}{1-t^2} P_t \left( S^{d-N} M \times J_{N} (M) \right) \\
& \quad \quad + \frac{t^{2(d-2(d-N)+g-1)}}{1-t^2} P_t \left( S^{d-N} M \times J_{N} (M) \right) \\
& \quad \quad - \frac{t^{2 (d-N)}}{1-t^2} P_t \left( S^{d-N} M \times J_{N} (M) \right) - \frac{t^{2 N}}{1-t^2} P_t \left( S^{N} M \times J_{d-N} (M) \right) \\
& = \frac{1}{1-t^2} P_t \left( S^{d-N} M \times J_N(M) \right) \left( t^{4N-2d+2g-2} - t^{2d-2N} \right)
\end{align*}
as required.
Using the results of \cite{MacDonald62} on the cohomology of the symmetric product, and the fact that $P_t(J_N(M)) = (1+t)^{2g}$, we see that the same method as for the proof of \cite[(4.1)]{Thaddeus94} gives equation \eqref{eqn:overall-polynomial}.
\end{proof}

\begin{remark}
For $\tau$ as above,
Theorem \ref{thm:poincare-poly-non-generic} shows that the difference
\begin{equation*}
P_t^\mathcal{G} (\mathcal{B}^{N}_{ss}) - P_t(\M_{\tau,d}) = \IIminus_{d-N}(t) = \frac{t^{4N-2d+2g-2}}{1-t^2} P_t \left( S^{d-N} M \times J_N(M) \right)
\end{equation*}
comes from only one critical set; the type ${\bf II}$ critical 
set corresponding to a solution of the vortex equations when $\tau = N$. 
 The rest of the terms in \eqref{eqn:polynomial-difference}, corresponding to the difference
$$
P_t( \M_{\tau+1,d}) - P_t^\mathcal{G}(\mathcal{B}^{N}_{ss})  
= - \Ia^N(t) + \Ib^N(t) - \IIplus_{d-N}(t) - \IIplus_N(t) 
  = -\frac{t^{2N}}{1-t^2} P_t \left( S^N M \times J_{d-N}(M) \right)
$$
come from a number of changes that occur in the structure of 
the critical sets as $\tau$ increases past $N$: 
the term $-\IIplus_{d-N}(t)$ corresponds to the type ${\bf II}$ critical point that no 
longer is a solution to the vortex equations, the term $-\IIplus_N(t)$ corresponds to the new critical 
point of type $\IIplus$ that appears, and the term $-\Ia^N(t)+\Ib^N(t)$ corresponds to the critical point that changes 
type from $\Ib$ to $\Ia$.

Therefore we see that the changes in the critical set structure as $\tau$ crosses the 
critical value $N$ are localized to two regions of $\mathcal{B}$. The first corresponds to interchanging critical sets
of type $\IIminus$ and type $\IIplus$.  This is the phenomenon illustrated in Figure 1.
The second corresponds to critical sets  
of  type $\Ia$ and $\IIplus$
that merge to form a single component of type $\Ib$.
 The terms 
from the first change exactly correspond to those in \eqref{eqn:flip-change}, i.e.
\begin{align*}
\IIminus_{d-N}(t) - \IIplus_{d-N}(t) & = \frac{t^{4N-2d+2g-2} - t^{2d-2N}}{1-t^2} P_t \left( S^{d-N} M \times J_N(M) \right) \\
 & = P_t(\M_{\tau+1, d}) - P_t(\M_{\tau, d}) 
\end{align*}
and the terms from the second change cancel each other, i.e. 
$ \Ib^N(t) -\Ia^N(t) - \IIplus_N(t) = 0
$.
\end{remark}

$\hbox{}$
\bibliographystyle{plain}
\bibliography{ref}

\end{document}